\newcommand{\bbZ}{{\Bbb Z}}
\newcommand{\bbR}{{\Bbb R}}
\newcommand{\bbN}{{\Bbb N}}
\newcommand{\bbC}{{\Bbb C}}
\newcommand{\bbQ}{{\Bbb Q}}
\newcommand{\norm}[1]{\left\|#1\right\|}
\renewcommand{\cite}{\citeyear}
\newtheorem{lem}[theorem]{Lemma}
\newtheorem{cor}[theorem]{Corollary}
\newtheorem{prop}[theorem]{Proposition}
\begin{document}

\title{Exponents of operator self-similar random fields
\thanks{The first author was supported in part by the ARO grant W911NF-14-1-0475.
The second author was supported in part by the NSF grants DMS-1462156 and EAR-1344280 and the ARO grant W911NF-15-1-0562. The third author was supported in part by the NSA grant H98230-13-1-0220. The authors would like to thank Hans-Peter Scheffler for the proof of Lemma \ref{e:A^(-1)w_k->0}.}
\thanks{{\em AMS
Subject classification}. Primary: 60G18, 60G15.}
\thanks{{\em Keywords and phrases}: operator self-similar random fields, Gaussian random fields, operator scaling,
operator self-similarity, anisotropy.} }

\author{Gustavo Didier \\ Tulane University \and Mark M.\ Meerschaert \\ Michigan State University \and Vladas Pipiras \\ University of North Carolina}

\bibliographystyle{agsm}

\maketitle

\begin{abstract}
If $X(c^E t)$ and $c^H X(t)$ have the same finite-dimensional distributions for some linear operators $E$ and $H$, we say that the random vector field $X(t)$ is operator self-similar.  The exponents $E$ and $H$ are not unique in general, due to symmetry.  This paper characterizes the possible set of range exponents $H$ for a given domain exponent, and conversely, the set of domain exponents $E$ for a given range exponent.
\end{abstract}


\section{Introduction}
\label{s:intro}

A random vector is called {\it full} if its distribution is not supported on a lower dimensional hyperplane.  A random field $X = \{X(t)\}_{t \in \bbR^m}$ with values in $\bbR^n$ is called {\it proper} if $X(t)$ is full for all $t\neq 0$.  A linear operator $P$ on $\bbR^m$ is called a projection if $P^2=P$.  Any nontrivial projection $P\neq I$ maps $\bbR^m$ onto a lower dimensional subspace.  We say that a random vector field $X$ is {\it degenerate} if there exists a nontrivial projection $P$ such that $X(t)=X(Pt)$ for all $t\in\bbR^m$.  We say that $X$ is {\it stochastically continuous} if $X(t_n)\to X(t)$ in probability whenever $t_n\to t$.  A proper, nondegenerate, and stochastically continuous random vector field $X$ is called {\it operator self-similar} (o.s.s, or $(E,H)$-o.s.s.) if
\begin{equation}\label{e:o.s.s.}
\{X(c^E t)\}_{t \in \bbR^m} \simeq \{c^{H}X(t)\}_{t \in \bbR^m}\quad \text{for all }c> 0.
\end{equation}
In \eqref{e:o.s.s.}, $\simeq$ indicates equality of finite-dimensional distributions, $E \in M(m,\bbR)$ and $H \in M(n,\bbR)$, where $M(p,\bbR)$ represents the space of real-valued $p \times p$ matrices, and $c^{M} = \exp(M (\log c)) = \sum^{\infty}_{k=0} (M \log c)^k/ k!$ for a square matrix $M$. We will assume throughout this paper that the eigenvalues of $E$ and $H$ have (strictly) positive real parts. This ensures that $c^Et$ and $c^Hx$ tend to zero as $c\to 0$, and tend to infinity in norm as $c\to\infty$ for any $t,x\neq 0$, see Theorem 2.2.4 in Meerschaert and Scheffler \cite{meerschaert:scheffler:2001}.  Then it follows from stochastic continuity that $X(0)=0$ a.s. At the end of Section 2, we will discuss what happens if some eigenvalues of $H$ have zero real part.

Operator self-similar random (vector) fields are useful to model long-range dependent, spatial and spatio-temporal anisotropic data in hydrology, radiology, image processing, painting and texture analysis (see, for example, Harba et al.\ \cite{harba:jacquet:jennane:lousoot:benhamou:lespesailles:tourliere:1994}, Bonami and Estrade \cite{bonami:estrade:2003}, Ponson et al.\ \cite{ponson:bonamy:auradou:mourot:morel:bouchaud:guillot:hulin:2006}, Roux et al.\ \cite{roux:clausel:vedel:jaffard:abry:2013}).  For a stochastic process (with $m=n=1$), the relation \eqref{e:o.s.s.} is called self-similarity (see, for example, Embrechts and Maejima \cite{EmbrechtsMaejima}, Taqqu \cite{taqqu:2003}). Fractional Brownian motion is the canonical example of a univariate self-similar process, and there are well-established connections between self-similarity and the long-range dependence property of time series (see Samorodnitsky and Taqqu \cite{samorodnitsky:taqqu:1994}, Doukhan et al.\ \cite{doukhan:2003}, Pipiras and Taqqu \cite{pipiras:taqqu:2016}).

The theory of operator self-similar stochastic processes (namely, $m = 1$) was developed by Laha and Rohatgi \cite{laha:rohatgi:1981} and Hudson and Mason \cite{hudson:mason:1982}, see also Chapter 11 in Meerschaert and Scheffler \cite{meerschaert:scheffler:2001}. Operator fractional Brownian motion was studied by Didier and Pipiras \cite{didier:pipiras:2011,didier:pipiras:2012} (see also Robinson \cite{robinson:2008}, Kechagias and Pipiras \cite{kechagias:pipiras:2015:def,kechagias:pipiras:2016:ident} on the related subject of multivariate long range dependent time series). For scalar fields (with $n=1$), the analogues of fractional Brownian motion and fractional stable motion were studied in depth by Bierm\'e et al.\ \cite{bierme:meerschaert:scheffler:2007}, with related work and applications found in Benson et al.\ \cite{fractint}, Bonami and Estrade \cite{bonami:estrade:2003}, Bierm\'{e} and Lacaux \cite{bierme:lacaux:2009}, Bierm\'{e}, Benhamou and Richard \cite{bierme:benhamou:richard:2009}, Clausel and Vedel \cite{clausel:vedel:2011,clausel:vedel:2013}, Meerschaert et al.\ \cite{WRCR:WRCR20376}, and Dogan et al.\ \cite{GRL:GRL52237}.  Li and Xiao \cite{li:xiao:2011} proved important results on operator self-similar random vector fields, see Theorem \ref{t:Li&Xiao_theo2.2} below. Baek et al.\ \cite{baek:didier:pipiras:2014} derived integral representations for Gaussian o.s.s.\ random fields with stationary increments.

Domain exponents $E$ and range exponents $H$ satisfying \eqref{e:o.s.s.} are not unique in general, due to symmetry. More specifically, the set of domain or range exponents comprises more than one element if and only if the respective set of domain or range symmetries contains a vicinity of the identity. This paper describes the set of possible range exponents $H$ for a given domain exponent $E$, and conversely, the set of possible domain exponents $E$ for a given range exponent $H$.  In both cases, the difference between two exponents lies in the tangent space of the symmetries.  The corresponding result for o.s.s.\ stochastic processes, the case $m=1$, was established by Hudson and Mason \cite{hudson:mason:1982}. In the characterization of the sets of domain or range exponents, the key assumption is that of the existence of a range or a domain exponent, respectively. This allows us to make use of the framework laid out by Hudson and Mason \cite{hudson:mason:1982}, Li and Xiao \cite{li:xiao:2011} as well as that of Meerschaert and Scheffler \cite{meerschaert:scheffler:2001}, Chapter 5, the latter being more often used for establishing results for domain exponents. In addition, we provide a counterexample showing that the existence of one of the two exponents is a necessary condition for establishing the relation \eqref{e:o.s.s.}.

\section{Results}
\label{s:main}

This section contains the main results in the paper. All proofs can be found in Section \ref{s:proofs}.

The domain and range symmetries of $X$ are defined by
\begin{equation}\begin{split}\label{e:def_G1}
G^{\textnormal{dom}}_{1} :=& \{A \in M(m,\bbR):X(At)\simeq X(t)\},\\
G^{\textnormal{ran}}_1 :=& \{ B \in M(n,\bbR): BX(t) \simeq X(t)\}.
\end{split}\end{equation}

For the next proposition, let $GL(k,\bbR)$ be the general linear group on $\bbR^k$.
\begin{prop}\label{p:Gdom_Gran_are_compact_groups}
Let  $X = \{X(t)\}_{t \in \bbR^m}$ be a proper nondegenerate random field with values in $\bbR^n$ such that $X(0)=0$ a.s. Then, $G^{\textnormal{ran}}_{1}$ is a compact subgroup of $GL(n,\bbR)$, and  $G^{\textnormal{dom}}_{1}$ is a compact subgroup of $GL(m,\bbR)$.
\end{prop}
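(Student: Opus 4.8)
The plan is, in each case, to check the group axioms directly and then to obtain compactness by showing the set is bounded and closed in the ambient matrix space and that all its elements are invertible; properness (and, for the domain group, $X(0)=0$ a.s.\ together with stochastic continuity) does all the work.

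\emph{Group structure.} Closure under composition is immediate from the definitions: if $X(A_1t)\simeq X(t)$ and $X(A_2t)\simeq X(t)$ for all $t$, then $X(A_1A_2t)\simeq X(A_2t)\simeq X(t)$, and similarly $B_1B_2X(t)\simeq B_1X(t)\simeq X(t)$; the identity clearly belongs to both sets. Invertibility of the elements uses properness. If $B\in M(n,\bbR)$ is singular and $BX(t)\simeq X(t)$, then for any $t_0\neq 0$ the law of $BX(t_0)$ is supported on the proper subspace $B(\bbR^n)$, contradicting fullness of $X(t_0)$; hence $G^{\textnormal{ran}}_1\subseteq GL(n,\bbR)$, and applying $B^{-1}$ to $X(t)\simeq BX(t)$ shows $B^{-1}\in G^{\textnormal{ran}}_1$. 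If $A\in M(m,\bbR)$ is singular and $X(At)\simeq X(t)$, choose $t_0\neq 0$ with $At_0=0$; then $X(t_0)\simeq X(At_0)=X(0)=0$ a.s., again contradicting fullness of $X(t_0)$; hence $G^{\textnormal{dom}}_1\subseteq GL(m,\bbR)$, and the substitution $t\mapsto A^{-1}t$ shows it is closed under inversion.

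\emph{Compactness of the range group and closedness in general.} Fix $t_0\neq 0$ and let $\mu$ be the (full) law of $X(t_0)$; every $B\in G^{\textnormal{ran}}_1$ satisfies $B\mu=\mu$, so boundedness of $G^{\textnormal{ran}}_1$ is the classical statement that the linear symmetry group of a full probability measure on $\bbR^n$ is compact (Meerschaert and Scheffler \cite{meerschaert:scheffler:2001}, Ch.\ 2). Directly: if $B_k\in G^{\textnormal{ran}}_1$ with $\|B_k\|\to\infty$, pass to a subsequence with $B_k/\|B_k\|\to C\neq 0$, pick $v$ with $C^\top v\neq 0$, and note $\langle v,B_kX(t_0)\rangle=\|B_k\|\langle u_k,X(t_0)\rangle$ with $u_k:=\|B_k\|^{-1}B_k^\top v\to C^\top v\neq 0$; since $\langle C^\top v,X(t_0)\rangle$ is not a.s.\ zero, the left side cannot be tight, contradicting $\langle v,B_kX(t_0)\rangle\simeq\langle v,X(t_0)\rangle$. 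Closedness of $G^{\textnormal{ran}}_1$ is automatic: if $B_k\to B$ with $B_k\in G^{\textnormal{ran}}_1$, then $(B_kX(t_1),\dots,B_kX(t_p))\to(BX(t_1),\dots,BX(t_p))$ pointwise, hence in distribution, so the limit again has the finite-dimensional distributions of $X$; the same argument works for $G^{\textnormal{dom}}_1$, now using the stochastic continuity of $X$ to pass $X(A_kt_j)\to X(At_j)$ in probability from $A_k\to A$. A bounded closed subset of Euclidean matrix space is compact, which settles $G^{\textnormal{ran}}_1$ and reduces $G^{\textnormal{dom}}_1$ to its boundedness.

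\emph{The main obstacle: boundedness of $G^{\textnormal{dom}}_1$.} Here one cannot argue at a single point, since $X(At)\simeq X(t)$ does not a priori restrict the growth of the largest singular value of $A$. Instead, suppose $A_k\in G^{\textnormal{dom}}_1$ with $\|A_k\|\to\infty$ and take a unit vector $w_k$ along the smallest singular direction of $A_k^{-1}$, so that $\|A_k^{-1}w_k\|=\|A_k\|^{-1}\to 0$; after passing to a subsequence, $w_k\to v$ with $\|v\|=1$. Since $A_k^{-1}\in G^{\textnormal{dom}}_1$ we have $X(w_k)\simeq X(A_k^{-1}w_k)$, and by stochastic continuity together with $X(0)=0$ a.s.\ the right side converges in distribution to $0$ while the left side converges to $X(v)$; hence $X(v)=0$ a.s., contradicting properness since $v\neq 0$. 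The delicate point — extracting from $\|A_k\|\to\infty$ a suitable sequence $w_k$ with $A_k^{-1}w_k\to 0$, handled uniformly enough to survive the passage to the limit — is exactly the linear-algebraic fact isolated in Lemma~\ref{e:A^(-1)w_k->0}, and this is where the hypotheses $X(0)=0$ a.s.\ and the stochastic continuity of $X$ enter in an essential way (without them the conclusion fails, e.g.\ for a field of the form $W(\log\|t\|)$ with $W$ stationary).
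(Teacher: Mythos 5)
Your proof is correct and follows essentially the same route as the paper: verify the group axioms (with properness and $X(0)=0$ giving invertibility), get closedness from convergence in distribution (resp.\ in probability via stochastic continuity), and derive boundedness by contradiction from properness, using for the domain group exactly the singular-direction fact of Lemma~\ref{e:A^(-1)w_k->0}. The only cosmetic difference is your range-boundedness step, argued via non-tightness of $\langle v,B_kX(t_0)\rangle$ with $B_k/\|B_k\|\to C$, where the paper applies the same lemma to $A_k^{*}$; both are the same idea of contradicting fullness of a single marginal.
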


The definition \eqref{e:o.s.s.} is more general than it appears.  Given $E \in M(m,\bbR)$, a proper nondegenerate random field $X$ will be called \textit{$E$-range operator self-similar} if there exist invertible linear operators $B(c)\in M(n,\bbR)$ such that
\begin{equation}\label{e:r.o.s.s.}
\{X(c^E t)\}_{t \in \bbR^m} \simeq \{B(c)X(t)\}_{t \in \bbR^m}\quad \text{for all $c>0$}.
\end{equation}

\begin{theorem}\label{t:Li&Xiao_theo2.2}(Li and Xiao \cite{li:xiao:2011}, Theorem 2.2)
For any $E$-range operator self-similar random vector field $X$, there exists a linear operator $H \in M(n,\bbR)$
such that \eqref{e:o.s.s.} holds.
\end{theorem}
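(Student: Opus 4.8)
The plan is to fix, for each $c>0$, the set
$\mathcal{B}(c):=\{A\in GL(n,\bbR):\{X(c^{E}t)\}_{t\in\bbR^m}\simeq\{AX(t)\}_{t\in\bbR^m}\}$, which is nonempty by \eqref{e:r.o.s.s.}, show that $c\mapsto\mathcal{B}(c)$ admits a measurable selection that descends to a one-parameter subgroup of a suitable quotient Lie group, and lift the infinitesimal generator of that subgroup to a matrix $H$ with $c^{H}\in\mathcal{B}(c)$ for all $c$, which is exactly \eqref{e:o.s.s.}. First I would record the algebraic structure of the family. If $A_1,A_2\in\mathcal{B}(c)$ then $A_2^{-1}A_1X\simeq X$, so $A_2^{-1}A_1\in G^{\textnormal{ran}}_{1}$, and conversely $\mathcal{B}(c)$ is invariant under right multiplication by $G^{\textnormal{ran}}_{1}$; hence each $\mathcal{B}(c)$ is a single left coset of $G^{\textnormal{ran}}_{1}$. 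Composing the relations \eqref{e:r.o.s.s.} for $c$ and for $d$ and reparametrizing by the bijection $c^{E}$ of $\bbR^m$ gives $\mathcal{B}(c)\mathcal{B}(d)=\mathcal{B}(cd)$, so for any choices $B(c)B(d)$ and $B(cd)$ differ by an element of $G^{\textnormal{ran}}_{1}$. The same type of reparametrization argument shows $AgA^{-1}\in G^{\textnormal{ran}}_{1}$ whenever $g\in G^{\textnormal{ran}}_{1}$ and $A\in\mathcal{B}(c)$, so every element of $\bigcup_{c>0}\mathcal{B}(c)$ normalizes $G^{\textnormal{ran}}_{1}$.

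Next I would choose $c\mapsto B(c)$ Borel measurably. Stochastic continuity makes the characteristic functional $(\theta_1,t_1,\dots,\theta_k,t_k)\mapsto\bbE\exp\big(i\sum_{j}\langle\theta_j,X(t_j)\rangle\big)$ jointly continuous, so testing on a countable dense set of parameters the finite-dimensional equalities that define $\mathcal{B}(c)$ shows that $\{(c,A):A\in\mathcal{B}(c)\}$ is closed in $(0,\infty)\times GL(n,\bbR)$ with nonempty sections; the Kuratowski--Ryll-Nardzewski selection theorem then yields a Borel map $c\mapsto B(c)\in\mathcal{B}(c)$, which we normalize by $B(1)=I$. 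Let $\mathcal{G}$ be the closure in $GL(n,\bbR)$ of the subgroup generated by $G^{\textnormal{ran}}_{1}\cup\{B(c):c>0\}$. By the normalization property above and continuity of conjugation, $G^{\textnormal{ran}}_{1}$ — which is compact by Proposition \ref{p:Gdom_Gran_are_compact_groups} — is a closed normal subgroup of the Lie group $\mathcal{G}$, so $L:=\mathcal{G}/G^{\textnormal{ran}}_{1}$ is a Lie group with quotient epimorphism $\pi$. Since $B(c)B(d)$ and $B(cd)$ differ by an element of $\ker\pi=G^{\textnormal{ran}}_{1}$, the Borel map $\phi:=\pi\circ B:(0,\infty)\to L$ is a homomorphism, and a measurable homomorphism from $(0,\infty)\cong(\bbR,+)$ into a Lie group is automatically continuous; hence $\phi(c)=\exp_{L}\big((\log c)\eta\big)$ for some $\eta\in\mathrm{Lie}(L)$.

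To finish, I would use that the quotient map $\pi$ is a submersion, so $d\pi:\mathrm{Lie}(\mathcal{G})\to\mathrm{Lie}(L)$ is onto: pick $H\in\mathrm{Lie}(\mathcal{G})\subseteq M(n,\bbR)$ with $d\pi(H)=\eta$ and set $\psi(c):=\exp_{\mathcal{G}}\big((\log c)H\big)=c^{H}$ (matrix exponential). Then $\pi(\psi(c))=\exp_{L}\big((\log c)\eta\big)=\phi(c)=\pi(B(c))$, so $c^{H}=\psi(c)\in B(c)G^{\textnormal{ran}}_{1}=\mathcal{B}(c)$, which by the definition of $\mathcal{B}(c)$ says precisely that $\{X(c^{E}t)\}_{t}\simeq\{c^{H}X(t)\}_{t}$ for all $c>0$, i.e.\ \eqref{e:o.s.s.} holds. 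Finally, since the eigenvalues of $E$ have positive real parts we have $c^{E}t\to0$ as $c\to0$, hence $c^{H}X(t)\to0$ in probability for each $t$; as $X(t)$ is full for $t\neq0$, this forces $c^{H}\to0$, so the eigenvalues of $H$ also have positive real parts (Theorem 2.2.4 in Meerschaert and Scheffler \cite{meerschaert:scheffler:2001}), completing the verification that $X$ is $(E,H)$-o.s.s.

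I expect the main obstacle to be the passage from the measurably chosen $B(c)$ to a genuine one-parameter matrix group: $B$ is multiplicative only modulo the $G^{\textnormal{ran}}_{1}$-valued cocycle recorded above, and this cocycle cannot be removed by averaging inside $GL(n,\bbR)$ because $G^{\textnormal{ran}}_{1}$ need not be normal there. The resolution is the observation that $G^{\textnormal{ran}}_{1}$ \emph{is} normal in the smaller ambient group $\mathcal{G}$ generated by the symmetries together with the scalings; this makes $L$ a Lie group, turns $\phi$ into an honest one-parameter subgroup, and allows the lift of its generator to the sought matrix $H$. By comparison the measurable selection, while technical, is routine once stochastic continuity is brought in.
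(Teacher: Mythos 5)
Your construction does prove the existence of an exponent $H$, but it takes a genuinely different route from the one the paper relies on. The paper does not reprove this statement: it quotes Li and Xiao, whose argument is mirrored in the paper's own proof of the companion Theorem \ref{t:existence_of_domain_exponents}. There one shows that $G^{\textnormal{ran}}=\bigcup_{\lambda>0}G^{\textnormal{ran}}_\lambda$ is a closed subgroup of $GL(n,\bbR)$, that the assignment $A\mapsto\lambda$ is a well-defined homomorphism whose continuity is proved directly from properness and stochastic continuity (as in Lemma \ref{l:zeta_is_cont}), and that $G^{\textnormal{ran}}_1$ is not a neighborhood of $I$ in $G^{\textnormal{ran}}$ (as in Lemma \ref{l:G1_is_not_a_neighborhood_of_I}); since the exponential image of $T(G^{\textnormal{ran}})$ is a neighborhood of $I$, one finds $A\in T(G^{\textnormal{ran}})$ with $e^{A}\notin G^{\textnormal{ran}}_1$, the additive map $s\mapsto\log\xi(e^{sA})$ is linear with nonzero slope $\beta$, and $H=\beta^{-1}A$ is an exponent. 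You instead exploit the fact that $G^{\textnormal{ran}}_1$ is normalized by every element of ${\mathcal B}(c)$ --- an observation the paper never uses, and which is correct (for the two-sided normalizer condition note also $A^{-1}\in{\mathcal B}(1/c)$, and that the normalizer of a closed subgroup is closed, so the closure ${\mathcal G}$ still normalizes) --- then pass to the quotient Lie group, produce a one-parameter subgroup there via measurable selection plus automatic continuity of Borel homomorphisms, and lift its generator through the submersion. Both arguments are Lie-theoretic; yours trades the ``not a neighborhood of $I$'' lemma and the hands-on continuity proof for Kuratowski--Ryll-Nardzewski and Banach-type automatic continuity, which is heavier machinery but bypasses the quotient-free tangent-space bookkeeping. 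Two small points: compactness of $G^{\textnormal{ran}}_1$ is not needed (closed and normal suffices for the quotient), which is just as well since Proposition \ref{p:Gdom_Gran_are_compact_groups} is stated under $X(0)=0$ a.s., an assumption you do not have here, although the range half of its proof never uses it; and the selection could be avoided altogether by showing directly, as in Lemma \ref{l:zeta_is_cont}, that $c\mapsto\pi(B(c))$ is continuous.

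Your closing paragraph, however, overreaches: from $c^{E}t\to0$ you may only conclude $c^{H}X(t)\to X(t')$ or $X(0)$ in distribution, not $c^{H}X(t)\to0$ in probability, because $X(0)=0$ a.s.\ is not among the hypotheses of this theorem. In fact the eigenvalues of $H$ need not all have positive real parts for an $E$-range o.s.s.\ field: by Lemma \ref{l:admissibility_of_H} and Theorem \ref{t:X=X1+X2}, eigenvalues with zero real part occur precisely when $X$ has a full constant component, and positivity requires the additional assumption $X(0)=0$ a.s.\ (Proposition \ref{p:minRe(E)=0<=>minRe(H)=0}, part (iii)). Since the statement only asserts the existence of some $H\in M(n,\bbR)$ satisfying \eqref{e:o.s.s.}, this does not damage the core of your proof, but the final claim should be deleted or explicitly conditioned on $X(0)=0$ a.s.
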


Given $H \in M(n,\bbR)$, we say that a proper nondegenerate random vector field $X$ is \textit{$H$-domain operator self-similar} if there exists an invertible linear operator $A(c)\in M(m,\bbR)$ such that
\begin{equation}\label{e:d.o.s.s.}
\{X(A(c) t)\}_{t \in \bbR^m} \simeq \{c^{H}X(t)\}_{t \in \bbR^m}\quad \text{for all }c > 0.
\end{equation}

\begin{theorem}\label{t:existence_of_domain_exponents}
For any $H$-domain operator self-similar random vector field $X$, there exists a linear operator $E \in M(m,\bbR)$
such that \eqref{e:o.s.s.} holds.
\end{theorem}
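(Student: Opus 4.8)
The plan is to realize a valid family of domain normalizations as a one-parameter subgroup of the Lie group $(0,\infty)\times GL(m,\bbR)$, after accounting for the compact domain symmetry group. Write $K:=G^{\textnormal{dom}}_1$, which is a compact subgroup of $GL(m,\bbR)$ by Proposition~\ref{p:Gdom_Gran_are_compact_groups}, and set
\begin{equation*}
\mathcal{A}:=\bigl\{(c,g)\in(0,\infty)\times GL(m,\bbR):\{X(gt)\}_{t\in\bbR^m}\simeq\{c^{H}X(t)\}_{t\in\bbR^m}\bigr\}.
\end{equation*}
From $X(g_1g_2t)\simeq c_1^{H}X(g_2t)\simeq(c_1c_2)^{H}X(t)$, and from $X(g^{-1}t)\simeq c^{-H}X(t)$ after substituting $t\mapsto g^{-1}t$, one checks that $\mathcal{A}$ is a subgroup of $(0,\infty)\times GL(m,\bbR)$. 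The hypothesis that $X$ is $H$-domain o.s.s.\ says exactly that the projection $p\colon\mathcal{A}\to(0,\infty)$, $(c,g)\mapsto c$, is onto; its kernel is $\{1\}\times K$, and any measurable selection $c\mapsto A(c)$ of preimages obeys the cocycle relation $A(c_1)A(c_2)A(c_1c_2)^{-1}\in K$, which is the classical point of departure of Hudson and Mason~\cite{hudson:mason:1982} and Meerschaert and Scheffler~\cite{meerschaert:scheffler:2001}, Ch.~5. It therefore suffices to produce $E\in M(m,\bbR)$ with $(c,c^{E})\in\mathcal{A}$ for every $c>0$, since this is precisely~\eqref{e:o.s.s.}.

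First I would show that $\mathcal{A}$ is closed in the ambient Lie group. If $(c_n,g_n)\in\mathcal{A}$ converges to some $(c,g)$ with $c>0$ and $g\in GL(m,\bbR)$, then $g_nt\to gt$ for every $t$, so $X(g_nt)\to X(gt)$ in probability by stochastic continuity, hence jointly in distribution over finitely many indices; likewise $c_n^{H}\to c^{H}$ gives $c_n^{H}X(t)\to c^{H}X(t)$ in distribution, and passing to the limit in $\{X(g_nt_i)\}_i\simeq\{c_n^{H}X(t_i)\}_i$ shows $(c,g)\in\mathcal{A}$. By Cartan's closed-subgroup theorem $\mathcal{A}$ is then an embedded Lie subgroup of $(0,\infty)\times GL(m,\bbR)$, on whose Lie algebra $\mathfrak{a}$ the exponential map is the restriction of the ambient one. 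The differential $dp\colon\mathfrak{a}\to\bbR$ is onto: its image is a subalgebra of $\bbR$ and cannot be $\{0\}$, for otherwise $p$ would be constant on each coset of the identity component and hence factor through the countable group $\mathcal{A}/\mathcal{A}^{\circ}$, contradicting surjectivity onto the uncountable $(0,\infty)$; its kernel is $\mathrm{Lie}(\{1\}\times K)$. Choosing $\xi=(1,E)\in\mathfrak{a}$ with $dp(\xi)=1$, we obtain $\exp((\log c)\xi)=(e^{\log c},e^{(\log c)E})=(c,c^{E})\in\mathcal{A}$ for all $c>0$, that is $\{X(c^{E}t)\}\simeq\{c^{H}X(t)\}$. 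To have~\eqref{e:o.s.s.} in the paper's standing sense, it then remains to check that $E$ can be chosen with eigenvalues of strictly positive real part: otherwise $c^{-E}$ would fail to contract some nonzero vector to $0$ as $c\to\infty$, while inverting the relation above gives $X(c^{-E}t)\simeq c^{-H}X(t)\to 0$ in distribution, and properness and nondegeneracy of $X$ (via the asymptotics in Lemma~\ref{e:A^(-1)w_k->0}) rule this out, exactly as for the range exponent in Li and Xiao~\cite{li:xiao:2011}.

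I expect the main obstacle to be the step that turns $\mathcal{A}$ into a Lie group, i.e.\ the closedness/regularity input; in the classical language this is the construction of a \emph{measurable} selection $c\mapsto A(c)$ followed by the lifting of its cocycle $A(c_1)A(c_2)A(c_1c_2)^{-1}\in K$ across the compact group $K$ to a genuine one-parameter group $c\mapsto c^{E}$, and it is here that the compactness of $G^{\textnormal{dom}}_1$ and the quantitative control of $A(c)$ in Lemma~\ref{e:A^(-1)w_k->0} are essential. The remaining point, verifying the spectral condition on $E$, is more routine and mirrors the corresponding argument for range exponents.
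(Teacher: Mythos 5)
Your proof is correct, and it takes a genuinely different route from the paper's. The paper works inside $GL(m,\bbR)$: it forms the cosets $G_\lambda=C_\lambda G^{\textnormal{dom}}_1$, shows they are pairwise disjoint (Lemma \ref{l:Gr_inter_Gs=empty}, which uses properness together with the standing assumption that the eigenvalues of $H$ have positive real parts), assembles them into a group $G=\bigcup_{\lambda>0}G_\lambda$ that is closed in the relative topology and carries a continuous homomorphism $\zeta:G\to\bbR_+$ (Lemmas \ref{l:zeta_is_homomorph} and \ref{l:zeta_is_cont}), proves that $G^{\textnormal{dom}}_1$ is not a neighborhood of $I$ in $G$ (Lemma \ref{l:G1_is_not_a_neighborhood_of_I}), and then rescales a tangent vector $A\in T(G)$ with $e^A\notin G^{\textnormal{dom}}_1$ via the continuous additive homomorphism $s\mapsto\log\zeta(e^{sA})$. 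You instead pass to the graph $\mathcal{A}\subseteq(0,\infty)\times GL(m,\bbR)$, whose closedness needs nothing beyond stochastic continuity, invoke Cartan's closed-subgroup theorem, and get surjectivity of $dp$ from surjectivity of $p$ plus countability of the component group of a second countable Lie group; a preimage $(1,E)$ of $1$ then yields the one-parameter subgroup $(c,c^E)\in\mathcal{A}$, which is exactly \eqref{e:o.s.s.}. The graph device makes the labeling map tautologically well defined, so you never need the disjointness lemma, the ``not a neighborhood'' lemma, or even properness and the spectral condition on $H$ at the existence step; the price is heavier off-the-shelf Lie theory, whereas the paper's more hands-on construction of $G$, $\zeta$ and $T(G)$ is deliberately set up so it can be reused in the proof of Theorem \ref{t:domain_Exponents}. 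Two small caveats: your closing remark that the ``main obstacle'' is a measurable selection and a cocycle lifting misdescribes your own argument---once you work with the graph, no selection is needed and closedness is the easy step; and your sketch for arranging that the eigenvalues of $E$ have positive real parts is imprecise as stated (a negative-real-part eigenvalue requires the unit-sphere subsequence device of Lemma \ref{e:A^(-1)w_k->0} together with $X(0)=0$ a.s., and a purely imaginary eigenvalue the bounded-orbit argument), but this is not demanded by the statement itself and is precisely what Proposition \ref{p:minRe(E)=0<=>minRe(H)=0}(ii) supplies in the paper.
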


\begin{remark}\label{r:cyclo}
One could also consider a more general scaling relation $X(A(c)t) \simeq B(c)X(t)$, but this need not lead to an o.s.s.\ field even in the case $m=n=1$.  For example, let $b,c_0 > 1$ be constants such that $\alpha := \log c_0/\log b \in (0,1)$, and let $\phi(dy)$ be a discrete L\'{e}vy measure defined by $\phi(\{b^k\}) = c^{-k}_0$, $k \in \bbZ$. Now define a probability measure $\nu$ by means of its characteristic function $\exp \psi(\theta)$, where
$$
\psi(\theta) = \int_{\bbR} (e^{i \theta y}-1)\phi(dy) = \sum^{\infty}_{k = - \infty}(e^{i \theta b^k}-1)c^{-k}_0.
$$
Then, $\psi(b \theta) = c_0 \psi(\theta)$, and thus $\nu^{c_0} = b \nu = c^{1/\alpha}_0\nu$ (here, $b \nu(dx) := \nu(b^{-1}dx)$, so that if $\nu$ is the probability measure of a random variable $Y$, then $b \nu$ is the probability measure of the random variable $bY$). Then, $\nu$ is a strictly $(b,c_0)$ semistable distribution with $\alpha=\log c_0/\log b$. If $\{X(t)\}_{t \in \bbR}$ is a L\'evy process such that $X(1)$ has distribution $\nu$, it follows that $X(c_0t) \simeq c_0^{1/\alpha}X(t)$, i.e., $X$ is semi-self-similar (see Maejima and Sato \cite{Maejimasss}). Taking $A(c)=c_0$ and $B(c)=c_0^{1/\alpha}$ yields a process with the general scaling, but since the f.d.d.\ equality only holds for $c=c_0^k$, $k \in \bbN$, the process is not o.s.s.
\end{remark}

Given an o.s.s.\ random field $X$ with domain exponent $E$, the set of all possible range exponents $H$ in \eqref{e:o.s.s.} will be denoted by ${\mathcal E}^{\textnormal{ran}}_{E}(X)$. Given a range exponent $H$, we denote by ${\mathcal E}^{\textnormal{dom}}_{H}(X)$ the set of all possible domain exponents.  Given a closed group ${\mathcal G} \subseteq GL(m,\bbR)$, one can define its tangent space
\begin{equation}\label{e:T(G)}
T({\mathcal G}) = \Big\{A \in M(n,\bbR): A = \lim_{n \rightarrow \infty} \frac{G_n - I}{d_n}, \quad
\textnormal{for some } \{G_n\} \subseteq {\mathcal G} \textnormal{ and some }
0 \neq d_n \rightarrow 0 \Big\}.
\end{equation}
The next two theorems are the main results of this paper.

\begin{theorem}\label{t:E=H+TG_range}
Given an o.s.s.\ random vector field $X$ with domain exponent $E$, for any range exponent $H$ we have
\begin{equation}\label{e:E=H+TG_range}
{\mathcal E}^{\textnormal{ran}}_E (X) = H + T(G^{\textnormal{ran}}_1).
\end{equation}
Moreover, we can always choose an exponent $H_0 \in {\mathcal E}^{\textnormal{ran}}_E(X)$ such that
\begin{equation}\label{e:B0_A=AB_0_X_range}
H_0 A = A H_0 \quad \text{for every }A \in G^{\textnormal{ran}}_1.
\end{equation}
The nilpotent part of every $H \in {\mathcal E}^{\textnormal{ran}}_E(X)$ is the same, and it commutes with every element of the symmetry group $G^{\textnormal{ran}}_{1}$. Furthermore, every matrix $H \in {\mathcal E}^{\textnormal{ran}}_E(X)$ has the same real spectrum (real parts of the eigenvalues).
\end{theorem}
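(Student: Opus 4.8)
The plan is to follow the template of Hudson and Mason \cite{hudson:mason:1982}, adapted to the random field setting via the symmetry group structure from Proposition \ref{p:Gdom_Gran_are_compact_groups}. First I would establish the inclusion $H + T(G^{\textnormal{ran}}_1) \subseteq {\mathcal E}^{\textnormal{ran}}_E(X)$: given a range exponent $H$ and $A \in T(G^{\textnormal{ran}}_1)$, I want to show $H + A$ is again a range exponent. Since $G^{\textnormal{ran}}_1$ is a compact (hence Lie) subgroup of $GL(n,\bbR)$, its tangent space $T(G^{\textnormal{ran}}_1)$ is the Lie algebra, and for $A$ in this algebra the one-parameter group $\{e^{sA}\}_{s \in \bbR}$ lies in $G^{\textnormal{ran}}_1$; in particular $c^A = e^{A\log c} \in G^{\textnormal{ran}}_1$ for all $c>0$. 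Then for any $c>0$,
\begin{equation}\label{e:plan1}
\{X(c^E t)\} \simeq \{c^H X(t)\} \simeq \{c^H c^A X(t)\} = \{c^{H+A} X(t)\},
\end{equation}
where the middle equivalence uses $c^A \in G^{\textnormal{ran}}_1$ together with the fact that $c^H$ and $c^A$ commute — this commutation is the one slightly delicate point, and I would handle it by first proving the existence of the special exponent $H_0$ satisfying \eqref{e:B0_A=AB_0_X_range}, then writing a general $H$ in terms of $H_0$.

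For the reverse inclusion ${\mathcal E}^{\textnormal{ran}}_E(X) \subseteq H + T(G^{\textnormal{ran}}_1)$, suppose $H_1$ and $H_2$ are both range exponents for the same domain exponent $E$. Then $\{c^{H_1} X(t)\} \simeq \{X(c^E t)\} \simeq \{c^{H_2} X(t)\}$, so $c^{-H_2} c^{H_1} \in G^{\textnormal{ran}}_1$ for every $c>0$ (using invertibility of $c^{H_2}$ and the group property). Write $W(c) := c^{-H_2}c^{H_1}$; this is a curve in the compact group $G^{\textnormal{ran}}_1$ with $W(1)=I$. I would then show $W$ is differentiable at $c=1$ (or pass to a sequence $c_n \downarrow 1$ and extract the limit $(W(c_n)-I)/(\log c_n)$, which lands in $T(G^{\textnormal{ran}}_1)$ by definition \eqref{e:T(G)}), and compute the derivative: formally $\frac{d}{ds}\big|_{s=0} e^{-sH_2}e^{sH_1} = H_1 - H_2$, so $H_1 - H_2 \in T(G^{\textnormal{ran}}_1)$. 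Establishing the required regularity of $W(c)$ — that the f.d.d. identity forces enough smoothness to take this limit — is where I expect the main technical work to lie; the standard device is that a measurable one-parameter family of symmetries of a proper field is automatically continuous, and on a Lie group a continuous curve through the identity admits difference quotients converging into the Lie algebra.

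For the existence of $H_0$ with \eqref{e:B0_A=AB_0_X_range}: $G^{\textnormal{ran}}_1$ is compact, so it carries a Haar probability measure $\mu$; given any range exponent $H$, average its "group-theoretic part" over the group, i.e. replace $H$ by a conjugation-averaged version so that the resulting $H_0$ commutes with all of $G^{\textnormal{ran}}_1$ — concretely one shows $\int_{G^{\textnormal{ran}}_1} A\, H A^{-1}\, \mu(dA)$ differs from $H$ by an element of $T(G^{\textnormal{ran}}_1)$ (using that $T(G^{\textnormal{ran}}_1)$ is invariant under this averaging and contains the relevant correction terms), hence is still a range exponent, and by construction commutes with every $A \in G^{\textnormal{ran}}_1$. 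Finally, for the statements about the nilpotent part and the real spectrum: decompose each range exponent via its additive Jordan decomposition $H = S_H + N_H$ into commuting semisimple and nilpotent parts. Since any two range exponents differ by $A \in T(G^{\textnormal{ran}}_1)$ and $T(G^{\textnormal{ran}}_1)$ is the Lie algebra of a compact group, $A$ is semisimple with purely imaginary eigenvalues; a perturbation-of-Jordan-form argument (the nilpotent part is unchanged under adding a commuting semisimple element, and commutation follows from $H_0$ commuting with the group) then shows $N_H = N_{H_0}$ for all range exponents and that $N_{H_0}$ commutes with $G^{\textnormal{ran}}_1$, while adding a matrix with purely imaginary spectrum leaves the real parts of all eigenvalues fixed, giving the common real spectrum.
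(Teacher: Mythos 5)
Your reverse inclusion is fine, and in fact simpler than you fear: $W(c)=c^{-H_2}c^{H_1}$ is an explicitly smooth curve in the closed group $G^{\textnormal{ran}}_1$ with $W(1)=I$, so the difference quotients $(W(c_n)-I)/\log c_n$ converge to $H_1-H_2$ and land in $T(G^{\textnormal{ran}}_1)$ by definition \eqref{e:T(G)}; no automatic-continuity device is needed. The genuine gap is in the forward inclusion $H+T(G^{\textnormal{ran}}_1)\subseteq {\mathcal E}^{\textnormal{ran}}_E(X)$, where your plan is circular. You propose to prove it by first producing the commuting exponent $H_0$ and then writing $H=H_0+A$; but your construction of $H_0$ as the Haar average $\int A H A^{-1}\,\mu(dA)$ is justified by the claim that it ``differs from $H$ by an element of $T(G^{\textnormal{ran}}_1)$, hence is still a range exponent'' --- that last step \emph{is} the forward inclusion you have postponed. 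There is no independent route in your sketch: convexity of ${\mathcal E}^{\textnormal{ran}}_E(X)$ (needed to legitimize the averaging) is not obvious a priori, precisely because $c^{\lambda H_1+(1-\lambda)H_2}\neq c^{\lambda H_1}c^{(1-\lambda)H_2}$ when the exponents do not commute, and writing a convex combination as $H_2+\lambda(H_1-H_2)$ again invokes the forward inclusion.

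The paper breaks this circle by working with the larger group $G^{\textnormal{ran}}=\bigcup_{\lambda>0}G^{\textnormal{ran}}_\lambda$, where $G^{\textnormal{ran}}_\lambda=\{A: X(\lambda^E t)\simeq AX(t)\}$, which is closed in $GL(n,\bbR)$ and carries a continuous homomorphism $\xi$ recording the scale $\lambda$ (Li and Xiao). The induced map ${\mathcal L}(Q)=\log\xi(\exp Q)$ is linear on the vector space $T(G^{\textnormal{ran}})$, has kernel $T(G^{\textnormal{ran}}_1)$, and one shows ${\mathcal E}^{\textnormal{ran}}_E(X)=\{Q\in T(G^{\textnormal{ran}}):{\mathcal L}(Q)=1\}$: if ${\mathcal L}(Q)=1$ then $\exp(sQ)\in G^{\textnormal{ran}}$ and the additive homomorphism $s\mapsto\log\xi(e^{sQ})$ forces $\lambda^Q\in G^{\textnormal{ran}}_\lambda$ for all $\lambda$, so $Q$ is an exponent \emph{without} ever needing $c^{H+A}=c^Hc^A$. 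Both inclusions, plus closedness and convexity of the exponent set (which then legitimize your Haar average for $H_0$), drop out of this level-set description at once. Your treatment of the nilpotent part and real spectrum via $H=H_0+A$ with $[H_0,A]=0$ and $A$ semisimple with purely imaginary eigenvalues is essentially the argument of Meerschaert and Scheffler, Theorem 5.2.14, that the paper cites, but it too rests on the forward inclusion and the existence of $H_0$, so the missing ingredient above must be supplied first.
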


\begin{theorem}\label{t:domain_Exponents}
Given an o.s.s.\ random vector field $X$ with range exponent $H$, for any domain exponent $E$ we have
\begin{equation}\label{e:E(varphi)=B+T(S(varphi))}
{\mathcal E}^{\textnormal{dom}}_H(X) = E + T(G^{\textnormal{dom}}_1).
\end{equation}
Moreover, we can always choose an exponent $E_0 \in {\mathcal E}^{\textnormal{dom}}_H(X)$
such that
\begin{equation}\label{e:B0_A=AB_0_X}
E_0 B = B E_0, \quad \text{for all }B\in G^{\textnormal{dom}}_1.
\end{equation}
The nilpotent part of every $E \in {\mathcal E}^{\textnormal{dom}}_H(X)$ is the same, and it commutes with every element of the symmetry group $G^{\textnormal{dom}}_{1}$. Furthermore, every matrix $E \in {\mathcal E}^{\textnormal{dom}}_H(X)$ has the same real spectrum.
\end{theorem}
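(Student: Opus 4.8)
The plan is to encode the scaling relation as a Lie group and read the statement off its Lie algebra, in parallel with Theorem~\ref{t:E=H+TG_range} but on the domain side. Set
\[
\mathcal{U} := \{(A,c) \in GL(m,\bbR)\times(0,\infty) : \{X(At)\}_{t\in\bbR^m} \simeq \{c^H X(t)\}_{t\in\bbR^m}\}.
\]
Using $c_1^H c_2^H = (c_1c_2)^H$ and invertibility of $c^H$, one checks that $\mathcal{U}$ is a subgroup of $GL(m,\bbR)\times(0,\infty)$; stochastic continuity of $X$ (to pass to the limit $X(A_nt)\to X(At)$) together with $c_n^H\to c^H$ shows it is closed, so by Cartan's closed subgroup theorem it is an embedded Lie subgroup with Lie algebra $\mathfrak{u}\subseteq M(m,\bbR)\times\bbR$. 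A domain exponent $E$ for $H$ exists by hypothesis, so $p(A,c):=c$ maps $\mathcal{U}$ onto $(0,\infty)$; its kernel is $G^{\textnormal{dom}}_1\times\{1\}$, compact by Proposition~\ref{p:Gdom_Gran_are_compact_groups}, with Lie algebra $T(G^{\textnormal{dom}}_1)\times\{0\}$, and $p_*:\mathfrak{u}\to\bbR$ is onto with this same kernel. The crucial dictionary is that $E\in M(m,\bbR)$ is a domain exponent if and only if $(E,1)\in\mathfrak{u}$: if $E$ is a domain exponent then $s\mapsto(e^{sE},e^s)$ is a smooth curve in $\mathcal{U}$ through the identity with velocity $(E,1)$; conversely, since the exponential map of the embedded subgroup $\mathcal{U}$ is the restriction of the matrix exponential, $(E,1)\in\mathfrak{u}$ forces $(c^E,c)\in\mathcal{U}$ for all $c>0$.

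Given this, \eqref{e:E(varphi)=B+T(S(varphi))} is immediate: fixing a domain exponent $E$, so $(E,1)\in\mathfrak{u}$, and using that $\mathfrak{u}$ is a linear subspace containing $T(G^{\textnormal{dom}}_1)\times\{0\}$, we get $(E+A,1)\in\mathfrak{u}$, hence $E+A\in\mathcal{E}^{\textnormal{dom}}_H(X)$, for every $A\in T(G^{\textnormal{dom}}_1)$; conversely if $E'\in\mathcal{E}^{\textnormal{dom}}_H(X)$ then $(E'-E,0)\in\mathfrak{u}$ with $p_*(E'-E,0)=0$, so $E'-E\in T(G^{\textnormal{dom}}_1)$. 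For \eqref{e:B0_A=AB_0_X}, average $(E,1)$ over the compact subgroup $K:=\ker p\cong G^{\textnormal{dom}}_1$ against its normalized Haar measure via the adjoint action of $\mathcal{U}$: put $(E_0,1):=\int_K\mathrm{Ad}_k(E,1)\,dk$. This lies in $\mathfrak{u}$ (a finite-dimensional vector space, closed under such integrals); its $\bbR$-component is $1$ since $p$ is a homomorphism into the abelian group $(0,\infty)$, so $p_*\circ\mathrm{Ad}_k=p_*$; and $\mathrm{Ad}_{(B,1)}(E_0,1)=(BE_0B^{-1},1)$, so invariance of Haar measure yields $BE_0B^{-1}=E_0$ for all $B\in G^{\textnormal{dom}}_1$. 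Thus $E_0$ is a domain exponent commuting with $G^{\textnormal{dom}}_1$.

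For the Jordan-structure assertions, recall that a compact subgroup of $GL(m,\bbR)$ is conjugate into $O(m)$, so every $A\in T(G^{\textnormal{dom}}_1)$ is conjugate into $\mathfrak{so}(m)$, hence semisimple with purely imaginary eigenvalues. Write the real Jordan--Chevalley decomposition $E_0=S_0+N_0$ ($S_0$ semisimple, $N_0$ nilpotent, $[S_0,N_0]=0$); since $S_0$ and $N_0$ are polynomials in $E_0$ and $E_0$ commutes with $G^{\textnormal{dom}}_1$ --- and therefore, by passing to limits in \eqref{e:T(G)}, with all of $T(G^{\textnormal{dom}}_1)$ --- both $S_0$ and $N_0$ commute with $G^{\textnormal{dom}}_1$ and with every $A\in T(G^{\textnormal{dom}}_1)$. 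For an arbitrary $E=E_0+A\in\mathcal{E}^{\textnormal{dom}}_H(X)$ we then have $E=(S_0+A)+N_0$ with $S_0+A$ semisimple (a sum of commuting semisimple matrices), $N_0$ nilpotent, and $[S_0+A,N_0]=0$; by uniqueness this is the Jordan--Chevalley decomposition of $E$. Hence the nilpotent part of every domain exponent equals $N_0$, which commutes with $G^{\textnormal{dom}}_1$, and the eigenvalues of $E$ are those of $S_0$ shifted by the purely imaginary eigenvalues of $A$, so $\mathrm{Re}\,\mathrm{spec}(E)=\mathrm{Re}\,\mathrm{spec}(E_0)$ is common to all $E\in\mathcal{E}^{\textnormal{dom}}_H(X)$.

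The step I expect to demand the most care is setting up the Lie-theoretic framework correctly --- verifying closedness of $\mathcal{U}$, that $\exp_{\mathcal{U}}$ is the restriction of the matrix exponential, and above all that the tangent-space definition \eqref{e:T(G)} coincides with the Lie algebra of the compact group $G^{\textnormal{dom}}_1$ --- since everything else then reduces to linear algebra in $\mathfrak{u}$. A more computational alternative, in the spirit of Hudson--Mason \cite{hudson:mason:1982} and of Meerschaert--Scheffler \cite{meerschaert:scheffler:2001}, Chapter~5, is to work directly with the curves $c\mapsto c^{E_1}c^{-E_2}\in G^{\textnormal{dom}}_1$, obtain $E_1-E_2\in T(G^{\textnormal{dom}}_1)$ by differentiation at $c=1$, and then establish the reverse inclusion and the commuting exponent by an averaging argument relying on regular-variation estimates (where a lemma such as Lemma~\ref{e:A^(-1)w_k->0} would enter); both routes lead to the same conclusions.
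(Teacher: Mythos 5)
Your proposal is correct, and it reaches the theorem by a genuinely different technical route than the paper. The paper stays inside $GL(m,\bbR)$: it builds the cosets $G_\lambda$ and the group $G=\bigcup_{\lambda>0}G_\lambda$, shows the $G_\lambda$ are pairwise disjoint and that $\zeta(C)=\lambda$ is a well-defined continuous homomorphism (Lemmas \ref{l:Glambda_welldefined_1}--\ref{l:zeta_is_cont}, which is where properness and the positive real parts of the eigenvalues of $H$ enter), and then, following Meerschaert and Scheffler, pp.~137--138, introduces the linear functional $L(B)=\log\zeta(\exp(-B))$ on $T(G)$ and identifies ${\mathcal E}^{\textnormal{dom}}_H(X)=\{L=1\}$ and $T(G^{\textnormal{dom}}_1)=\{L=0\}$. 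You instead work in the graph group $\mathcal{U}\subseteq GL(m,\bbR)\times(0,\infty)$, where closedness follows at once from stochastic continuity and the scalar is carried as a coordinate, so the well-definedness and continuity lemmas for $\zeta$ are bypassed; Cartan's closed-subgroup theorem, the identity $\ker p_*=\mathrm{Lie}(\ker p)$, and the identification of the difference-quotient tangent space \eqref{e:T(G)} with the Lie algebra of a closed subgroup (the same fact the paper imports from Meerschaert--Scheffler, Proposition 2.2.10) then yield \eqref{e:E(varphi)=B+T(S(varphi))} by pure linear algebra in $\mathfrak{u}$. What the paper's route buys is an elementary, self-contained development whose lemmas are reused for Theorem \ref{t:existence_of_domain_exponents}; what yours buys is brevity and the observation that the spectral hypothesis on $H$ enters the first display only through the compactness of $G^{\textnormal{dom}}_1$ (Proposition \ref{p:Gdom_Gran_are_compact_groups}), rather than through a disjointness argument like Lemma \ref{l:Gr_inter_Gs=empty}. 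Your Haar-averaging step is the paper's argument in adjoint-action clothing, and for the final assertions (common nilpotent part commuting with $G^{\textnormal{dom}}_1$, common real spectrum) you spell out the proof --- conjugating the compact group into $O(m)$ so that tangent vectors are semisimple with purely imaginary eigenvalues, then invoking uniqueness of the Jordan--Chevalley decomposition --- which the paper delegates to Theorem 5.2.14 of Meerschaert and Scheffler; the substance is the same. The only points to make explicit in a final write-up are the routine verification that $\mathcal{U}$ is a subgroup and the standard equality of \eqref{e:T(G)} with the Lie algebra, both of which you correctly flag.
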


In the next example, and throughout the paper, $O(k)$ denotes the orthogonal group in $GL(k,\bbR)$.
\begin{example}
Let $X = \{X(t)\}_{t \in \bbR^2}$ be an $\bbR^2$-valued operator fractional Brownian field (OFBF), namely, a zero mean Gaussian, o.s.s., stationary increment random field with covariance function ${\Bbb E}X(s)X(t)^* = \Gamma(s,t)$, $s,t \in \bbR$. From the Gaussian assumption,
$$
G^{\textnormal{dom}}_1 = \{A \in M(2,\bbR): \Gamma(As,At) = \Gamma(s,t), \hspace{1mm}s, t \in \bbR\},
$$
$$
G^{\textnormal{ran}}_1 = \{B \in M(2,\bbR): B\Gamma(s,t)B^* = \Gamma(s,t),\hspace{1mm} s, t \in \bbR\}.
$$
In addition, assume that $X$ has a spectral density $f_X(x) = \|x\|^{-\gamma} I $, $x \in \bbR^2 \backslash \{0\}$, $2 < \gamma < 4$, where $\|\cdot\|$ denotes the Euclidean norm and $I$ is the identity matrix. This means that its covariance function can be written as
\begin{equation}\label{e:OFBF}
\Gamma(s,t) = I \int_{\bbR^2} (e^{i \langle s,x\rangle}-1)(e^{-i \langle t,x\rangle}-1)\frac{1}{\|x\|^{\gamma}}\hspace{1mm} dx,
\end{equation}
where $\langle \cdot, \cdot \rangle$ is the Euclidean inner product. By \eqref{e:OFBF} and a change of variables, $X$ is $(E,H)$-o.s.s.\ with $E = I$, $H = h I$, where $h = (\gamma-2)/2$. It is clear that $H$ and $E$ are commuting exponents (see \eqref{e:B0_A=AB_0_X_range} and \eqref{e:B0_A=AB_0_X}). Since $\Gamma(s,t)$ is a scalar matrix for $s,t \in \bbR^2$, then the condition
\begin{equation}\label{e:AGamma(s,t)A*=Gamma(s,t)}
A\Gamma(s,t)A^* = \Gamma(s,t)
\end{equation}
for $A \in GL(2,\bbR)$ implies that $AA^* = I$, namely, $A \in O(2)$. Moreover, any $A \in O(2)$ satisfies \eqref{e:AGamma(s,t)A*=Gamma(s,t)}. Hence, $G^{\textnormal{ran}}_1 = O(2)$. Now note that, by a change of variables in \eqref{e:OFBF} and the continuity of the spectral density except at zero, $A \in G^{\textnormal{dom}}_1 \Leftrightarrow \|A^* x\| = \|x\|$, $x \in \bbR^{m}\backslash\{0\}$, i.e., $A \in O(2)$. As a consequence, $G^{\textnormal{dom}}_1 = O(2)$. Therefore, from \eqref{e:E=H+TG_range} and \eqref{e:E(varphi)=B+T(S(varphi))},
$$
{\mathcal E}^{\textnormal{ran}}_I (X) = h I + so(2) , \quad {\mathcal E}^{\textnormal{dom}}_{hI}(X) = I + so(2),
$$
where $so(2) = T(O(2)) \subseteq M(2,\bbR)$ is the space of $2 \times 2$ skew-symmetric matrices.
\end{example}

\begin{cor}\label{c:relation_between_domain_and_range_exponents} Given an o.s.s.\ random vector field $X$:
\begin{enumerate}
\item [($a$)] If $E_1$, $E_2$ are two domain exponents for $X$, then for any $H_1\in {\mathcal E}^{\textnormal{ran}}_{E_1}(X)$ and $H_2\in {\mathcal E}^{\textnormal{ran}}_{E_2}(X)$ we have
\begin{equation}\label{e:relation_between_range_exponents}
{\mathcal E}^{\textnormal{ran}}_{E_1}(X) - H_{1} = {\mathcal E}^{\textnormal{ran}}_{E_2}(X) - H_{2}.
\end{equation}
\item [($b$)] If $H_1$, $H_2$ are two range exponents for $X$, then for any $E_1\in {\mathcal E}^{\textnormal{dom}}_{H_1}(X)$ and $E_2\in {\mathcal E}^{\textnormal{dom}}_{H_2}(X)$ we have
\begin{equation}\label{e:relation_between_domain_exponents}
{\mathcal E}^{\textnormal{dom}}_{H_1}(X) - E_{1} = {\mathcal E}^{\textnormal{dom}}_{H_2}(X) - E_{2}.
\end{equation}
\end{enumerate}
\end{cor}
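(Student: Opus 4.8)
The plan is to deduce Corollary~\ref{c:relation_between_domain_and_range_exponents} directly from Theorems~\ref{t:E=H+TG_range} and \ref{t:domain_Exponents}. The key observation is that the symmetry groups $G^{\textnormal{ran}}_1$ and $G^{\textnormal{dom}}_1$ in \eqref{e:def_G1} depend only on the field $X$ itself, not on any particular choice of exponent. Consequently the tangent spaces $T(G^{\textnormal{ran}}_1)$ and $T(G^{\textnormal{dom}}_1)$ appearing on the right-hand sides of \eqref{e:E=H+TG_range} and \eqref{e:E(varphi)=B+T(S(varphi))} are fixed linear subspaces attached to $X$.

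For part~($a$), suppose $E_1$ and $E_2$ are both domain exponents of $X$. Applying Theorem~\ref{t:E=H+TG_range} once with $E = E_1$ and once with $E = E_2$, and using any $H_i \in {\mathcal E}^{\textnormal{ran}}_{E_i}(X)$, we get
\begin{equation*}
{\mathcal E}^{\textnormal{ran}}_{E_1}(X) = H_1 + T(G^{\textnormal{ran}}_1), \qquad
{\mathcal E}^{\textnormal{ran}}_{E_2}(X) = H_2 + T(G^{\textnormal{ran}}_1).
\end{equation*}
Subtracting $H_1$ from the first identity and $H_2$ from the second yields
${\mathcal E}^{\textnormal{ran}}_{E_1}(X) - H_1 = T(G^{\textnormal{ran}}_1) = {\mathcal E}^{\textnormal{ran}}_{E_2}(X) - H_2$, which is \eqref{e:relation_between_range_exponents}. (Here ``${\mathcal S} - H$'' denotes the set $\{S - H : S \in {\mathcal S}\}$; since $T(G^{\textnormal{ran}}_1)$ is a linear subspace, translating back and forth is harmless.) Part~($b$) is identical with the roles of domain and range interchanged: apply Theorem~\ref{t:domain_Exponents} with $H = H_1$ and $H = H_2$ to obtain ${\mathcal E}^{\textnormal{dom}}_{H_i}(X) = E_i + T(G^{\textnormal{dom}}_1)$, then subtract the respective $E_i$ to conclude \eqref{e:relation_between_domain_exponents}.

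There is essentially no obstacle here; the corollary is a formal consequence of the two main theorems once one notices that the defining relations \eqref{e:def_G1} of the symmetry groups do not reference an exponent, so the tangent-space summand is the same regardless of which admissible exponent is used as the base point. The only point requiring a line of care is the implicit claim that ${\mathcal E}^{\textnormal{ran}}_{E_i}(X)$ is nonempty — i.e.\ that a range exponent $H_i$ actually exists for each domain exponent $E_i$ — but this is guaranteed by Theorem~\ref{t:Li&Xiao_theo2.2}, since an o.s.s.\ field with domain exponent $E_i$ is in particular $E_i$-range operator self-similar via \eqref{e:r.o.s.s.} with $B(c) = c^{H}$ for any existing range exponent $H$; dually, the existence of $E_i \in {\mathcal E}^{\textnormal{dom}}_{H_i}(X)$ in part~($b$) follows from Theorem~\ref{t:existence_of_domain_exponents}.
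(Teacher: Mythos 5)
Your proposal is correct and follows essentially the same route as the paper, which deduces \eqref{e:relation_between_range_exponents} directly from \eqref{e:E=H+TG_range} and \eqref{e:relation_between_domain_exponents} from \eqref{e:E(varphi)=B+T(S(varphi))}; your extra remarks on the exponent-independence of $T(G^{\textnormal{ran}}_1)$, $T(G^{\textnormal{dom}}_1)$ and on nonemptiness via Theorems \ref{t:Li&Xiao_theo2.2} and \ref{t:existence_of_domain_exponents} simply make explicit what the paper leaves implicit.
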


Hudson and Mason \cite{hudson:mason:1982} also considered o.s.s.\ stochastic processes for which the eigenvalues of the range exponent $H$ can have zero real parts.  In this case, the process can be decomposed into two component processes of lower dimension. One is associated with the eigenvalues of $H$ with null real parts, and the resulting random field has constant sample paths; the other has a range exponent whose eigenvalues all have positive real parts, and equals zero at $t=0$ a.s.  Next we show that the same is true for random fields.  Hence the condition assumed throughout the rest of this paper, that every eigenvalue of $H$ has positive real part, entails no significant loss of generality.

\begin{theorem}\label{t:X=X1+X2}
Let $X$ be a proper, stochastically continuous random vector field that satisfies the scaling relation \eqref{e:o.s.s.} for some $E$ whose eigenvalues all have positive real part.  Then, there exists a direct sum decomposition  $\bbR^n = V_1 \oplus V_2$ into $H$-invariant subspaces such that, writing $X=X_1+X_2$ and $H=H_1\oplus H_2$ with respect to this decomposition:
\begin{itemize}
\item [(i)] $X_1$ has constant sample paths; and
\item [(ii)] $X_2$ is $(E,H_2)$-o.s.s.\ with $X_2(0) = 0$ a.s.
\end{itemize}
\end{theorem}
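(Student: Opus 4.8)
The plan is to take $V_1$ and $V_2$ to be the sums of the generalized eigenspaces of $H$ associated, respectively, with eigenvalues of zero real part and of positive real part; this first requires ruling out eigenvalues of $H$ with negative real part. Let $V_-$ be the sum of the generalized eigenspaces for eigenvalues of negative real part and $P_-$ the projection onto $V_-$ along the complementary $H$-invariant subspace, so $P_-c^H=c^HP_-$. Applying $P_-$ to \eqref{e:o.s.s.} gives $\{P_-X(c^Et)\}_t\simeq\{c^HP_-X(t)\}_t$. Since the eigenvalues of $-H|_{V_-}$ have positive real part, Theorem 2.2.4 in Meerschaert and Scheffler \cite{meerschaert:scheffler:2001} shows that $\|c^HP_-X(t)\|\to\infty$ as $c\to 0$ on the event $\{P_-X(t)\neq 0\}$. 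On the other hand, the eigenvalues of $E$ have positive real part, so $c^Et\to 0$ as $c\to 0$, and stochastic continuity makes $c\mapsto\textnormal{Law}(P_-X(c^Et))$ extend to a continuous map on $[0,1]$, whose image is a compact, hence tight, set of probability measures. Since this family of laws equals $\{\textnormal{Law}(c^HP_-X(t)):c\in(0,1]\}$, tightness rules out the blow-up above unless $P_-X(t)=0$ a.s., and then fullness of $X(t)$ at a fixed $t\neq 0$ forces $P_-=0$. Hence $\bbR^n=V_1\oplus V_2$ with both summands $H$-invariant; write $H=H_1\oplus H_2$ and let $P_1,P_2$ be the corresponding projections, which commute with $c^H$ and satisfy $c^H|_{V_i}=c^{H_i}$.

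Next I would set $X_i:=P_iX$, so $X=X_1+X_2$, and apply $P_i$ to \eqref{e:o.s.s.} to obtain the $V_i$-valued relation
\begin{equation}\label{e:aux-comp-scaling}
\{X_i(c^Et)\}_{t\in\bbR^m}\simeq\{c^{H_i}X_i(t)\}_{t\in\bbR^m}\quad\text{for all }c>0.
\end{equation}
Taking $t=0$ in \eqref{e:aux-comp-scaling} with $i=2$ gives $X_2(0)\simeq c^{H_2}X_2(0)$, and since the eigenvalues of $H_2$ have positive real part, $c^{H_2}X_2(0)\to 0$ a.s.\ as $c\to 0$; hence $X_2(0)=0$ a.s. The field $X_2$ is stochastically continuous as a continuous linear image of $X$, and it is proper in $V_2$: if $P_2X(t)$ were supported on a hyperplane $\{y\in V_2:\langle v,y\rangle=a\}$ with $0\neq v\in V_2$, then $\langle P_2^*v,X(t)\rangle=a$ a.s.\ with $P_2^*v\neq 0$ (otherwise $v$ is orthogonal to $V_2\ni v$, forcing $v=0$), contradicting the fullness of $X(t)$ for $t\neq 0$.

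The core step is part (i). Put $Y(t):=X_1(t)-X_1(0)$; taking the two-point ($s_1=t$, $s_2=0$) marginal of \eqref{e:aux-comp-scaling} with $i=1$ and subtracting the two components yields $Y(c^Et)\simeq c^{H_1}Y(t)$ for all $c>0$. As $c\to 0$ we have $c^Et\to 0$, so $Y(c^Et)=X_1(c^Et)-X_1(0)\to 0$ in probability by stochastic continuity, whence $c^{H_1}Y(t)\to 0$ in probability. I would then invoke the Jordan decomposition $c^{H_1}=c^{S_1}c^{N_1}$, with $S_1$ semisimple of purely imaginary spectrum, $N_1$ nilpotent, and $S_1N_1=N_1S_1$: the group $\{c^{S_1}:c>0\}$ is conjugate to a subgroup of the orthogonal group, so $c_0\|w\|\le\|c^{S_1}w\|\le c_0^{-1}\|w\|$ for some $c_0\in(0,1]$ and all $c>0$, $w\in V_1$. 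If $N_1\neq 0$, then on the event $\{N_1Y(t)\neq 0\}$ the norm of $c^{N_1}Y(t)$ grows like $|\log c|^{j}$ as $c\to 0$, with $j\ge 1$ the largest index such that $N_1^jY(t)\neq 0$; therefore $\|c^{H_1}Y(t)\|\to\infty$ on that event, contradicting convergence in probability unless the event is null. So $Y(t)\in\ker N_1$ a.s., where $c^{H_1}=c^{S_1}$, and then $\|Y(t)\|\le c_0^{-1}\|c^{H_1}Y(t)\|\to 0$ in probability, forcing $Y(t)=0$ a.s. Thus $X_1(t)=X_1(0)$ a.s.\ for every $t$, and stochastic continuity along a countable dense set produces a modification of $X_1$ with constant sample paths.

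Finally, $X_2$ is nondegenerate: since $X_1$ is a.s.\ constant, $X(t)=X_1(0)+X_2(t)$, so any nontrivial projection $Q$ on $\bbR^m$ with $X_2(t)=X_2(Qt)$ for all $t$ would satisfy $X(t)=X(Qt)$ for all $t$, contradicting the nondegeneracy of $X$ (which holds since $X$ is operator self-similar). Together with \eqref{e:aux-comp-scaling}, the properness and stochastic continuity established above, and the positivity of the real parts of the eigenvalues of $E$ and $H_2$, this shows $X_2$ is $(E,H_2)$-o.s.s., which completes part (ii). I expect the main obstacle to be the core step: when $H_1$ has a nonzero nilpotent part the family $\{c^{H_1}:c>0\}$ is unbounded both as $c\to 0$ and as $c\to\infty$, so one must carefully separate the norm-bounded semisimple factor from the polynomially growing nilpotent factor and control the interplay between this growth, tightness, and convergence in probability.
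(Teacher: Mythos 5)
Your argument is correct, but it reaches the conclusion by a genuinely different route from the paper's. The paper deduces the theorem from a chain of structural lemmas in the style of Hudson and Mason: Lemma \ref{l:HM_lemma6.3_analog} handles the $V_2$-component, Lemma \ref{l:admissibility_of_H} (proved via $E$-polar coordinates) shows that eigenvalues of $H$ with zero real part must be simple roots of the minimal polynomial, Lemma \ref{l:HM_lemma6.2_analog} then gives fullness of $\pi_1 X(0)$, and Lemma \ref{l:HM_lemma6.1_analog} uses the resulting compactness of $\overline{\{c^{H_1}:c>0\}}$ to produce the constant version. You bypass the semisimplicity of $H_1$ entirely: working with the increment $Y(t)=X_1(t)-X_1(0)$, which satisfies $Y(c^Et)\stackrel{d}= c^{H_1}Y(t)$, you factor $c^{H_1}=c^{S_1}c^{N_1}$, bound $c^{S_1}$ uniformly in $c$ via conjugacy into the orthogonal group, and kill the nilpotent contribution by playing the pointwise $|\log c|^{j}$ divergence on $\{N_1Y(t)\neq 0\}$ against weak convergence of $c^{H_1}Y(t)$ to $\delta_0$ (a Fatou/portmanteau argument), concluding $Y(t)=0$ a.s.\ directly; likewise you rule out eigenvalues of $H$ with negative real part by a tightness argument instead of the Jordan-block projection used in Proposition \ref{p:minRe(E)=0<=>minRe(H)=0}(i). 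Your route is shorter and more self-contained (no polar coordinates, no fullness-of-$X(0)$ step, no compact-closure argument), whereas the paper's route produces the auxiliary facts (the admissibility characterization of $H$ in Lemma \ref{l:admissibility_of_H} and the equivalence in Lemma \ref{l:HM_lemma6.2_analog}) as results of independent interest and, in particular, identifies that $H_1$ has no nilpotent part, which your proof neither needs nor establishes. One small caveat: your nondegeneracy argument for $X_2$ invokes nondegeneracy of $X$, which is not among the theorem's stated hypotheses (only properness, stochastic continuity and the scaling relation); since the paper's own proof does not address nondegeneracy of $X_2$ at all, this is a matter of matching the definition of o.s.s.\ rather than a gap in your reasoning.
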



\section{Proofs}\label{s:proofs}

Lemmas \ref{e:A^(-1)w_k->0}--\ref{l:G1_is_not_a_neighborhood_of_I}, to be stated and defined next, will be used in the proofs of Proposition \ref{p:Gdom_Gran_are_compact_groups} and Theorem \ref{t:existence_of_domain_exponents}. Define the operator norm $\| A \|= \sup \{ \|Aw\|: \| w \|=1 \}$ for any $A\in M(m,\bbR)$. Hereinafter the symbol $X \stackrel{d}= Y$ denotes the equality in distribution of two random vectors or variables $X$ and $Y$.

\begin{lem}\label{e:A^(-1)w_k->0}
Let $\{A_k\}_{k \in \bbN} \subseteq GL(m,\bbR)$ such that $\| A_k \|\rightarrow \infty$. Then, there exists a sequence $\{w_{k}\} \subseteq S^{m-1}_{\bbR}:= \{w\in\bbR^m:\|w\|=1\}$  such that $A^{-1}_{k} w_{k}\rightarrow 0$.
\end{lem}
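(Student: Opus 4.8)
The plan is to read off a suitable sequence $\{w_k\}$ directly from the supremum definition of the operator norm. Since $\|A_k\| = \sup\{\|A_k v\|:\|v\|=1\}$, for each $k$ I can select a unit vector $v_k \in S^{m-1}_{\bbR}$ that nearly attains this supremum, say $\|A_k v_k\| \ge \tfrac12 \|A_k\|$ (any fixed positive fraction, or the choice $\|A_k v_k\| > \|A_k\| - 1$, works equally well). Because $A_k \in GL(m,\bbR)$ is invertible, $A_k v_k \neq 0$, so the normalization $w_k := A_k v_k / \|A_k v_k\|$ is well defined and $w_k \in S^{m-1}_{\bbR}$.

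With this choice, $A_k^{-1} w_k = v_k / \|A_k v_k\|$, and therefore
\[
\|A_k^{-1} w_k\| = \frac{\|v_k\|}{\|A_k v_k\|} = \frac{1}{\|A_k v_k\|} \le \frac{2}{\|A_k\|} \longrightarrow 0
\]
as $k \to \infty$, by the hypothesis $\|A_k\| \to \infty$. This produces the desired sequence $\{w_k\} \subseteq S^{m-1}_{\bbR}$ with $A_k^{-1} w_k \to 0$.

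There is essentially no obstacle here: the only point requiring attention is that the normalization defining $w_k$ is legitimate, which is exactly where invertibility of $A_k$ is used (it guarantees $A_k v_k \neq 0$, hence also $\|A_k\| > 0$ for every $k$), and that $\|A_k\| \to \infty$ forces $1/\|A_k\| \to 0$. Everything else is the one-line estimate above, so the argument is complete.
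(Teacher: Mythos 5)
Your proof is correct and follows essentially the same route as the paper: choose unit vectors $v_k$ on which $\|A_k v_k\|$ is large, set $w_k = A_k v_k/\|A_k v_k\|$, and observe $\|A_k^{-1}w_k\| = 1/\|A_k v_k\| \to 0$. The only (immaterial) difference is that the paper takes exact maximizers of $v \mapsto \|A_k v\|$ via compactness of the sphere, while you use near-maximizers with the bound $\|A_k v_k\| \ge \tfrac12\|A_k\|$.
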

\begin{proof}
By compactness and continuity, there exists a sequence $\{v_k\}_{k \in \bbN} \in S^{m-1}_{\bbR}$ such that $\| A_k v_k  \| \rightarrow \infty$. Now let
$\frac{A_k v_k}{\| A_k v_k  \|} \in S^{m-1}_{\bbR}$. Then,
$$
\| A^{-1}_k w_k \| = \frac{1}{\| A_k v_k  \| } \rightarrow 0, \quad k \rightarrow \infty,
$$
which establishes the claim. $\Box$\\
\end{proof}

{\sc Proof of Proposition \ref{p:Gdom_Gran_are_compact_groups}}: Since $X$ is proper and nondegenerate, it is easy to check that $G^{\textnormal{dom}}_1$ and $G^{\textnormal{ran}}_1$ are groups.  Hence we need only establish their topological properties. We first look at $G^{\textnormal{ran}}_1$. To show closedness (in the relative topology of $GL(n,\bbR)$, c.f.\ Lemma \ref{l:zeta_is_cont}), let $G^{\textnormal{ran}}_1 \ni A_{k} \rightarrow A \in GL(n,\bbR)$. Then,
$$
(X(t_1), \hdots, X(t_j) ) \stackrel{d}= (A_k X(t_1), \hdots, A_k X(t_j) ) \stackrel{P}\rightarrow (A X(t_1), \hdots, A X(t_j) ), \quad k \rightarrow \infty,
$$
i.e., $A \in G^{\textnormal{ran}}_1$. As for boundedness, by contradiction assume that there exists some $\{A_{k} \}_{k \in \bbN} \subseteq G^{\textnormal{ran}}_1$ such that $\|A_k\| \rightarrow \infty$. Then, $X(t_0) \stackrel{d}= A^{-1}_k X(t_0)$, $t_0 \neq 0$. Since we also have $\|A^*_k\| \rightarrow \infty$, then by the auxiliary Lemma \ref{e:A^(-1)w_k->0} there is a convergent subsequence $\{w_{k'}\} \subseteq S^{n-1}_{\bbR}$, $w_{k'}\rightarrow w_0$, such that $(A^*_{k'})^{-1}w_{k'} \rightarrow 0$, $k' \rightarrow \infty$. Consequently,
$$
w^*_{0}X(t_0) \leftarrow w^*_{k'}X(t_0) \stackrel{d}= w^*_{k'}A^{-1}_{k'} X(t_0) \stackrel{P}\rightarrow 0.
$$
This contradicts the properness of $X(t_0)$.

We now turn to $G^{\textnormal{dom}}_1$. To show closedness, take $\{A_k\}_{k \in \bbN} \subseteq G^{\textnormal{dom}}_{1}$ such that $A_k \rightarrow A \in M(m,\bbR)$. Consider any $j$-tuple $t_1,\hdots,t_j \in \bbR^m$. Then,
$$
(X(t_1),\hdots,X(t_j) ) \stackrel{d}= ( X(A_k  t_1),\hdots, X(A_k  t_j) ) \stackrel{P}\rightarrow (X(A t_1),\hdots, X(A t_j) ),
$$
where convergence follows from stochastic continuity. Then, $A \in G^{\textnormal{dom}}_{1}$. Since $X$ is nondegenerate, then $A \in GL(m,\bbR)$. Thus, $G^{\textnormal{dom}}_{1}$ is closed in the latter group.

To show boundedness, by contradiction suppose that there exists $\{A_k\}_{k \in \bbN} \subseteq G^{\textnormal{dom}}_{1}$ such that $\norm{A_k} \rightarrow \infty$. By Lemma \ref{e:A^(-1)w_k->0}, there is a subsequence $\{w_{k'}\} \subseteq S^{m-1}_{\bbR}$, $w_{k'} \rightarrow w_0$, such that $A^{-1}_{k}w_{k'} \rightarrow 0$. Therefore, since $X(0)=0$ a.s.,
$$
0 = X(0) \stackrel{P}\leftarrow X(A^{-1}_{k'} w_{k'}) \stackrel{d}= X(w_{k'}) \stackrel{P}\rightarrow X(w_0).
$$
This contradicts the properness of $X(w_0)$. $\Box$\\

The next lemmas show that an $H$-domain o.s.s.\ random vector field $X$ must satisfy a domain scaling law. For any $\lambda > 0$ and any $C_{\lambda} \in GL(m,\bbR)$ such that
\begin{equation}\label{e:def_Clambda_X}
X(C^{-1}_{\lambda}t) \simeq \lambda^{H}X(t),
\end{equation}
let $G_\lambda$ denote the class of matrices defined by
\begin{equation}\label{e:def_Glambda_X}
G_{\lambda} = C_{\lambda} G^{\textnormal{dom}}_1 \neq \emptyset.
\end{equation}
Note that, since $X$ is domain o.s.s., the set $G_\lambda$ is not empty. Also, note that $G^{\textnormal{dom}}_1=G_1$.

\begin{lem}\label{l:Glambda_welldefined_1}
A matrix $D \in GL(m,\bbR)$ satisfies
\begin{equation}\label{e:varphi(C^(-1)x)=lambda^(-alphaQ)varphi(x)}
X(D^{-1} t) \simeq \lambda^{H}X(t)
\end{equation}
if and only if
\begin{equation}\label{e:C_in_Glambda}
D \in G_{\lambda}.
\end{equation}
\end{lem}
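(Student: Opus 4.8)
\textbf{Proof plan for Lemma \ref{l:Glambda_welldefined_1}.}
The plan is to prove a chain of equivalences, reducing the statement ``$X(D^{-1}t)\simeq\lambda^H X(t)$ iff $D\in G_\lambda$'' to the definition \eqref{e:def_Glambda_X} of $G_\lambda$ together with the group structure of $G^{\textnormal{dom}}_1$ established in Proposition \ref{p:Gdom_Gran_are_compact_groups}. Fix $\lambda>0$ and a reference matrix $C_\lambda\in GL(m,\bbR)$ satisfying \eqref{e:def_Clambda_X}, so that $G_\lambda=C_\lambda G^{\textnormal{dom}}_1$.

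First I would prove the ``if'' direction. Suppose $D\in G_\lambda$, so $D=C_\lambda A$ for some $A\in G^{\textnormal{dom}}_1$. Then $D^{-1}=A^{-1}C^{-1}_\lambda$, and since $G^{\textnormal{dom}}_1$ is a group, $A^{-1}\in G^{\textnormal{dom}}_1$, which by definition \eqref{e:def_G1} means $X(A^{-1}s)\simeq X(s)$ for all $s$. Applying this equality of finite-dimensional distributions with $s=C^{-1}_\lambda t$ gives $X(D^{-1}t)=X(A^{-1}C^{-1}_\lambda t)\simeq X(C^{-1}_\lambda t)\simeq \lambda^H X(t)$, where the last step is \eqref{e:def_Clambda_X}. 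This yields \eqref{e:varphi(C^(-1)x)=lambda^(-alphaQ)varphi(x)}.

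For the ``only if'' direction, suppose $D\in GL(m,\bbR)$ satisfies $X(D^{-1}t)\simeq\lambda^H X(t)$. Combining this with \eqref{e:def_Clambda_X} gives $X(D^{-1}t)\simeq X(C^{-1}_\lambda t)$ as random fields indexed by $t\in\bbR^m$. Set $A:=C^{-1}_\lambda D$; substituting $t=C_\lambda s$ into the previous equivalence yields $X(A^{-1}s)=X(D^{-1}C_\lambda s)\simeq X(C^{-1}_\lambda C_\lambda s)=X(s)$ for all $s\in\bbR^m$, hence $A^{-1}\in G^{\textnormal{dom}}_1$, and therefore $A\in G^{\textnormal{dom}}_1$ since the latter is a group. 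Then $D=C_\lambda A\in C_\lambda G^{\textnormal{dom}}_1=G_\lambda$, which is \eqref{e:C_in_Glambda}.

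The argument is essentially bookkeeping with changes of variable in the f.d.d.\ relations, so there is no serious obstacle; the only point requiring care is that both $G^{\textnormal{dom}}_1$ being a group (so that it is closed under inversion and left translation is a bijection) and the invertibility of all the matrices involved are used, and these are guaranteed by Proposition \ref{p:Gdom_Gran_are_compact_groups} together with the standing nondegeneracy hypothesis on $X$. One should also note that the identity $G^{\textnormal{dom}}_1=G_1$ is the special case $\lambda=1$, $C_1=I$ of this lemma, consistent with \eqref{e:def_Glambda_X}.
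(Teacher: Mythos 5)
Your proof is correct and follows essentially the same route as the paper: both directions reduce to the definition $G_\lambda=C_\lambda G^{\textnormal{dom}}_1$ by substituting $C_\lambda t$ (resp.\ $C_\lambda^{-1}t$) into the relevant f.d.d.\ equalities and invoking closure of $G^{\textnormal{dom}}_1$ under inversion. The paper's proof is just a more compressed version of the same chain of substitutions.
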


\begin{proof}
Assume (\ref{e:varphi(C^(-1)x)=lambda^(-alphaQ)varphi(x)}) holds. Then, $X(D^{-1}C_{\lambda}t)\simeq\lambda^H X(C_{\lambda}t) \simeq X(C^{-1}_{\lambda}C_{\lambda}t) = X(t)$ by \eqref{e:def_Clambda_X}.
Therefore, $C^{-1}_{\lambda} D \in
G^{\textnormal{dom}}_{1}$, whence $D \in C_{\lambda} G^{\textnormal{dom}}_{1} = G_{\lambda}$. Conversely, assume (\ref{e:C_in_Glambda}) holds. Then, there exists $S \in G^{\textnormal{dom}}_{1}$ such that $D=C_\lambda S$, and so $X(D^{-1} t) \simeq X(S^{-1}C^{-1}_{\lambda} t)
\simeq X(C^{-1}_{\lambda} t) \simeq \lambda^{H}X(t)$.
$\Box$\\
\end{proof}

\begin{lem}\label{l:Glambda=CS(varphi)}
For any matrix $C \in G_{\lambda}$ we can write
\begin{equation}\label{e:Glambda=C_G1}
G_{\lambda} = C G^{\textnormal{dom}}_{1}.
\end{equation}
Moreover, for any choice of $C_\lambda, D_\lambda \in GL(m,\bbR)$ satisfying condition \eqref{e:def_Clambda_X}, $G_{\lambda} = C_{\lambda} G^{\textnormal{dom}}_1 =D_{\lambda} G^{\textnormal{dom}}_1$.
\end{lem}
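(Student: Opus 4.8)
The plan is to leverage the intrinsic characterization of $G_\lambda$ supplied by Lemma \ref{l:Glambda_welldefined_1}: a matrix $D\in GL(m,\bbR)$ lies in $G_\lambda$ if and only if $X(D^{-1}t)\simeq\lambda^H X(t)$, a condition that makes no reference to the particular matrix $C_\lambda$ used in the defining equation \eqref{e:def_Glambda_X}. Once this is in hand, the lemma reduces to the standard fact that any element of a left coset of the subgroup $G^{\textnormal{dom}}_1$ generates that coset.

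For the first assertion I would fix $C\in G_\lambda$ and prove \eqref{e:Glambda=C_G1} by double inclusion. For $G_\lambda\subseteq CG^{\textnormal{dom}}_1$, take $D\in G_\lambda$. By Lemma \ref{l:Glambda_welldefined_1} both $C$ and $D$ satisfy the scaling relation \eqref{e:varphi(C^(-1)x)=lambda^(-alphaQ)varphi(x)}, so
$$
X(D^{-1}Ct)\simeq\lambda^H X(Ct)\simeq X(C^{-1}Ct)=X(t),
$$
which shows $C^{-1}D\in G^{\textnormal{dom}}_1$ and hence $D=C(C^{-1}D)\in CG^{\textnormal{dom}}_1$. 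For the reverse inclusion, take $S\in G^{\textnormal{dom}}_1$; since $S^{-1}\in G^{\textnormal{dom}}_1$ as well, $X((CS)^{-1}t)=X(S^{-1}C^{-1}t)\simeq X(C^{-1}t)\simeq\lambda^H X(t)$, so $CS\in G_\lambda$ by Lemma \ref{l:Glambda_welldefined_1}.

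For the second assertion, if $C_\lambda,D_\lambda\in GL(m,\bbR)$ both satisfy \eqref{e:def_Clambda_X}, then each of them satisfies the hypothesis \eqref{e:varphi(C^(-1)x)=lambda^(-alphaQ)varphi(x)} of Lemma \ref{l:Glambda_welldefined_1}, hence $C_\lambda\in G_\lambda$ and $D_\lambda\in G_\lambda$. Applying \eqref{e:Glambda=C_G1} once with $C=C_\lambda$ and once with $C=D_\lambda$ yields $G_\lambda=C_\lambda G^{\textnormal{dom}}_1=D_\lambda G^{\textnormal{dom}}_1$.

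I do not expect a genuine obstacle: all the real content is in Lemma \ref{l:Glambda_welldefined_1}, and the present lemma is essentially a bookkeeping statement about cosets. The only subtlety worth flagging is that the definition \eqref{e:def_Glambda_X} of $G_\lambda$ a priori depends on the choice of $C_\lambda$; the point of Lemma \ref{l:Glambda_welldefined_1}, and hence of this lemma, is precisely to show that it does not, which is what legitimizes the manipulations above and makes $G_\lambda$ a well-defined object for use in the subsequent proofs.
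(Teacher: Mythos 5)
Your proof is correct and follows essentially the same route as the paper: both inclusions are obtained by reducing to Lemma \ref{l:Glambda_welldefined_1} and the coset structure of $G^{\textnormal{dom}}_1$ (your substitution $t \mapsto Ct$ versus the paper's $t \mapsto Dt$ is an immaterial variation), and the second assertion is handled identically. The only implicit step, passing from $D^{-1}C \in G^{\textnormal{dom}}_1$ to $C^{-1}D \in G^{\textnormal{dom}}_1$ via the group property, is the same one the paper uses, so there is nothing to fix.
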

\begin{proof}
Let $C \in G_{\lambda}$. If $D = CS$ for some $S \in G^{\textnormal{dom}}_{1}$, then by Lemma \ref{l:Glambda_welldefined_1},
$$
X(D^{-1}t) \simeq X(S^{-1}C^{-1}t) \simeq X(C^{-1}t) \simeq
\lambda^{H}X(t).
$$
Thus, again by Lemma \ref{l:Glambda_welldefined_1}, $D \in G_{\lambda}$. This shows that $CG^{\textnormal{dom}}_{1}\subset G_\lambda$.

Conversely, if $D \in G_{\lambda}$, then for any $C\in G_{\lambda}$ we have $X(D^{-1}t) \simeq \lambda^H X(t) \simeq X(C^{-1}t)$, by Lemma \ref{l:Glambda_welldefined_1}, which implies that
$X(C^{-1}Dt) \simeq X(D^{-1}Dt) = X(t)$. Thus, $C^{-1}D \in G^{\textnormal{dom}}_{1}$. Therefore, $D \in C G^{\textnormal{dom}}_{1}$, proving $G_\lambda \subset CG^{\textnormal{dom}}_{1}$ and establishing \eqref{e:Glambda=C_G1}.

Now let $C_{\lambda}, D_{\lambda} \in GL(m,\bbR)$ be two matrices satisfying \eqref{e:def_Clambda_X}. Lemma \ref{l:Glambda_welldefined_1} implies that $C_{\lambda}, D_{\lambda} \in G_{\lambda}$. Thus, \eqref{e:Glambda=C_G1} implies that $G_{\lambda} = C_{\lambda} G^{\textnormal{dom}}_1 = D_{\lambda} G^{\textnormal{dom}}_1$. $\Box$\\
\end{proof}

\begin{lem}\label{l:Glambda_welldefined_3}
For any $\lambda, \mu > 0$, if $G_{\lambda} \cap
G_{\mu} \neq \emptyset$, then $G_{\lambda} = G_{\mu}$.
\end{lem}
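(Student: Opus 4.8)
The plan is to exploit the coset structure established in Lemma \ref{l:Glambda=CS(varphi)}: each $G_\lambda$ is a left coset $C_\lambda G^{\textnormal{dom}}_1$ of the subgroup $G^{\textnormal{dom}}_1 \leq GL(m,\bbR)$, and distinct cosets of a subgroup are always disjoint. So the statement is, at its core, the elementary group-theoretic fact that two cosets that meet must coincide; the only thing to verify is that the sets $G_\lambda$ genuinely are cosets of a common subgroup, which is exactly the content of Lemma \ref{l:Glambda=CS(varphi)}.

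First I would take a matrix $D \in G_\lambda \cap G_\mu$. Since $D \in G_\lambda$, Lemma \ref{l:Glambda_welldefined_1} gives $X(D^{-1}t) \simeq \lambda^H X(t)$; since $D \in G_\mu$, the same lemma gives $X(D^{-1}t) \simeq \mu^H X(t)$. (This incidentally forces $\lambda^H = \mu^H$ on the support, but we do not need that; we only need the two scaling identities to have a common left-hand side.) Now, by Lemma \ref{l:Glambda=CS(varphi)} applied with the choice $C = D$, we have both $G_\lambda = D\, G^{\textnormal{dom}}_1$ and $G_\mu = D\, G^{\textnormal{dom}}_1$, because $D$ belongs to each of $G_\lambda$ and $G_\mu$ and that lemma asserts $G_\nu = C\, G^{\textnormal{dom}}_1$ for \emph{any} $C \in G_\nu$. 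Hence $G_\lambda = D\,G^{\textnormal{dom}}_1 = G_\mu$, which is the claim.

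There is essentially no obstacle here: the work has already been front-loaded into Lemma \ref{l:Glambda=CS(varphi)}, so this lemma is a one-line corollary. The only point requiring a moment's care is to invoke Lemma \ref{l:Glambda=CS(varphi)} with $C$ chosen to be the common element $D$ rather than with the original representatives $C_\lambda, C_\mu$ — that is precisely what converts "$D$ lies in both $G_\lambda$ and $G_\mu$" into "$G_\lambda$ and $G_\mu$ are the same coset." I would write the proof in two or three sentences along exactly these lines.
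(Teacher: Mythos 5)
Your proof is correct and rests on the same underlying idea as the paper's, namely that the sets $G_\lambda$ are left cosets of the subgroup $G^{\textnormal{dom}}_1$ and cosets that meet must coincide; your shortcut of invoking Lemma \ref{l:Glambda=CS(varphi)} with the common element $D$ as representative is legitimate (that lemma precedes this one, so there is no circularity) and simply packages the coset manipulation that the paper carries out explicitly with the representatives $C_\lambda$, $C_\mu$ and the group property of $G^{\textnormal{dom}}_1$. No gaps.
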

\begin{proof}
Assume there exists $A \in GL(m,\bbR)$ such that $A \in  G_{\lambda} \cap G_{\mu} = C_{\lambda} G^{\textnormal{dom}}_1 \cap C_{\mu} G^{\textnormal{dom}}_1$ for some $C_{\lambda}, C_{\mu} \in GL(m,\bbR)$. Therefore, there exist $S_{\lambda}, S_{\mu} \in G^{\textnormal{dom}}_1$ such that
$A = C_{\lambda} S_{\lambda} = C_{\mu} S_{\mu}$. Thus, $C_{\mu} = C_{\lambda}(S_{\lambda}S^{-1}_{\mu})$. Consequently, for all $A_{\mu} \in G_{\mu}$, there exists
$S_{A_{\mu}} \in G^{\textnormal{dom}}_1$ such that
$$
A_{\mu} = C_{\mu}S_{A_{\mu}} =
C_{\lambda}(S_{\lambda}S^{-1}_{\mu}S_{A_{\mu}}),
$$
where $S_{\lambda}S^{-1}_{\mu}S_{A_{\mu}} \in G^{\textnormal{dom}}_1$. Then $A_{\mu} \in G_{\lambda}$, which shows that $G_{\mu}\subseteq G_{\lambda}$. The same argument can be used for the converse. $\Box$
\end{proof}
\begin{lem}\label{l:G_is_group}
For matrix classes $G_{\lambda}$, $G_{\mu}$, $\lambda, \mu > 0$, as in \eqref{e:def_Glambda_X}, define the product relation
\begin{equation}\label{e:Glambda*Gmu}
G_{\lambda}G_{\mu} = \{A \in M(m,\bbR): A = C_{\lambda} S_{\lambda}C_{\mu}S_{\mu}, \hspace{1mm}\textnormal{for some }S_{\lambda},S_{\mu} \in G^{\textnormal{dom}}_1\}, \quad \lambda, \mu > 0.
\end{equation}
Then, under \eqref{e:Glambda*Gmu}, the set
\begin{equation}\label{e:G}
G := \bigcup_{\lambda > 0} G_{\lambda}
\end{equation}
is a group of equivalence classes $G_{\bullet}$ of matrices in $GL(m,\bbR)$.
\end{lem}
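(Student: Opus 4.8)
The plan is to verify the group axioms for $G = \bigcup_{\lambda>0} G_\lambda$ under the product \eqref{e:Glambda*Gmu}, using the preceding lemmas to make everything well-defined at the level of equivalence classes. First I would observe that the $G_\lambda$ partition $G$: by Lemma \ref{l:Glambda_welldefined_3}, two classes $G_\lambda$ and $G_\mu$ are either disjoint or equal, so the $G_\lambda$ are genuine equivalence classes and $G$ is their disjoint union. Next I would establish closure: given $C_\lambda$ satisfying \eqref{e:def_Clambda_X} for $\lambda$ and $C_\mu$ satisfying it for $\mu$, I claim the product matrix $C_\mu C_\lambda$ (or $C_\lambda C_\mu$ — one has to be careful about order) satisfies \eqref{e:def_Clambda_X} for the parameter $\lambda\mu$. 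Indeed, from $X(C^{-1}_\mu t) \simeq \mu^H X(t)$ and $X(C^{-1}_\lambda t)\simeq \lambda^H X(t)$, substituting $C^{-1}_\lambda t$ for $t$ in the first gives $X(C^{-1}_\mu C^{-1}_\lambda t)\simeq \mu^H X(C^{-1}_\lambda t)\simeq \mu^H\lambda^H X(t) = (\lambda\mu)^H X(t)$, since $\mu^H$ and $\lambda^H$ commute (both are powers of $H$). Hence $(C_\lambda C_\mu)^{-1} = C^{-1}_\mu C^{-1}_\lambda$ works, i.e. $C_\lambda C_\mu \in G_{\lambda\mu}$, and by Lemma \ref{l:Glambda=CS(varphi)} the set $G_\lambda G_\mu$ — which by \eqref{e:Glambda*Gmu} is $C_\lambda G^{\textnormal{dom}}_1 C_\mu G^{\textnormal{dom}}_1$ — is contained in $G_{\lambda\mu}$; in fact it equals $G_{\lambda\mu}$ because $C_\lambda C_\mu$ already lies in $G_\lambda G_\mu$ (take $S_\lambda = S_\mu = I$) and $G_{\lambda\mu}= (C_\lambda C_\mu) G^{\textnormal{dom}}_1$, with the final $G^{\textnormal{dom}}_1$ factor absorbed into $S_\mu$. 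So the product is well-defined on classes: $G_\lambda \cdot G_\mu = G_{\lambda\mu}$.

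With closure in hand the remaining axioms are routine. Associativity of the product on classes follows from $(G_\lambda G_\mu) G_\nu = G_{(\lambda\mu)\nu} = G_{\lambda(\mu\nu)} = G_\lambda (G_\mu G_\nu)$, using associativity of multiplication in $\bbR_{>0}$. The identity element is $G_1 = G^{\textnormal{dom}}_1$: since $I$ trivially satisfies \eqref{e:def_Clambda_X} with $\lambda=1$, we have $G_1 = I\cdot G^{\textnormal{dom}}_1 = G^{\textnormal{dom}}_1$, and $G_1 G_\mu = G_{1\cdot\mu} = G_\mu$. For inverses, given $G_\lambda$ with representative $C_\lambda$ satisfying \eqref{e:def_Clambda_X}, the matrix $C^{-1}_\lambda$ satisfies $X(C_\lambda t) \simeq (\lambda^{-1})^H X(t)$: apply \eqref{e:def_Clambda_X} with $t$ replaced by $C_\lambda t$ to get $X(t)\simeq \lambda^H X(C_\lambda t)$, then multiply both sides by $\lambda^{-H}$. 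Hence $C^{-1}_\lambda \in G_{1/\lambda}$, so $G_{1/\lambda}$ is the inverse class, $G_\lambda G_{1/\lambda} = G_1$.

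I expect the only real subtlety — the main obstacle — to be bookkeeping about composition order and showing that the product \eqref{e:Glambda*Gmu} of the two cosets is \emph{exactly} $G_{\lambda\mu}$ rather than merely contained in it; this is where Lemma \ref{l:Glambda=CS(varphi)} (independence of the coset on the choice of representative $C_\lambda$) is essential, since a priori $C_\lambda S_\lambda C_\mu S_\mu$ need not be of the form $C_\lambda C_\mu S$. The resolution is that $C_\lambda S_\lambda$ is itself a valid choice of $C_\lambda$ by Lemma \ref{l:Glambda_welldefined_1}, so $C_\lambda S_\lambda C_\mu$ is a valid choice of $C_{\lambda\mu}$, and then appending $S_\mu \in G^{\textnormal{dom}}_1$ keeps us inside $G_{\lambda\mu} = (C_\lambda S_\lambda C_\mu) G^{\textnormal{dom}}_1$. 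Everything else is a direct consequence of the algebra of $\bbR_{>0}$ under multiplication, and the fact that the $G_\lambda$'s are pairwise disjoint (Lemma \ref{l:Glambda_welldefined_3}) guarantees the class multiplication is single-valued.
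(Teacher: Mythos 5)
Your proof is correct and takes essentially the same route as the paper's: the two key computations (that $C_\lambda C_\mu$ satisfies \eqref{e:def_Clambda_X} with parameter $\lambda\mu$, and that $C_\lambda^{-1}$ satisfies it with parameter $1/\lambda$), combined with Lemmas \ref{l:Glambda_welldefined_1} and \ref{l:Glambda=CS(varphi)}, yield $G_\lambda G_\mu = G_{\lambda\mu}$ and $G_\lambda^{-1}=G_{1/\lambda}$, your only (harmless) deviation being that you get the inclusion $G_{\lambda\mu}\subseteq G_\lambda G_\mu$ directly from the coset description $G_{\lambda\mu}=(C_\lambda C_\mu)G^{\textnormal{dom}}_1$, whereas the paper derives it from the inverse relation by a parameter substitution. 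One small citation slip: pairwise disjointness of the classes is Lemma \ref{l:Gr_inter_Gs=empty}, proved \emph{after} this lemma (Lemma \ref{l:Glambda_welldefined_3} gives only ``disjoint or equal''), but this costs you nothing, since single-valuedness of the class product already follows from the representative-independence in Lemma \ref{l:Glambda=CS(varphi)}, which you invoke.
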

\begin{proof}
Let $C \in G_{\lambda}$ and $\lambda > 0$. Since $X$ is nondegenerate, there exist $C_{\lambda}$ and $S_C \in G^{\textnormal{dom}}_1$ such that
$$
X(t) = X(C^{-1}Ct) = X(S^{-1}_{C}C^{-1}_{\lambda}Ct) \simeq
X(C^{-1}_{\lambda}Ct) \simeq  \lambda^{H}X(Ct).
$$
Thus, $C^{-1} \in G_{1/ \lambda}$, which implies that
$G^{-1}_{\lambda} \subseteq G_{1/ \lambda}$. By taking $\sigma =
1/\lambda$, this in turn implies that $G_{1/\sigma} \subseteq
G^{-1}_{\sigma}$, and therefore $G_{1/ \lambda} \subseteq G^{-1}_{\lambda}$.
As a consequence,
\begin{equation}\label{e:G^(-1)lambda=G_1/lambda}
G^{-1}_{\lambda} = G_{1/ \lambda} \in G.
\end{equation}
Now take $C \in G_{\lambda}, D \in G_{\mu}$. Then
$$
X(D^{-1}C^{-1}t) \simeq \mu^{H}X(C^{-1}t) \simeq
\mu^{H}\lambda^{H}X(t) = (\mu
\lambda)^{H}X(t).
$$
Thus, $CD \in G_{\mu \lambda}$ and, consequently, $G_{\lambda} G_{\mu} \subseteq G_{\lambda \mu}$. By taking $r = 1/\lambda$, $s = \lambda \mu$, we also obtain that $G_{1/r}G_{rs} \subseteq G_{s}$. Expression (\ref{e:G^(-1)lambda=G_1/lambda}) then implies that $G^{-1}_{r}G_{rs} \subseteq
G_{s}$. Thus, $G_{rs} \subseteq G_{r} G_{s}$. Therefore,
\begin{equation}\label{e:GmuGlambda=Gmulambda}
G_{\mu} G_{\lambda} = G_{\mu \lambda} \in G.
\end{equation}
Consequently, $G$ is a group, as claimed. $\Box$\\
\end{proof}

\begin{lem}\label{l:Gr_inter_Gs=empty}
For $\lambda \neq \mu$, $G_{\lambda} \cap G_{\mu} = \emptyset$.
\end{lem}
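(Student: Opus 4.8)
The plan is to argue by contradiction using the scaling relation that defines the classes $G_\lambda$. Suppose $\lambda \neq \mu$ but $G_\lambda \cap G_\mu \neq \emptyset$. By Lemma \ref{l:Glambda_welldefined_3} this forces $G_\lambda = G_\mu$. Pick any $C \in G_\lambda = G_\mu$; then by Lemma \ref{l:Glambda_welldefined_1} the single matrix $C$ satisfies both $X(C^{-1}t)\simeq \lambda^H X(t)$ and $X(C^{-1}t)\simeq \mu^H X(t)$. Comparing the two right-hand sides at a fixed $t_0\neq 0$ gives $\lambda^H X(t_0) \stackrel{d}{=} \mu^H X(t_0)$, equivalently $X(t_0) \stackrel{d}{=} (\mu/\lambda)^{-H} (\lambda/\mu)^{-H}\cdots$ — more cleanly, $(\mu^{-1}\lambda)^H X(t_0) \stackrel{d}{=} X(t_0)$. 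Write $\rho := \lambda/\mu$, so WLOG $\rho > 1$ (otherwise swap the roles of $\lambda,\mu$), and we have $\rho^{H} X(t_0) \stackrel{d}{=} X(t_0)$ for all $t_0 \neq 0$. Iterating, $\rho^{kH} X(t_0) \stackrel{d}{=} X(t_0)$ for every $k \in \bbN$. This says the constant matrix $\rho^{H}$ — and all its powers $\rho^{kH}$ — lie in the range symmetry group $G^{\textnormal{ran}}_1$.

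The contradiction now comes from Proposition \ref{p:Gdom_Gran_are_compact_groups}: $G^{\textnormal{ran}}_1$ is a compact (hence bounded) subgroup of $GL(n,\bbR)$, so $\sup_k \|\rho^{kH}\| < \infty$. But the eigenvalues of $H$ all have strictly positive real part (a standing assumption), so for any eigenvalue $a+ib$ of $H$ with $a>0$, the matrix $\rho^{kH}$ has an eigenvalue of modulus $\rho^{ka}\to\infty$ as $k\to\infty$ since $\rho>1$; consequently $\|\rho^{kH}\|\geq \rho^{ka}\to\infty$, contradicting boundedness. (Concretely: pick a unit eigenvector, or a vector on which $\|\rho^{kH}v\|$ grows, using Theorem 2.2.4 of Meerschaert and Scheffler \cite{meerschaert:scheffler:2001} which guarantees $\rho^{kH}x\to\infty$ in norm for any $x\neq0$ when $\rho>1$ and the eigenvalues of $H$ have positive real parts.) This rules out $\lambda\neq\mu$, so $G_\lambda \cap G_\mu = \emptyset$ whenever $\lambda\neq\mu$. $\Box$

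The only mild subtlety — and the step I'd be most careful about — is extracting the norm blow-up of $\rho^{kH}$ cleanly: one should invoke the cited Meerschaert–Scheffler result (Theorem 2.2.4) rather than reprove spectral asymptotics, since $H$ need not be diagonalizable and could have a nilpotent part, so a bare eigenvalue-modulus argument needs a word of justification (the Jordan block contributes only polynomial factors in $k$, which do not affect the exponential growth). Everything else is a direct chain through Lemmas \ref{l:Glambda_welldefined_1} and \ref{l:Glambda_welldefined_3} together with the compactness from Proposition \ref{p:Gdom_Gran_are_compact_groups}; no new ideas beyond those already assembled in the preceding lemmas are needed.
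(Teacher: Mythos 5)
Your proof is correct, but it closes the argument differently from the paper, so a comparison is in order. Both arguments start the same way: $G_\lambda\cap G_\mu\neq\emptyset$ forces $G_\lambda=G_\mu$ by Lemma \ref{l:Glambda_welldefined_3}. From there the paper works through the coset algebra of Lemma \ref{l:G_is_group} (the relation \eqref{e:GmuGlambda=Gmulambda}) to conclude $G_{\lambda^k/\mu^k}=G_1$, hence $X(t)\simeq(\lambda^k/\mu^k)^H X(t)$ with $\lambda/\mu<1$, and then contradicts properness directly because $(\lambda^k/\mu^k)^H X(t)\to 0$ in probability. You instead observe that a single matrix $C\in G_\lambda=G_\mu$ witnesses both scaling relations via Lemma \ref{l:Glambda_welldefined_1}, giving $\lambda^H X(t)\simeq\mu^H X(t)$ and hence $\rho^H\in G^{\textnormal{ran}}_1$ with $\rho=\lambda/\mu$ at once --- a genuinely shorter route to the key identity that bypasses Lemma \ref{l:G_is_group} entirely (note your derivation does give the full f.d.d.\ statement $\rho^H X(t)\simeq X(t)$, since both sides have the f.d.d.'s of $X(C^{-1}t)$, which is what membership in $G^{\textnormal{ran}}_1$ as defined in \eqref{e:def_G1} requires, not just equality at one $t_0$). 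Your contradiction then goes through compactness of $G^{\textnormal{ran}}_1$ (Proposition \ref{p:Gdom_Gran_are_compact_groups}) against the norm blow-up $\|\rho^{kH}\|\ge\rho^{ka}\to\infty$ for $\rho>1$, which is sound and, as you note, needs no care about the nilpotent part since an eigenvalue modulus bounds the operator norm from below. Two small caveats: Proposition \ref{p:Gdom_Gran_are_compact_groups} assumes $X(0)=0$ a.s., so you are implicitly using that fact; it does hold here (from $X(0)\stackrel{d}{=}c^H X(0)$ and $c^H\to 0$ as $c\to 0^+$, by the standing assumption on the eigenvalues of $H$), but you should say so. Also, having reached $\rho^H X(t)\simeq X(t)$, you could finish more economically in the paper's style by taking $\rho<1$ and iterating, so that $\rho^{kH}X(t_0)\to 0$ in probability contradicts properness of $X(t_0)$ directly, with no appeal to Proposition \ref{p:Gdom_Gran_are_compact_groups} at all.
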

\begin{proof}
We argue by contradiction. By Lemma \ref{l:Glambda_welldefined_3}, if $G_{\lambda} \cap G_{\mu} \neq \emptyset$, then $G_\lambda = G_\mu$. Without loss of generality, assume that
$\lambda <\mu$. By \eqref{e:GmuGlambda=Gmulambda}, for any $t > 0$, $G_{t\mu} = G_t G_\mu = G_t G_\lambda = G_{t\lambda}$. By taking $t = 1/\mu$, we obtain $G_{\lambda/\mu} = G_{1}$. Thus, for any $t > 0$, $G_{t\lambda/\mu} = G_{t}$. By taking $t = \lambda^k/\mu^k$, we obtain a system of equalities leading to the conclusion that $G_{{\lambda^k}/{\mu^k}}=G_1$, $k \in \bbN$. Thus,
$$
X(t) = X(It) \simeq \Big(\frac{\lambda^k}{\mu^k}\Big)^{H}X(t).
$$
Since every eigenvalue of $H$ has positive real part, a straightforward computation using the Jordan decomposition of $H$ shows that $c^H x\to 0$ as $c\to 0$ for any $x\in\bbR^n$, see Theorem 2.2.4 in Meerschaert and Scheffler \cite{meerschaert:scheffler:2001}. It follows that
$\norm{(\lambda^k/\mu^k)^{H}X(t)}
\stackrel{P}\rightarrow 0$, $k \rightarrow \infty$. We arrive at a contradiction because $X$ is proper. $\Box$\\
\end{proof}

\begin{lem}\label{l:zeta_is_homomorph}
The mapping $\zeta: G \rightarrow \bbR_{+}$ defined by $\zeta(C) = \lambda$ when $C\in G_{\lambda}$ is a group homomorphism.
\end{lem}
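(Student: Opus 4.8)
The plan is to verify the two defining properties of a group homomorphism: that $\zeta$ is well-defined, and that it respects the group operation. Well-definedness is the content of Lemma \ref{l:Gr_inter_Gs=empty}: since $G_\lambda \cap G_\mu = \emptyset$ whenever $\lambda \neq \mu$, each equivalence class $C \in G$ lies in exactly one $G_\lambda$, so the value $\zeta(C) = \lambda$ does not depend on any choices. (One should also note that $\zeta$ is surjective onto $\bbR_+$, since $G_\lambda \neq \emptyset$ for every $\lambda > 0$ by \eqref{e:def_Glambda_X}, and that the domain $G = \bigcup_{\lambda>0} G_\lambda$ is indeed a group by Lemma \ref{l:G_is_group}.)

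For the homomorphism property, I would take two classes $G_\lambda, G_\mu \in G$, so that $\zeta(G_\lambda) = \lambda$ and $\zeta(G_\mu) = \mu$. By \eqref{e:GmuGlambda=Gmulambda} in the proof of Lemma \ref{l:G_is_group}, the product class satisfies $G_\lambda G_\mu = G_{\lambda \mu}$. Hence $\zeta(G_\lambda G_\mu) = \zeta(G_{\lambda\mu}) = \lambda\mu = \zeta(G_\lambda)\zeta(G_\mu)$, which is exactly the multiplicativity required, the group operation on $\bbR_+$ being ordinary multiplication. In particular $\zeta(G_1) = 1$ sends the identity of $G$ (which is $G_1 = G^{\textnormal{dom}}_1$) to the identity of $\bbR_+$, and by \eqref{e:G^(-1)lambda=G_1/lambda}, $\zeta(G_\lambda^{-1}) = \zeta(G_{1/\lambda}) = 1/\lambda$, consistent with a homomorphism.

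There is essentially no obstacle here: all the real work has already been done in Lemmas \ref{l:Glambda_welldefined_3}, \ref{l:G_is_group}, and \ref{l:Gr_inter_Gs=empty}. The only point requiring a moment's care is the bookkeeping about what the "elements" of $G$ are — namely that $G$ is a group of equivalence classes $G_\bullet$ rather than of individual matrices, so that $\zeta$ is a function on those classes — and checking that the relations $G_\lambda G_\mu = G_{\lambda\mu}$ and $G_\lambda^{-1} = G_{1/\lambda}$ are read off correctly from the earlier proof. I would therefore present this as a short argument citing those displayed identities directly.
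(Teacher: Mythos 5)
Your proposal is correct and follows essentially the same route as the paper: well-definedness is deduced from the disjointness of the classes $G_{\lambda}$ (Lemma \ref{l:Gr_inter_Gs=empty}), and multiplicativity from the product relation $G_{\lambda}G_{\mu}=G_{\lambda\mu}$ established in the proof of Lemma \ref{l:G_is_group}. The only cosmetic difference is that the paper phrases the computation for individual matrices $C\in G_{\lambda}$, $D\in G_{\mu}$ (so $CD\in G_{\lambda\mu}$ gives $\zeta(CD)=\zeta(C)\zeta(D)$), whereas you phrase it at the level of the classes $G_{\bullet}$, which amounts to the same thing.
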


\begin{proof}
Lemma \ref{l:Gr_inter_Gs=empty} shows that $\zeta$ is well-defined. Suppose that $C \in G_{\lambda}$, $D \in G_{\mu}$. Then Lemma \ref{l:G_is_group} shows that $CD \in G_{\lambda \mu}$, and $\zeta(CD) = \zeta(C) \zeta(D)$. $\Box$\\
\end{proof}

Given a topogical space $Z$, the subspace (or relative) topology on a subset $U\subseteq Z$ consists of all sets $O\cap U$ where $O$ is an open subset of $Z$.  For example, if $Z=\bbR$ and $U=[0,\infty)$, then $[0,1)$ is an open subset of $U$ in this topology.  For another example, the set $U=\{\lambda I:\lambda>0\}$ is not a closed subset of $M(m,\bbR)$. However, by considering sequences of matrices in $U$, it can be seen that the latter is a closed subset of $Z=GL(m,\bbR)$ in the relative topology, thus implying that $Z\setminus U$ is an open subset of $Z$.

\begin{lem}\label{l:zeta_is_cont}
The group $G$ in \eqref{e:G} is a closed subgroup of $GL(m,\bbR)$ in the relative topology, and $\zeta$ is a continuous function.
\end{lem}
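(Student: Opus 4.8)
The plan is to establish two separate things: that $\zeta:G\to\bbR_+$ is continuous, and that $G$ is closed in $GL(m,\bbR)$ in the relative topology. For continuity of $\zeta$, I would first show it is continuous at the identity class $G_1$, and then use the homomorphism property (Lemma \ref{l:zeta_is_homomorph}) to propagate continuity to all of $G$. Concretely, suppose $C_k\to C$ with $C_k\in G$ and $C\in G$, say $C\in G_\lambda$; I want $\zeta(C_k)\to\lambda$. Writing $\zeta(C_k)=\mu_k$, the quotient $C^{-1}C_k\in G_{\mu_k/\lambda}$ by the homomorphism property, and $C^{-1}C_k\to I\in G_1$, so it suffices to prove: if $A_k\to I$ with $A_k\in G_{\nu_k}$, then $\nu_k\to 1$. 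This is where the scaling relation \eqref{e:varphi(C^(-1)x)=lambda^(-alphaQ)varphi(x)} enters: $X(A_k^{-1}t)\simeq \nu_k^H X(t)$. Since $A_k^{-1}\to I$, stochastic continuity gives $X(A_k^{-1}t)\stackrel{P}{\to}X(t)$, hence $\nu_k^H X(t_0)$ converges in distribution to $X(t_0)$ for fixed $t_0\neq 0$.

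**The main obstacle** is ruling out that $\nu_k$ escapes to $0$ or $\infty$ (or oscillates) along a subsequence. If $\nu_k\to 0$ along a subsequence, then since every eigenvalue of $H$ has positive real part, $\nu_k^H x\to 0$ for every $x$ (Theorem 2.2.4 in \cite{meerschaert:scheffler:2001}, as already invoked in the proof of Lemma \ref{l:Gr_inter_Gs=empty}), so $\nu_k^H X(t_0)\stackrel{P}{\to}0$, contradicting properness of $X(t_0)$ (which forces $X(t_0)\neq 0$ with positive probability, in fact $X(t_0)$ is full). If $\nu_k\to\infty$ along a subsequence, apply the same reasoning to $(\nu_k^{-1})^H = $ the inverse matrices: from $X(A_k^{-1}t)\simeq\nu_k^H X(t)$ we also get $X(A_k t)\simeq \nu_k^{-H}X(t)$, and $\nu_k^{-1}\to 0$, so again $\nu_k^{-H}X(t_0)\stackrel{P}{\to}0$ while $X(A_k t_0)\stackrel{P}{\to}X(t_0)$, contradiction. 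Hence $\{\nu_k\}$ is bounded and bounded away from $0$; any limit point $\nu_*$ of a subsequence satisfies $\nu_*^H X(t_0)\stackrel{d}{=}X(t_0)$, and then the argument in Lemma \ref{l:Gr_inter_Gs=empty} (i.e., $G_{\nu_*}=G_1$ forces $\nu_*=1$ by Lemma \ref{l:Gr_inter_Gs=empty}) gives $\nu_*=1$; since every subsequential limit is $1$, $\nu_k\to 1$.

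**For closedness of $G$ in $GL(m,\bbR)$**, suppose $G\ni C_k\to C\in GL(m,\bbR)$; I must show $C\in G$. Write $\mu_k=\zeta(C_k)$. By the same escape-prevention argument as above (now with $\{C_k\}$ convergent in $GL(m,\bbR)$, hence bounded in norm with bounded inverses, and $C_k\in G_{\mu_k}$ meaning $X(C_k^{-1}t)\simeq\mu_k^H X(t)$), the sequence $\{\mu_k\}$ is bounded and bounded away from $0$: if $\mu_k\to 0$ or $\infty$ along a subsequence, $\mu_k^{\pm H}X(t_0)\stackrel{P}{\to}0$ while $X(C_k^{\mp 1}t_0)$ converges to the full vector $X(C^{\mp 1}t_0)$, a contradiction. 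Pass to a subsequence with $\mu_k\to\mu>0$. Then for each finite tuple $t_1,\dots,t_j$,
\begin{equation*}
(X(C_k^{-1}t_1),\dots,X(C_k^{-1}t_j))\stackrel{d}{=}(\mu_k^H X(t_1),\dots,\mu_k^H X(t_j)),
\end{equation*}
and letting $k\to\infty$ the left side converges in probability to $(X(C^{-1}t_1),\dots,X(C^{-1}t_j))$ by stochastic continuity while the right side converges in probability to $(\mu^H X(t_1),\dots,\mu^H X(t_j))$; hence $X(C^{-1}t)\simeq\mu^H X(t)$, i.e. $C$ satisfies \eqref{e:varphi(C^(-1)x)=lambda^(-alphaQ)varphi(x)} with $\lambda=\mu$, so by Lemma \ref{l:Glambda_welldefined_1} we get $C\in G_\mu\subseteq G$. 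This proves $G$ is a closed subgroup of $GL(m,\bbR)$ in the relative topology, and combined with the first part, $\zeta$ is continuous. $\Box$
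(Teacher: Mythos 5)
Your proof is correct and follows essentially the same route as the paper: you rule out escape of the scaling factors to $0$ or $\infty$ using properness together with the positive real spectrum of $H$, pass to the limit via stochastic continuity and Lemma \ref{l:Glambda_welldefined_1} to get closedness, and pin down the limiting scalar through the disjointness of the classes $G_\lambda$ (Lemma \ref{l:Gr_inter_Gs=empty}). The only difference is cosmetic: you reduce continuity of $\zeta$ to continuity at the identity via the homomorphism property, whereas the paper argues directly on the original sequence by uniqueness of subsequential limits.
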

\begin{proof}
Suppose that $\{D_{k}\}_{k \in \bbN} \subseteq G$ and that $D_k \rightarrow D$ in $GL(m,\bbR)$ as $k \rightarrow \infty$. Then, for all $k$, $D_k \in G_{\lambda_k}$ for some $\lambda_k > 0$. We first need to show that $D \in G$.

If for some subsequence $\{\lambda_{k^{'}}\}$, $\lambda_{k^{'}} \rightarrow \infty$ as $k'\rightarrow \infty$, then $\lambda^{-H}_{k^{'}} \rightarrow 0$ by Theorem 2.2.4 in Meerschaert and Scheffler \cite{meerschaert:scheffler:2001}, since every eigenvalue of $H$ has positive real part. Moreover, for $t \neq 0$, stochastic continuity yields $X(D^{-1}_{k'}t) \stackrel{d}\rightarrow X(D^{-1} t)$ as $k'\rightarrow \infty$. Therefore, in view of \eqref{e:def_Clambda_X} we have $ X(t)  \stackrel{d}=\lambda^{-H}_{k^{'}}X(D^{-1}_{k'}t) \stackrel{P}\rightarrow 0$, which contradicts properness. Therefore, $\{\lambda_{k}\}$ is relatively compact and there is a convergent subsequence such that $\lambda_{k^{'}} \rightarrow \lambda_0$ as $k'\rightarrow \infty$.
If $\lambda_0 = 0$, then for any $t_0 \neq 0$ we have $X(D^{-1}t_0) \stackrel{P}\leftarrow X(D^{-1}_{k'}t_0)\stackrel{d}= \lambda^{H}_{k'}X(t_0) \stackrel{P}\rightarrow 0$, which again contradicts properness. Therefore, $\lambda_0 > 0 $ and
$$
X(D^{-1}t)\stackrel{P}\leftarrow  X(D^{-1}_{k^{'}}t) \stackrel{d}= \lambda^{H}_{k^{'}} X(t) \stackrel{P}\rightarrow \lambda^{H}_0 X(t).
$$
Thus, $D \in G_{\lambda_0}$, and by Lemma \ref{l:G_is_group}, $G$ is a closed subgroup of $GL(m,\bbR)$ in the relative topology, as stated.

Let us turn back to the original sequence $\{\lambda_k\}_{k \in \bbN}$ of scalars associated with $\{D_k\}_{k \in \bbN}$. We claim that there does not exist a convergent subsequence $\{\lambda_{k^*}\} \subseteq \{\lambda_k\}$ such that $\lambda_{k^*} \rightarrow \mu \neq \lambda_0$. Otherwise, $D \in G_{\lambda_0} \cap G_{\mu} = \emptyset$ by Lemma \ref{l:Gr_inter_Gs=empty}, which is a contradiction. As a consequence, for any subsequence $\{\lambda_{k'}\}$, by relative compactness there exists a further subsequence $\{\lambda_{k''}\} \subseteq \{\lambda_{k'}\}$ such that $\lambda_{k''} \rightarrow \lambda_0$. This is equivalent to saying that $\zeta(D_k) = \lambda_{k} \rightarrow \lambda_0 = \zeta(D)$ as $k \rightarrow \infty$, i.e., the mapping $\zeta$ is continuous. $\Box$\\
\end{proof}

%

\begin{lem}\label{l:G1_is_not_a_neighborhood_of_I}
$G^{\textnormal{dom}}_1$ is not a neighborhood of $I$ in $G$.
\end{lem}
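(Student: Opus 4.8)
The plan is to derive a contradiction from the assumption that $G^{\textnormal{dom}}_1$ contains a relative neighborhood of $I$ in $G$. If $G^{\textnormal{dom}}_1 = G_1$ were such a neighborhood, then since $\zeta\colon G \to \bbR_+$ is a continuous homomorphism (Lemmas \ref{l:zeta_is_homomorph} and \ref{l:zeta_is_cont}) with $\zeta^{-1}(1) = G_1$ by Lemma \ref{l:Gr_inter_Gs=empty}, the subgroup $G_1$ would be open in $G$. An open subgroup of a topological group is also closed, so the image $\zeta(G)$ would be a subgroup of $\bbR_+$ that is, in an appropriate sense, discrete relative to the connected structure forced by the scaling relation — yet the definition of $H$-domain operator self-similarity requires \eqref{e:def_Clambda_X} to hold for \emph{every} $\lambda > 0$, so that $\zeta$ is surjective onto $\bbR_+$. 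The first step is therefore to record that $\zeta$ is onto: for each $\lambda > 0$ the set $G_\lambda$ is nonempty by \eqref{e:def_Glambda_X}, hence $\lambda \in \zeta(G)$.

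Next I would push the openness of $G_1$ to a contradiction with surjectivity. If $G_1$ is open in $G$, then each coset $G_\lambda = C_\lambda G_1$ (Lemma \ref{l:Glambda=CS(varphi)}) is open in $G$, being the image of $G_1$ under the homeomorphism $D \mapsto C_\lambda D$ of $G$. By Lemma \ref{l:Gr_inter_Gs=empty} the cosets $\{G_\lambda\}_{\lambda > 0}$ are pairwise disjoint and, by surjectivity of $\zeta$, they cover $G$. Thus $\{G_\lambda : \lambda > 0\}$ is a partition of $G$ into nonempty open sets indexed by the uncountable set $\bbR_+$. This already forces a contradiction: pick any sequence $\lambda_k \to 1$ with $\lambda_k \neq 1$ and, using nonemptiness, choose $D_k \in G_{\lambda_k}$; since $\lambda^{-H} \to I$ as $\lambda \to 1$, one can arrange (using \eqref{e:def_Clambda_X} and, e.g., taking $D_k = C_{\lambda_k}$ with $C_{\lambda_k}$ chosen to converge to $I$ — or more cleanly, by applying the continuity of $\zeta$ in reverse) a sequence in $G$ converging to $I \in G_1$ but with $\zeta(D_k) = \lambda_k \neq 1$, contradicting the continuity of $\zeta$ at $I$ combined with $\zeta(I) = 1$.

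A cleaner route, which I would actually write up, avoids constructing the $D_k$ by hand: suppose for contradiction that $G_1$ is a neighborhood of $I$ in $G$. Then there is a relatively open $O \subseteq G$ with $I \in O \subseteq G_1$. By continuity of $\zeta$ (Lemma \ref{l:zeta_is_cont}), $\zeta(G \setminus G_1) = \zeta(G) \setminus \{1\} = \bbR_+ \setminus \{1\}$ is not what matters; rather, since $G_1 = \ker \zeta$ is open and $\zeta$ is a continuous homomorphism onto the connected group $\bbR_+$, the quotient $G/G_1 \cong \bbR_+$ would be discrete, forcing $\bbR_+$ to be discrete — absurd. The one technical point to verify carefully is that $\zeta$ is genuinely onto $\bbR_+$ and not merely onto some sub-semigroup; this is immediate from the definition of $H$-domain o.s.s., which provides $C_\lambda$ satisfying \eqref{e:def_Clambda_X} for \emph{all} $\lambda > 0$, whence $G_\lambda \neq \emptyset$ for all $\lambda > 0$.

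The main obstacle is conceptual rather than computational: one must make precise the sense in which "$\zeta$ continuous $+$ $\ker\zeta$ open $+$ $\zeta$ onto $\bbR_+$" is contradictory, i.e., that an open subgroup of $G$ cannot have an image as large as the whole connected group $\bbR_+$ under a continuous homomorphism. The clean statement is: if $H \trianglelefteq G$ is open then $G/H$ is discrete, so a continuous surjective homomorphism $G \to \bbR_+$ with open kernel would exhibit $\bbR_+$ as a continuous image that is simultaneously discrete (every point of $\bbR_+$ has open preimage a single coset, and $\zeta$ open would make singletons open) — and $\bbR_+$ has no isolated points. I expect the write-up to be short once this dichotomy is invoked; the bulk of the work has already been done in Lemmas \ref{l:Gr_inter_Gs=empty}--\ref{l:zeta_is_cont}.
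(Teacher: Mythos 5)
Your reduction — ``if $G^{\textnormal{dom}}_1$ were a neighborhood of $I$ it would be open in $G$ (being a subgroup), hence every coset $G_\lambda=C_\lambda G^{\textnormal{dom}}_1$ would be open, and these cosets are nonempty and pairwise disjoint'' — is fine, but neither of the two mechanisms you offer for the final contradiction actually closes the argument. In the sequence route, the entire difficulty sits in the clause ``taking $D_k=C_{\lambda_k}$ with $C_{\lambda_k}$ chosen to converge to $I$'': the sets $G_{\lambda_k}$ are only known to be nonempty, and there is no a priori reason any of their elements lie near $I$; whether points of $G\setminus G^{\textnormal{dom}}_1$ accumulate at $I$ is exactly the statement being proved, and ``continuity of $\zeta$ in reverse'' is not available since no continuous section $\lambda\mapsto C_\lambda$ exists or has been constructed. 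This is precisely where the paper does its real work: it takes $C_k\in G_{1+1/k}$, uses properness, stochastic continuity, $X(0)=0$ a.s.\ and Lemma \ref{e:A^(-1)w_k->0} to show $\{C_k^{-1}\}$ is relatively compact, uses closedness of $G$ (Lemma \ref{l:zeta_is_cont}) and a further limit argument to place the limit $C$ in $G^{\textnormal{dom}}_1$, and only then gets $D_{k'}=C_{k'}^{-1}C\to I$ with $D_{k'}\in G_{r_{k'}^{-1}}$, hence $D_{k'}\notin G^{\textnormal{dom}}_1$ by Lemma \ref{l:Gr_inter_Gs=empty}. In the quotient route, openness of $\ker\zeta$ does make $G/G^{\textnormal{dom}}_1$ discrete, but to transfer discreteness to $\bbR_{+}$ you need the induced bijection to be a homeomorphism, i.e.\ you need $\zeta$ to be an \emph{open} map; continuity gives openness of preimages, which points the wrong way, and your parenthetical ``$\zeta$ open would make singletons open'' assumes exactly the missing ingredient. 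An open-mapping theorem for continuous surjective homomorphisms of locally compact, $\sigma$-compact groups would supply it, but you neither invoke nor verify such a result, and without some hypothesis of this kind the claim is simply false: $\exp$ from the reals with the discrete topology onto $\bbR_{+}$ is a continuous surjective homomorphism with open kernel. You flag this yourself as ``the one technical point,'' but flagging it does not resolve it, so as written the proof has a genuine gap.

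It is worth noting that your partition observation can be completed into a correct and genuinely shorter proof by one ingredient you did not use: $G$ is a subspace of $M(m,\bbR)\cong\bbR^{m^2}$, hence second countable, and a second countable space admits no uncountable family of pairwise disjoint nonempty open sets. Since the $G_\lambda$, $\lambda>0$, are nonempty (by $H$-domain operator self-similarity) and pairwise disjoint (Lemma \ref{l:Gr_inter_Gs=empty}), they cannot all be open, so $G^{\textnormal{dom}}_1$ is not open in $G$ and hence not a neighborhood of $I$. That argument is softer than the paper's, which instead constructs an explicit sequence in $G\setminus G^{\textnormal{dom}}_1$ converging to $I$ via compactness and properness; but in the form you submitted, the contradiction is not validly derived.
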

\begin{proof}
We need to build a sequence of matrices $D_k$ in $G \backslash G^{\textnormal{dom}}_1$ such that $D_k \rightarrow I$ as $k \rightarrow \infty$. So, let $r_k = 1 + \frac{1}{k}$, $k \in \bbN$. Since $G_{r} \neq \emptyset$, we can choose $C_k \in G_{r_k}$ such that
$$
X(C^{-1}_{k}t) \simeq r^{H}_{k}X(t), \quad k \in \bbN.
$$
Assume by contradiction that $\{C^{-1}_{k}\}$ is not relatively compact in the relative topology of $GL(m,\bbR)$. This means that $\norm{C^{-1}_{k}} \rightarrow \infty$, because by Lemma \ref{l:zeta_is_cont}, $G$ is a closed subgroup of $GL(m,\bbR)$ in the same topology. So, by Lemma \ref{e:A^(-1)w_k->0}, there exists a sequence $\{ t_{k'} \} \subseteq S^{m-1}_{\bbR}$ such that $C_{k'} t_{k'}\rightarrow 0$ and $t_{k'}\rightarrow t_0 \neq 0$. By \eqref{e:G^(-1)lambda=G_1/lambda}, stochastic continuity and the assumption that $X(0)=0$ a.s.,
$$
0 = X(0)\stackrel{d}\leftarrow X(C_{k'} t_{k'}) \stackrel{d}= (r^{-1}_{k'})^{H}X(t_{k'}) \stackrel{d}\rightarrow X(t_0), \quad k' \rightarrow \infty.
$$
This contradicts the properness of $X(t_0)$.

As a consequence, there is a subsequence $\{C^{-1}_{k'}\}$ such that $C^{-1}_{k'} \rightarrow C^{-1}$, where $C \in G$ by Lemma \ref{l:zeta_is_cont}. Stochastic continuity then yields
$$
X(t) \stackrel{d}\leftarrow r^{H}_{k'}X(t)\stackrel{d}= X(C^{-1}_{k'}  t) \stackrel{P}\rightarrow  X(C^{-1}t).
$$
This implies that $C \in G^{\textnormal{dom}}_1$. Define $D_{k'}= C^{-1}_{k'}C$. Then, $D_{k'}\rightarrow I$ and $D_{k'} \in G_{r^{-1}_{k'}}$. By Lemma \ref{l:Gr_inter_Gs=empty}, $D_{k'} \notin G^{\textnormal{dom}}_1$, $k' \in \bbN$, which concludes the proof. $\Box$
\end{proof}

\medskip

{\sc Proof of Theorem \ref{t:existence_of_domain_exponents}}: Although our result complements that of  Li and Xiao \cite{li:xiao:2011}, it builds upon domain-based (as opposed to range-based) concepts, and thus is closer in spirit to Meerschaert and Scheffler \cite{meerschaert:scheffler:2001}, Chapter 5. 
By Lemma \ref{l:zeta_is_cont}, $G (\ni I)$ is a subgroup of $GL(m,\bbR)$ which is closed in the relative topology of the latter. Then, the image of $T(G)$ under the exponential map $\exp(\cdot)$ (as defined by the matrix exponential) is a neighborhood of $I$ in $G$ (e.g., see Meerschaert and Scheffler \cite{meerschaert:scheffler:2001}, Proposition 2.2.10.d). By Lemma \ref{l:G1_is_not_a_neighborhood_of_I}, $G^{\textnormal{dom}}_1$ is not a neighborhood of $I$ in $G$; therefore, there exists $A \in T(G)$ such that $e^{A} \notin G^{\textnormal{dom}}_1$. Recall the function $\zeta$ from Lemma \ref{l:zeta_is_homomorph} and define the mapping $\bbR \ni s \mapsto f(s):=\log \zeta(e^{sA})$. Then, by Lemma \ref{l:zeta_is_homomorph},
$$
f(s+r) = \log \zeta(e^{(s+r)A}) = \log \zeta(e^{sA}e^{rA}) = \log \zeta(e^{sA}) + \log \zeta(e^{rA}) = f(s) + f(r),
$$
for $s , r \in \bbR$. Therefore, $f$ is a continuous additive homomorphism. Thus, there exists $\beta \in \bbR$ such that $f(s)=\beta s$ (e.g., see Hudson and Mason \cite{hudson:mason:1982}, p.\ 288). Moreover, if $\beta = 0$, then $e^{1A}= e^{A} \in G^{\textnormal{dom}}_1$, which is a contradiction. Thus, $\beta \neq 0$, and we can take $E:= \beta^{-1}A$ to obtain $\log \zeta(r^{E}) = \log r$ for $r > 0$. Therefore, $r^{E} \in G_r$ for $r > 0$. By Lemma \ref{l:Glambda_welldefined_1}, \eqref{e:o.s.s.} holds. $\Box$\\

{\sc Proof of Theorem \ref{t:E=H+TG_range}}: 
Let
$$
G^{\textnormal{ran}}_\lambda = \{A_\lambda \in GL(n,\bbR): X(\lambda^E t) \simeq A_\lambda X(t)\}, \quad \lambda > 0,
$$
$$
G^{\textnormal{ran}} = \bigcup_{\lambda > 0} G^{\textnormal{ran}}_\lambda.
$$
Since $X$ is $E$-range o.s.s., $G^{\textnormal{ran}}_\lambda \neq \emptyset$, $\lambda > 0$.



By the proof of Theorem 2.1 in Li and Xiao \cite{li:xiao:2011}, p.\ 1190, $G^{\textnormal{ran}}$ is a subgroup of $GL(n,\bbR)$ which is closed in the relative topology. Moreover, by Lemmas 3.3 and 3.5 in Li and Xiao \cite{li:xiao:2011},
\begin{equation}\label{e:zeta_LiXiao_Glambda_are_disjoint}
\xi:G^{\textnormal{ran}} \rightarrow \bbR, \quad \xi(A) = \log(s) \textnormal{ if } A \in G^{\textnormal{ran}}_s.
\end{equation}
 is a well-defined, continuous homomorphism. 

Now define the continuous group mapping ${\mathcal L} :T(G^{\textnormal{ran}}) \rightarrow \bbR$ by the relation
$$
{\mathcal L} (Q) = \log(\xi (\exp(Q))), \quad Q \in T(G^{\textnormal{ran}}).
$$
In view of \eqref{e:zeta_LiXiao_Glambda_are_disjoint}, the mapping ${\mathcal L}$ is well-defined, since $\exp(Q) \in G^{\textnormal{ran}}$ for $Q\in T(G^{\textnormal{ran}})$, see for example Meerschaert and Scheffler \cite{meerschaert:scheffler:2001}, Proposition 2.2.10.c. By the same argument as on p.137 of Meerschaert and Scheffler \cite{meerschaert:scheffler:2001}, the mapping ${\mathcal L} $ is linear; moreover, ${\mathcal L}$ characterizes the tangent space of the symmetry group $G^{\textnormal{ran}}_1$ as $T(G^{\textnormal{ran}}_1) = \{Q \in T(G^{\textnormal{ran}}): {\mathcal L} (Q) = 0\}$. We would like to show that
\begin{equation}\label{e:E(Phi)={L(E)=1}_X_range}
{\mathcal E}^{\textnormal{ran}}_E (X) = \{Q \in T(G^{\textnormal{ran}}): {\mathcal L}(Q) = 1\}.
\end{equation}
For any $H \in {\mathcal E}^{\textnormal{ran}}_{E}(X)$, $X(\lambda^{E}t) \simeq \lambda^{H}X(t)$, $\lambda > 0$.
Therefore, $\lambda^{H} \in G^{\textnormal{ran}}_{\lambda} \subseteq G^{\textnormal{ran}}$, and from the definition \eqref{e:T(G)}, $H \in T(G^{\textnormal{ran}})$. Consequently, $\xi(e^{H \log \lambda}) = \xi(\lambda^{H}) = \lambda$, implying that ${\mathcal L}(H) = \log \xi(\lambda^{H})\Big|_{\lambda = e} = 1$. Now pick $H \in T(G^{\textnormal{ran}})$ such that ${\mathcal L}(H) = 1$. Then, $Hs \in T(G^{\textnormal{ran}})$, $ s \in \bbR$, whence $\exp(Hs)\in G^{\textnormal{ran}}$. Since \eqref{e:zeta_LiXiao_Glambda_are_disjoint} is a continuous homomorphism, the mapping $s \mapsto \xi(\exp(Hs))$ is a continuous additive homomorphism. Therefore, there is some $\beta \in \bbR$ such that $\log \xi(e^{Hs}) = \beta s$. Since $\log \xi(e^{H}) = 1$, then $\beta = 1$. Therefore, $\log \xi(\exp(H \log \lambda)) = \log \lambda$, whence $\lambda^{H} \in G^{\textnormal{ran}}_{\lambda}$, $\lambda > 0$. In other words, $H \in {\mathcal E}^{\textnormal{ran}}_H(X)$. This proves \eqref{e:E(Phi)={L(E)=1}_X_range}.

By the linearity of ${\mathcal L}$, for any $H$ such that ${\mathcal L}(H) = 1$, ${\mathcal L}(Q) = {\mathcal L}(H) + {\mathcal L}(Q-H)$, where ${\mathcal L}(Q-H) = 0$. This yields
$$
\{Q \in T(G^{\textnormal{ran}}): {\mathcal L}(Q)=1\} = H + \{Q \in T(G^{\textnormal{ran}}): {\mathcal L}(Q)=0\},
$$
which establishes the relation (\ref{e:E=H+TG_range}).
%
%
%

To prove the existence of a commuting exponent, let $A \in G^{\textnormal{ran}}_1$, $H \in {\mathcal E}^{\textnormal{ran}}_E(X)$. A simple computation shows that $\lambda^{AHA^{-1}}=A\lambda^HA^{-1}$, and since $A^{-1}X(t) \simeq X(t)$, it follows that $\lambda^{AHA^{-1}}X(t)=A\lambda^H X(t)\simeq AX(\lambda^E t)\simeq X(\lambda^E t)\simeq \lambda^H X(t)$. Then,
\begin{equation}\label{e:ABA^(-1)=EX_range}
AHA^{-1} \in {\mathcal E}^{\textnormal{ran}}_E(X).
\end{equation}
Let
$$
H_0 = \int_{A \in G^{\textnormal{ran}}_1}AHA^{-1}{\mathbf H}(dA),
$$
where ${\mathbf H}$ is the Haar measure on the compact group $G^{\textnormal{ran}}_1$, so that ${\mathbf H}(S\,dA)={\mathbf H}(dA)$ for any $S\in  G^{\textnormal{ran}}_1$. By the established relation (\ref{e:E=H+TG_range}), ${\mathcal E}^{\textnormal{ran}}_E(X)$ is closed and convex. So, from \eqref{e:ABA^(-1)=EX_range}, we conclude that $H_0 \in {\mathcal E}^{\textnormal{ran}}_E(X)$. Moreover, it is easy to check (compare Meerschaert and Scheffler \cite{meerschaert:scheffler:2001}, p.138) that $AH_0A^{-1} = H_0$ for $A \in G^{\textnormal{ran}}_1$, whence \eqref{e:B0_A=AB_0_X_range} follows.
The last statement is akin to Theorem 5.2.14, Meerschaert and Scheffler \cite{meerschaert:scheffler:2001}, and can be proved in the same way. $\Box$\\

{\sc Proof of Theorem \ref{t:domain_Exponents}}: The proof is similar to Meerschaert and Scheffler \cite{meerschaert:scheffler:2001}, pp.137--138. We outline the main steps, and point out some minor differences.

Recall the definitions of $G$ and $G_{\lambda}$ in expressions \eqref{e:G} and \eqref{e:def_Glambda_X}, respectively, and the mapping $\zeta(\cdot)$ from Lemma \ref{l:HM_lemma6.1_analog}.   As in the proof of Theorem \ref{t:E=H+TG_range}, define the continuous group mapping $L :T(G) \rightarrow \bbR$ by the relation
$$
L (B) = \log(\zeta (\exp(-B))), \quad B \in T(G).
$$
By the same argument as on p.137 of Meerschaert and Scheffler \cite{meerschaert:scheffler:2001}, the mapping $L $ is linear; moreover, $L$ characterizes the tangent space of the symmetry group $G^{\textnormal{dom}}_1$ in the sense that $T(G^{\textnormal{dom}}_1) = \{B \in T(G): L (B) = 0\}$. We need to characterize the set of all exponents in terms of the function $L(\cdot)$, namely, we will show that
\begin{equation}\label{e:E(Phi)={L(E)=1}_X}
{\mathcal E}^{\textnormal{dom}}_H(X) = \{B \in T(G): L(B) = 1\}.
\end{equation}
The argument resembles that for establishing \eqref{e:E(Phi)={L(E)=1}_X_range}, but we lay it out for the reader's convenience. For any $B \in {\mathcal E}^{\textnormal{dom}}_H(X)$, $X(\lambda^{B}t) \simeq \lambda^{H}X(t)$, $\lambda > 0$. Therefore, $\lambda^{-B} \in G_{\lambda} \subseteq G$. Consequently, $\zeta (e^{-B \log(\lambda)}) = \zeta (\lambda^{-B})=\lambda$, implying that $L(B) = \log( \zeta(\lambda^{-B}) )\Big|_{\lambda = e} = 1$. Now pick $B \in T(G)$ such that $L(B) = 1$. Then, $-B s \in T(G)$, $s \in \bbR$, whence $\exp(-B s) \in G$, and $\zeta(\exp(-B s))$ is well defined. As in the proof of Theorem \ref{t:existence_of_domain_exponents}, Lemmas \ref{l:zeta_is_homomorph} and \ref{l:zeta_is_cont} imply that the mapping $s \mapsto \log(\zeta(\exp(-B s)))$ is a continuous additive homomorphism; therefore, there exists $\beta \in \bbR$ such that $\log(\zeta(e^{-B s})) = \beta s$. Since $\log(\zeta(e^{-B})) = 1$, then $\beta = 1$. Therefore, $\log(\zeta(\exp(-B \log(\lambda))))= \log(\lambda)$, whence $\lambda^{- B} \in G_{\lambda}$, $\lambda > 0$. In other words, $B \in {\mathcal E}^{\textnormal{dom}}_H(X)$. This proves \eqref{e:E(Phi)={L(E)=1}_X}.

By the linearity of $L$, for any $E$ such
that $L(E) = 1$, $L(B) = L(E) + L(B-E)$, where $L(B-E) = 0$. This yields
$$
\{B \in T(G): L(B)=1\} = E + \{B \in T(G): L(B)=0\},
$$
which establishes the relation (\ref{e:E(varphi)=B+T(S(varphi))}).

We now prove the existence of a commuting exponent. Notice that for any $A \in G^{\textnormal{dom}}_1$, $B \in {\mathcal E}^{\textnormal{dom}}_H(X)$,
$X(\lambda^{AB A^{-1}}t) \simeq X(\lambda^{B}t) \simeq \lambda^{H}X(t)$, $\lambda > 0$, so that
\begin{equation}\label{e:ABA^(-1)=E(varphi)_X}
AB A^{-1} \in {\mathcal E}^{\textnormal{dom}}_H(X).
\end{equation}
Let
$$
B_0 = \int_{A \in G^{\textnormal{dom}}_1}AB A^{-1}{\mathbf H}(dA),
$$
where ${\mathbf H}$ is the Haar measure on the compact group $G^{\textnormal{dom}}_1$. By the relation (\ref{e:E(varphi)=B+T(S(varphi))}), ${\mathcal E}^{\textnormal{dom}}_H(X)$ is closed and convex. So, from \eqref{e:ABA^(-1)=E(varphi)_X}, we conclude that $B_0 \in {\mathcal E}^{\textnormal{dom}}_H(X)$. Moreover, the same argument as in Meerschaert and Scheffler \cite{meerschaert:scheffler:2001}, p.138, yields $AB_0A^{-1} = B_0$, from which \eqref{e:B0_A=AB_0_X} follows.

The last statement is akin to Theorem 5.2.14, Meerschaert and Scheffler \cite{meerschaert:scheffler:2001}, p.\ 139, and can be proved in the same way. $\Box$\\

{\sc Proof of Corollary \ref{c:relation_between_domain_and_range_exponents}}: Equation \eqref{e:relation_between_range_exponents} follows easily from \eqref{e:E=H+TG_range}, and equation \eqref{e:relation_between_domain_exponents} is a direct result of \eqref{e:E(varphi)=B+T(S(varphi))}.  $\Box$\\

Finally we come to the proof of Theorem \ref{t:X=X1+X2}, where we relax the assumption that every eigenvalue of $H$ has positive real part. For this purpose, in the sequel we will state and prove Proposition \ref{p:minRe(E)=0<=>minRe(H)=0} and Lemmas \ref{l:invariant_law_Re=0}--\ref{l:HM_lemma6.1_analog}.

\begin{prop}\label{p:minRe(E)=0<=>minRe(H)=0}
Suppose $X$ is a proper, stochastically continouous random vector field that satisfies the scaling relation \eqref{e:o.s.s.}.  Then:
\begin{itemize}
\item [(i)] There is no pair of eigenvalues $e$ and $h$ for $E$ and $H$, respectively, whose real parts have opposite signs;
\item [(ii)] If every eigenvalue of $H$ has positive real part, then every eigenvalue of $E$ has positive real part;
\item [(iii)] If $X(0)=0$ a.s., and if every eigenvalue of $E$ has positive real part, then every eigenvalue of $H$ has positive real part.
\end{itemize}
\end{prop}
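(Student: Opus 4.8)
The plan is to prove all three parts with the same device: a ``wrong--sign'' eigenvalue is made to produce, through the scaling relation \eqref{e:o.s.s.} and stochastic continuity, either a point $t_*\neq 0$ with $X(t_*)=0$ a.s., or a complex scalar functional of $X$ that is simultaneously tight and forced to blow up; in every case this contradicts properness. I use freely Theorem 2.2.4 of Meerschaert and Scheffler \cite{meerschaert:scheffler:2001}: if $t_0$ lies in the real generalized eigenspace of $E$ associated with an eigenvalue of real part $\rho$, then $\|c^{E}t_0\|$ is comparable to $c^{\rho}$ up to powers of $|\log c|$, so $c^{E}t_0\rightarrow 0$ as $c\rightarrow\infty$ when $\rho<0$ and as $c\downarrow 0$ when $\rho>0$; likewise $\|c^{H}\|\rightarrow 0$ as $c\downarrow 0$ whenever all eigenvalues of $H$ have positive real part. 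I also use that if $\theta\in\bbC^n\setminus\{0\}$ satisfies $\theta^*H=h\theta^*$ then $\theta^*c^{H}=c^{h}\theta^*$, and that $\theta^*X(t_0)=0$ a.s.\ is impossible for $t_0\neq 0$: writing $\theta=a+ib$ with $a,b\in\bbR^n$ not both zero, this would confine $X(t_0)$ to one of the genuine hyperplanes $\{x:a^*x=0\}$, $\{x:b^*x=0\}$, contradicting properness.

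For (i), suppose $E$ has an eigenvalue $e$ and $H$ an eigenvalue $h$ with $\mathrm{Re}(e)<0<\mathrm{Re}(h)$; the opposite case $\mathrm{Re}(e)>0>\mathrm{Re}(h)$ is identical upon letting $c\downarrow 0$ instead of $c\rightarrow\infty$. Fix $t_0\neq 0$ in the real generalized eigenspace of $E$ for $e$ and $\theta\in\bbC^n\setminus\{0\}$ with $\theta^*H=h\theta^*$. Then \eqref{e:o.s.s.} gives $\theta^*X(c^{E}t_0)\stackrel{d}= c^{h}\theta^*X(t_0)$, hence $|\theta^*X(c^{E}t_0)|\stackrel{d}= c^{\mathrm{Re}(h)}|\theta^*X(t_0)|$. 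As $c\rightarrow\infty$ we have $c^{E}t_0\rightarrow 0$, so stochastic continuity makes $\theta^*X(c^{E}t_0)$ converge in probability to $\theta^*X(0)$, hence the left side is tight; but $c^{\mathrm{Re}(h)}\rightarrow\infty$ while $\bbP(\theta^*X(t_0)\neq 0)>0$, so $c^{\mathrm{Re}(h)}|\theta^*X(t_0)|$ is not tight --- a contradiction.

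For parts (ii) and (iii) one observes first that, by part (i) and the respective hypothesis, no eigenvalue of $E$ (resp.\ of $H$) can have negative real part, so it remains only to exclude eigenvalues with zero real part. For (ii): were $i\omega$ an eigenvalue of $E$, choose a complex eigenvector $v=v_1+iv_2$; the real subspace $U:=\mathrm{span}_{\bbR}\{v_1,v_2\}$ is nonzero and $E$-invariant, and $E|_U$ is semisimple with purely imaginary spectrum, so $\{c^{E}u:c>0\}$ is bounded and bounded away from $0$ for every $0\neq u\in U$. Pick such a $t_0$ and a sequence $c_k\downarrow 0$; a subsequence of $\{c_k^{E}t_0\}$ converges to some $t_*\neq 0$, so stochastic continuity gives $X(c_{k_j}^{E}t_0)\stackrel{P}\rightarrow X(t_*)$, while \eqref{e:o.s.s.} and $\|c^{H}\|\rightarrow 0$ give $X(c_{k_j}^{E}t_0)\stackrel{d}= c_{k_j}^{H}X(t_0)\stackrel{P}\rightarrow 0$; hence $X(t_*)=0$ a.s., contradicting properness. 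For (iii), where $X(0)=0$ a.s.: were $h$ an eigenvalue of $H$ with $\mathrm{Re}(h)=0$, pick $\theta\in\bbC^n\setminus\{0\}$ with $\theta^*H=h\theta^*$ and any $t_0\neq 0$; then $|\theta^*X(c^{E}t_0)|\stackrel{d}= c^{\mathrm{Re}(h)}|\theta^*X(t_0)|=|\theta^*X(t_0)|$ for all $c>0$, whereas $c^{E}t_0\rightarrow 0$ as $c\downarrow 0$, so stochastic continuity and $X(0)=0$ force $|\theta^*X(c^{E}t_0)|\stackrel{P}\rightarrow 0$; thus $\theta^*X(t_0)=0$ a.s., a contradiction.

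The main obstacle is the zero-real-part case, chiefly in part (ii): one cannot merely pick an eigenvector and hope its $c^{E}$-orbit is controlled, because Jordan blocks (or a purely imaginary eigenvalue with nontrivial nilpotent part) can make $\|c^{E}t_0\|$ blow up as $c\downarrow 0$. The remedy is to restrict to the minimal real invariant subspace spanned by the real and imaginary parts of a single genuine eigenvector, where $E$ acts as a pure rotation (or as $0$), so that the orbit stays bounded and bounded away from $0$ and Bolzano--Weierstrass applies. In part (iii) the parallel difficulty --- that $X$ is controlled only at the limit point $0$ --- is handled by passing to the complex functional $\theta^*X$, whose modulus is exactly scaling-invariant when $\mathrm{Re}(h)=0$. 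What remains is routine: making precise the ``tight versus blows up'' dichotomy, and verifying that $\theta^*X(t_0)=0$ a.s.\ with $\theta\neq 0$ indeed confines $X(t_0)$ to a proper hyperplane of $\bbR^n$.
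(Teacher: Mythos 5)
Your proof is correct, and while the overall strategy (contradicting properness via the scaling law plus stochastic continuity) is the same as the paper's, your implementation of parts (i) and (iii) takes a genuinely different and somewhat cleaner route. Where the paper proves (i) by passing to the Jordan form of $H$, projecting $P^{-1}X$ onto the Jordan block of the offending eigenvalue, and producing a unit-sphere sequence $t_k$ with $c_k^{-E}t_k\to 0$ via its operator-norm lemma, you scalarize the problem with a left eigenvector $\theta^*H=h\theta^*$, so that $\theta^*c^H=c^h\theta^*$ and everything reduces to the one-dimensional dichotomy ``tight versus $c^{\Re(h)}\to\infty$''; choosing $t_0$ in the generalized eigenspace of $E$ for the wrong-sign eigenvalue replaces the sphere-sequence argument by a deterministic limit $c^Et_0\to 0$. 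In (iii) the paper pairs $X$ with $c_k^{H^*}x_0$ and extracts a convergent subsequence of the $c^{H^*}$-orbit; your exact eigenvector makes $|c^h|=1$, so the same dual idea goes through with no compactness step. Part (ii) follows the paper's argument, and your explicit restriction to the real span of a genuine eigenvector of $E$ for $i\omega$ is a useful sharpening: the paper's claim that $\{c^Et_0:0<c<c_0\}$ is relatively compact is not automatic for an arbitrary $t_0$ (a nontrivial Jordan block over $i\omega$ would spoil it), and your choice is precisely what justifies it. You also correctly route the exclusion of negative real parts in (ii) and (iii) through part (i), and your observation that $\theta^*X(t_0)=0$ a.s.\ would confine $X(t_0)$ to a proper hyperplane (via the real and imaginary parts of $\theta$) is exactly the right way to connect the complex functional back to fullness.
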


\begin{proof}
($i$) Without loss of generality, assume by contradiction that there are eigenvalue $e$ and $h$ of $E$ and $H$, respectively, such that $\Re(e) > 0$ and $\Re(h) < 0$; otherwise, we can pick the pair of exponents $(-E,-H)$, instead. Let $\{c_k\}_{k \in \bbN}$ be a sequence of positive numbers such that $c_k \rightarrow \infty$. Then, $\|c^E_k\| \rightarrow \infty$, since the eigenvalue $c_k^e$ of $c^{E}_k$ goes to infinity in ${\mathbb C}$. By Lemma \ref{e:A^(-1)w_k->0}, there is a subsequence $\{t_{k'}\} \subseteq S^{m-1}_{\bbR}$ such that $c^{-E}_{k'}t_{k'} \rightarrow 0$. Choose a further subsequence $\{t_{k''}\}$ such that $ t_{k''} \rightarrow t_0$, for some $t_0 \in S^{m-1}_{\bbR}$. For notational simplicity, we drop the superscript and write $k$. By operator self-similarity, $c^{H}_k X(c^{-E}_k t_k) \stackrel{d}= X(t_k)$. The Jordan form $H = PJ_H P^{-1}$ yields
$$
c^{J_H}_k P^{-1}X(c^{-E}_k t_k) \stackrel{d}= P^{-1}X(t_k).
$$
Let $Y(c^{-E}_k t_k) = P^{-1}X(c^{-E}_k t_k) \in \bbC^n$. There is a $j \times j$ Jordan block $J_h$ in $J_H$ associated with the eigenvalue $h$; for simplicity, we can assume that $J_h$ occupies the upper left $j \times j$ block in $J_H$. Let $\pi_{\leq j}$ be the projection operator onto the first $j$ entries of a vector in $\bbC^n$. By the continuity in probability of the random field $X$, $Y(c^{-E}_k t_k) \stackrel{P}\rightarrow  P^{-1}X(0)$, $k \rightarrow \infty$. Since $\Re(h) < 0$,
$$
0 \stackrel{P}\leftarrow \pi_{\leq j}[c^{J_H}_k Y(c^{-E}_k t_k)] \stackrel{d}= \pi_{ \leq j}[P^{-1}X(t_k)] \stackrel{P}\rightarrow \pi_{\leq j}[P^{-1}X(t_0)] ,
$$
which contradicts the properness of $X(t_0)$. \\

($ii$) Suppose that $e=ib$ is an eigenvalue of $E$ with zero real part. The Jordan form of the matrix exponential $c^{E} = Pc^{J_E}P^{-1}$, $P \in GL(n,\bbC)$, reveals that $c^E$ cannot converge to 0 as $c \rightarrow 0^+$, since its eigenvalue $c^{e} = c^{i b}$ remains bounded from below (and above). Therefore, there exist $c_0,m>0$ and $t_0\neq 0$ such that $\|c^{E}t_0\| > m$ for all $0<c<c_0$.  Since $\{c^{E}t_0:0<c<c_0\}$ is relatively compact, there exists a sequence $c_k\to 0$ such that $c^{E}_k t_0\to t_1\neq 0$, and then $X(c_k^{E}t_0)\to X(t_1)$ in distribution, where $X(t_1)$ is full. If every eigenvalue of $H$ has positive real part, then $\|c^H\|\to 0$ as $c\to 0$, and hence $c^{H} X(t_0) \to 0$ in probability, which is a contradiction.\\

($iii$) If every eigenvalue of $E$ has positive real part, then $\|c^E\|\to 0$ as $c\to 0$, and hence $X(c^E t) \to 0$ in probability as $c\to 0$ for any $t\in\bbR^m$.  Suppose that $h=ia$ is an eigenvalue of $H$ with zero real part, and hence also an eigenvalue of the transpose $H^*$, the linear operator such that the inner product relation $\langle H x,y\rangle=\langle x,H^*y\rangle$ holds for all $x,y\in\bbR^n$.  As in ($ii$), it follows that there exists a vector $x_0$ and a sequence $c_k\to 0$ such that $c_k^{H^*}x_0\to x_1\neq 0$.  Then $\langle x_0,c_k^{H}X(t)\rangle=\langle c_k^{H^*}x_0,X(t)\rangle\to\langle x_1,X(t)\rangle$ in distribution, and since $X(t)$ is full, we arrive at a contradiction. $\Box$\\
\end{proof}

For the next lemma, recall that $O(n)$ denotes the orthogonal group in $GL(n,\bbR)$.
\begin{lem}\label{l:invariant_law_Re=0}
Let $H \in M(n,\bbR)$ be a diagonalizable matrix (over ${\mathbb C}$) whose eigenvalues have zero real parts. Then, there exists a Gaussian random vector $X$ such that
\begin{equation}\label{e:r^H_X_d=_X}
r^{H}X \stackrel{d}= X, \quad r > 0.
\end{equation}
\end{lem}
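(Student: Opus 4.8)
The plan is to produce a centered Gaussian law $N(0,\Sigma)$ whose covariance $\Sigma$ is invariant under the scaling operators, in the sense that $r^{H}\Sigma(r^{H})^{*}=\Sigma$ for every $r>0$. Indeed, if $X\sim N(0,\Sigma)$ then $r^{H}X$ is centered Gaussian with covariance $r^{H}\Sigma(r^{H})^{*}$, so such a $\Sigma$ immediately delivers $r^{H}X\stackrel{d}=X$ for all $r>0$. Thus the whole task reduces to constructing a symmetric positive definite $\Sigma\in M(n,\bbR)$ that is fixed by the conjugation action $A\mapsto r^{H}A(r^{H})^{*}$.

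To build $\Sigma$ I would first note that diagonalizability is exactly what makes the family $\{r^{H}:r>0\}$ relatively compact in $GL(n,\bbR)$. Writing $H=QDQ^{-1}$ with $Q\in GL(n,\bbC)$ and $D=\mathrm{diag}(i\theta_{1},\dots,i\theta_{n})$ (the eigenvalues being purely imaginary by hypothesis), one gets $r^{H}=Q\,\mathrm{diag}(r^{i\theta_{1}},\dots,r^{i\theta_{n}})\,Q^{-1}$, where each $|r^{i\theta_{j}}|=|e^{i\theta_{j}\log r}|=1$; hence $\|r^{H}\|\le\|Q\|\,\|Q^{-1}\|$ and likewise $\|r^{-H}\|=\|(r^{H})^{-1}\|\le\|Q\|\,\|Q^{-1}\|$, uniformly in $r>0$. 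Therefore the abelian one-parameter group $\{r^{H}:r>0\}=\{e^{sH}:s\in\bbR\}$ lies in a compact subset of $GL(n,\bbR)$, its closure $K$ is compact, and $K$ is a subgroup of $GL(n,\bbR)$ since the closure of a subgroup of a topological group is a subgroup. Let $\mathbf{H}$ be the normalized Haar measure on the compact group $K$ and set
\[
\Sigma:=\int_{K}gg^{*}\,\mathbf{H}(dg).
\]

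Then I would check the two properties of $\Sigma$. For positive definiteness, for any $v\neq 0$ we have $v^{*}\Sigma v=\int_{K}\|g^{*}v\|^{2}\,\mathbf{H}(dg)>0$, because each $g\in K$ is invertible, so the continuous integrand $\|g^{*}v\|^{2}$ is strictly positive. For the invariance, fix $r>0$; since $r^{H}\in K$ and $\mathbf{H}$ is translation invariant,
\[
r^{H}\Sigma(r^{H})^{*}=\int_{K}(r^{H}g)(r^{H}g)^{*}\,\mathbf{H}(dg)=\int_{K}gg^{*}\,\mathbf{H}(dg)=\Sigma.
\]
Taking $X\sim N(0,\Sigma)$ then yields $r^{H}X\stackrel{d}=X$ for all $r>0$, which is \eqref{e:r^H_X_d=_X}.

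The only step beyond bookkeeping — and the place where the hypothesis is genuinely used — is the relative compactness of $\{r^{H}:r>0\}$: without diagonalizability the nontrivial Jordan blocks force $\|r^{H}\|$ to grow polynomially in $|\log r|$ and the closure is no longer compact, so the averaging trick breaks. If one prefers to avoid Haar measure altogether, the same $\Sigma$ can be obtained explicitly by passing to a real canonical form $P^{-1}HP=J$, with $J$ block diagonal built from $2\times2$ blocks $\left(\begin{smallmatrix}0 & -\theta\\ \theta & 0\end{smallmatrix}\right)$ and a zero block, so that $r^{J}\in O(n)$; then $\Sigma:=PP^{*}$ satisfies $r^{H}\Sigma(r^{H})^{*}=P\,r^{J}(r^{J})^{*}P^{*}=PP^{*}=\Sigma$. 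Either construction finishes the proof.
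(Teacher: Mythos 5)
Your argument is correct. The reduction to finding a symmetric positive definite $\Sigma$ with $r^{H}\Sigma(r^{H})^{*}=\Sigma$ is sound, the uniform bound $\|r^{\pm H}\|\le\|Q\|\,\|Q^{-1}\|$ correctly exploits diagonalizability with purely imaginary eigenvalues to get a compact closure $K$ of $\{r^{H}:r>0\}$ in $GL(n,\bbR)$, and the Haar average $\Sigma=\int_{K}gg^{*}\,\mathbf{H}(dg)$ is indeed positive definite and invariant under left multiplication by each $r^{H}$. The route differs from the paper's: there, one passes directly to the real Jordan form $H=PJ_{H,\bbR}P^{-1}$, whose blocks are either zero or $2\times2$ rotation generators, so that $r^{J_{H,\bbR}}\in O(n)$, and one sets $X=PZ$ with $Z\sim N(0,I)$, concluding from the orthogonal invariance $OZ\stackrel{d}=Z$; no Haar measure or compact-group machinery is needed. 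Your averaging argument is more abstract and more robust — it would produce an invariant Gaussian law for any family of operators with compact closure, without computing a canonical form — whereas the paper's construction is more elementary and explicit, and (as its remark notes) extends verbatim to any spherically distributed $Z$ in place of the Gaussian. Your closing alternative, $\Sigma=PP^{*}$ via the real canonical form, is in fact the paper's construction written at the level of covariances, so you have effectively recovered both proofs.
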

\begin{proof}
The proof is by construction. By the Jordan decomposition of $H$ over the field $\bbR$ (see Meerchaert and Scheffler \cite{meerschaert:scheffler:2001}, Theorem 2.1.16), there exists a conjugacy $P \in GL(n,\bbR)$ such that $H = P J_{H,\bbR}P^{-1}$, where $J_{H,\bbR} = \textnormal{diag}(J_1,\hdots,J_q)$.
Each block $J_j$, $j = 1,\hdots,q$, is either the scalar zero or has the form
$$
J_j
= \left(\begin{array}{cc}
0 & - \theta_j\\
\theta_j & 0
\end{array}\right) .
$$
Therefore, $\exp\{ c \hspace{0.5mm}\textnormal{diag}(J_1,\hdots,J_q) \} \in O(n)$ for any $c \in \bbR$. In particular, this holds for $c = 1$. Now let $X = PZ$, where $Z \sim N(0,I)$. Then, \eqref{e:r^H_X_d=_X} holds, since
\begin{equation}\label{e:OZ=d_Z}
O Z \stackrel{d}= Z \textnormal{ for any $O \in O(n)$}. \quad \Box
\end{equation}
\end{proof}
\begin{remark}
In Lemma \ref{l:invariant_law_Re=0}, the Gaussian distribution is not essential. The argument holds with any random vector $Z$ displaying a spherical distribution, namely, one that satisfies \eqref{e:OZ=d_Z}. For example, for $n = 2$, $Z$ can have density $f_Z(z) = C (1 + \|z\|^{\beta})^{-1}$ for a normalizing constant $C > 0$ and some $\beta > 2$, where $\|\cdot\|$ denotes the Euclidean norm.
\end{remark}

\begin{lem}\label{l:HM_lemma6.3_analog}
Assume that $X$ is a proper, stochastically continuous,  random vector field that satisfies the scaling relation \eqref{e:o.s.s.} for some $E$ whose eigenvalues all have positive real part.   Let $f$ be the minimal polynomial of $H$, and write
\begin{equation}\label{e:f=gh}
f = f_1f_2,
\end{equation}
where the roots of $f_1$ have zero real part, and the roots of $f_2$ have positive real part. Write the direct sum decomposition $\bbR^n = V_1 \oplus V_2$ where $V_1 = \textnormal{Ker} \hspace{1mm}f_1(H)$, $V_2 = \textnormal{Ker} \hspace{1mm}f_2(H)$. Write  $X=X_1+X_2$ and $H=H_1\oplus H_2$ with respect to this direct sum decomposition.  Then, $X_2$ is a proper ($E,H_2$)--o.s.s.\ random field on $V_2$.
\end{lem}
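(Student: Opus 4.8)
The plan is to transfer each defining property of an $(E,H_2)$-o.s.s.\ field from $X$ to its $V_2$-component $X_2=\pi_2 X$, where $\pi_2$ denotes the fixed (non-random) linear projection of $\bbR^n$ onto $V_2$ along $V_1$. The structural input is that $\bbR^n=V_1\oplus V_2$ is a decomposition into $H$-invariant subspaces \emph{over $\bbR$}: since $f_1$ and $f_2$ are coprime and $f_1f_2=f$ annihilates $H$, a B\'ezout identity $a(H)f_1(H)+b(H)f_2(H)=I$ yields the direct sum and shows each $V_i$ is invariant under every polynomial in $H$, hence under $c^H$. Thus $c^H=c^{H_1}\oplus c^{H_2}$ with respect to this decomposition, where $H_i=H|_{V_i}$ has minimal polynomial dividing $f_i$; in particular the eigenvalues of $H_1$ have zero real part and those of $H_2$ have strictly positive real part, and the projections satisfy $\pi_i c^H=c^{H_i}\pi_i$.

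First I would obtain the scaling relation and stochastic continuity. Applying the fixed linear map $\pi_2$ coordinatewise preserves equality of finite-dimensional distributions, so from \eqref{e:o.s.s.} and $\pi_2 c^H=c^{H_2}\pi_2$,
$$
\{X_2(c^E t)\}_{t\in\bbR^m}=\{\pi_2 X(c^E t)\}_{t\in\bbR^m}\simeq\{\pi_2 c^H X(t)\}_{t\in\bbR^m}=\{c^{H_2}X_2(t)\}_{t\in\bbR^m},\qquad c>0.
$$
Stochastic continuity of $X_2$ follows from that of $X$ and the continuity of $\pi_2$. Setting $t=0$ gives $X_2(0)\stackrel{d}{=}c^{H_2}X_2(0)$ for all $c>0$; since every eigenvalue of $H_2$ has positive real part, $\|c^{H_2}\|\to 0$ as $c\to 0$ by Theorem 2.2.4 in Meerschaert and Scheffler \cite{meerschaert:scheffler:2001}, hence $X_2(0)=0$ a.s.

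Next I would verify properness and nondegeneracy of $X_2$ as a field on $V_2$. If $X_2(t_0)$ were not full in $V_2$ for some $t_0\neq 0$, there would be $\psi\in V_2^{*}\setminus\{0\}$ with $\psi(X_2(t_0))$ a.s.\ constant; then $\phi:=\psi\circ\pi_2\in(\bbR^n)^{*}$ is nonzero (it is nonzero on $V_2=\textnormal{range}(\pi_2)$) and $\phi(X(t_0))=\psi(X_2(t_0))$ is a.s.\ constant, so $X(t_0)$ is supported on a proper hyperplane, contradicting properness of $X$ at $t_0\neq 0$. For nondegeneracy: if a nontrivial projection $P$ on $\bbR^m$ satisfied $\{X_2(Pt)\}_{t\in\bbR^m}\simeq\{X_2(t)\}_{t\in\bbR^m}$, choosing $0\neq t^{*}\in\textnormal{Ker}\,P$ gives $X_2(t^{*})\stackrel{d}{=}X_2(Pt^{*})=X_2(0)=0$ a.s., contradicting the properness just shown (the case $V_2=\{0\}$ being vacuous). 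Combining these facts, $X_2$ is proper, nondegenerate, stochastically continuous, and satisfies \eqref{e:o.s.s.} with exponents $(E,H_2)$, i.e.\ it is $(E,H_2)$-o.s.s.

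I do not anticipate a serious obstacle: the lemma is essentially a bookkeeping result that splits off the "positive real part" part of the range exponent. The one point requiring care is that the decomposition $\bbR^n=V_1\oplus V_2$ must be taken over $\bbR$ and be $H$-invariant, so that the components $X_1$ and $X_2$ are not mixed by the scaling $c^H$; this is exactly what makes $\pi_2 c^H=c^{H_2}\pi_2$ hold, which is the crux of the scaling step. Everything else is a routine transfer of finite-dimensional-distribution identities through the fixed linear map $\pi_2$, together with the elementary observation that functionals annihilating $V_1$ detect non-fullness of $X_2$ inside $\bbR^n$.
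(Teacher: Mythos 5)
Your proposal is correct and follows essentially the same route as the paper: the crux in both is the $H$-invariant splitting $\bbR^n=V_1\oplus V_2$ from the primary (minimal-polynomial) decomposition and the resulting commutation $\pi_2 c^H=c^{H_2}\pi_2$, through which the scaling relation and stochastic continuity are transferred to $X_2=\pi_2 X$. You merely spell out the properness and nondegeneracy checks (via a linear functional $\psi\circ\pi_2$ and a kernel vector of a nontrivial projection) that the paper's proof asserts without detail, which is a harmless and correct elaboration.
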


\begin{proof}
Let $\pi_2:\bbR^n\to V_2$ denote the projection operator defined by $\pi_2(v)=v_2$, where $v = v_1 + v_2$, for some unique $v_1 \in V_1$ and $v_2 \in V_2$ by the direct sum decomposition. Then $H v =  Hv_1 + H v_2=H_1 v + H_2 v$ . Hence $\pi_2(Hv) = H v_2= H_2 v$, which leads to the commutativity relation $\pi_{2}c^{H} = c^{H_2}\pi_{2}$.  This in turn implies that
$$
\{\pi_2 X(c^E t)\}_{t \in \bbR^m} \simeq \{c^{H_2}\pi_2 X(t)\}_{t \in \bbR^m}.
$$
Therefore, $X_2=\pi_{2}X$ is a proper, stochastically continuous ($E,H_2$)--o.s.s.\ random field on $V_2$. $\Box$\\
\end{proof}

The next lemma uses non-Euclidean polar coordinates as in Jurek and Mason \cite{Jurek1993}, Proposition 3.4.3, see also Meerschaert and Scheffler \cite{meerschaert:scheffler:2001} and Bierm\'{e} et al.\ \cite{bierme:meerschaert:scheffler:2007}. Suppose the real parts of the eigenvalues of $E \in M(m,\bbR)$ are positive. Then, there exists a norm $\| \cdot \|_0$ on $\bbR^m$ for which
\begin{equation}\label{e:Phi_change-of-variables}
\Psi:(0,\infty) \times S_0 \rightarrow \bbR^m \backslash\{0\},
\quad \Psi(r,\theta):=r^{E}\theta,
\end{equation}
is a homeomorphism, where $S_0 = \{x \in \bbR^m: \norm{x}_{0}=1\}$. One can then uniquely write the polar coordinates representation
\begin{equation}\label{e:x=tau(x)E*l(x)}
\bbR^m \backslash\{0\}\ni x = \tau_E(x)^E l_E(x),
\end{equation} where $\tau_E(x) > 0$, $l_E(x) \in S_0$ are called the radial and directional parts, respectively. One such norm $\| \cdot \|_0$ may be calculated explicitly by means of the expression
\begin{equation}\label{e:E0_norm}
\norm{x}_{0} = \int^{1}_{0} \norm{t^{E}x}_{*}\frac{dt}{t},
\end{equation}
where $\| \cdot \|_{*}$ is any norm in $\bbR^m$.
The uniqueness of the representation (\ref{e:x=tau(x)E*l(x)}) yields
\begin{equation}\label{e:tau_E}
\tau_E(c^E x) = c \tau_E(x), \quad l_E(c^E x) = l_E(x).
\end{equation}

\begin{lem}\label{l:admissibility_of_H}
Suppose that every eigenvalue of $E$ has positive real part.  Then the following are equivalent:
\begin{itemize}
\item[(i)] $H \in {\mathcal E}^{\textnormal{ran}}_E(X)$ for some $E$-range operator self-similar random field $X$;
\item[(ii)] every eigenvalue of $H$ has nonnegative real part, and every eigenvalue with null real part is a simple root of the minimal polynomial of $H$.
\end{itemize}
\end{lem}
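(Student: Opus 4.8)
The plan is to prove the two implications separately; (i) $\Rightarrow$ (ii) carries the real content, while (ii) $\Rightarrow$ (i) is a short explicit construction. For (i) $\Rightarrow$ (ii), suppose $H\in{\mathcal E}^{\textnormal{ran}}_E(X)$. Since every eigenvalue of $E$ has positive real part, Proposition~\ref{p:minRe(E)=0<=>minRe(H)=0}(i) immediately rules out eigenvalues of $H$ with negative real part, so all eigenvalues of $H$ have nonnegative real part and only the statement about simple roots remains. I would invoke Lemma~\ref{l:HM_lemma6.3_analog} to decompose $\bbR^n=V_1\oplus V_2$, $X=X_1+X_2$, $H=H_1\oplus H_2$, with the eigenvalues of $H_1$ purely imaginary and those of $H_2$ of positive real part, and note -- by the same projection argument as in the proof of that lemma, applied to the projection onto $V_1$ -- that $X_1$ is a proper, stochastically continuous $(E,H_1)$-o.s.s.\ random field on $V_1$. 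The simple-root claim for $H$ is then exactly the assertion that $H_1$ is semisimple (diagonalizable over $\bbC$), so this is what must be proved.

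To show $H_1$ is semisimple I would argue by contradiction. Writing $H_1=S+N$ with $S$ its semisimple part and $N\neq0$ its nilpotent part, the norms $\|r^S\|$ and $\|r^{-S}\|$ stay bounded for $r\in(0,\infty)$ because the eigenvalues of $S$ are purely imaginary, so $\|r^{H_1}\|\ge\|r^N\|/\sup_{s>0}\|s^{-S}\|\to\infty$ as $r\to0^+$. Fix any $\theta\neq0$; the scaling relation gives $X_1(r^E\theta)\stackrel{d}=r^{H_1}X_1(\theta)$ for all $r>0$, while $r^E\theta\to0$ as $r\to0^+$ together with stochastic continuity forces $X_1(r^E\theta)\to X_1(0)$ in probability, hence $\{r_k^{H_1}X_1(\theta)\}$ is tight along any sequence $r_k\to0^+$. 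This contradicts the elementary fact -- which I would establish in a couple of lines, imitating the first step of the proof of Lemma~\ref{e:A^(-1)w_k->0} -- that if $A_k\in GL(d,\bbR)$ with $\|A_k\|\to\infty$ and $Z$ is a full random vector, then $\{A_kZ\}$ is not tight: choose unit $v_k$ with $\|A_kv_k\|\to\infty$, set $u_k=A_kv_k/\|A_kv_k\|$, observe $\|A_k^*u_k\|\ge\|A_kv_k\|\to\infty$, pass to a subsequence with $A_k^*u_k/\|A_k^*u_k\|\to e_0$, and note that $u_k^*A_kZ=\|A_k^*u_k\|\,(A_k^*u_k/\|A_k^*u_k\|)^*Z$ is non-tight since $e_0^*Z$ is nondegenerate by fullness of $Z$, whence $\{A_kZ\}$ is non-tight because $|u_k^*A_kZ|\le\|A_kZ\|$. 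Applying this with $A_k=r_k^{H_1}$ and $Z=X_1(\theta)$ (full by properness of $X_1$, since $\theta\neq0$) gives the contradiction, so $H_1$ is semisimple and (ii) holds.

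For (ii) $\Rightarrow$ (i), I would build $X$ on $\bbR^m$ for the given $E$. By (ii) one may write $H=H_1\oplus H_2$ along $\bbR^n=V_1\oplus V_2$ with $H_1$ semisimple and purely imaginary, $H_2$ of positive real part. Let $\Xi_1$ be the Gaussian vector on $V_1$ furnished by Lemma~\ref{l:invariant_law_Re=0}, so $r^{H_1}\Xi_1\stackrel{d}=\Xi_1$ for all $r>0$, let $\Xi_2\sim N(0,I)$ on $V_2$ be independent of $\Xi_1$, and, using the polar coordinates~\eqref{e:x=tau(x)E*l(x)}, put $X_1\equiv\Xi_1$, $X_2(x)=\tau_E(x)^{H_2}\Xi_2$ for $x\neq0$ and $X_2(0)=0$, and $X=X_1+X_2$. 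Then I would check: $X_2$ is stochastically continuous because $\tau_E(x)\to0$ as $x\to0$ and the eigenvalues of $H_2$ have positive real part; $X$ is proper as a product of full laws; $X$ is nondegenerate because for any nontrivial projection $P$ one can pick $0\neq t_0\in\textnormal{Ker}\,P$ and then $X(Pt_0)=(\Xi_1,0)\not\simeq(\Xi_1,\tau_E(t_0)^{H_2}\Xi_2)=X(t_0)$; and $X(c^Et)\simeq c^HX(t)$ for all $c>0$, using $\tau_E(c^Ex)=c\,\tau_E(x)$ from~\eqref{e:tau_E}, the identity $c^{H_1}\Xi_1\stackrel{d}=\Xi_1$, and independence of the two components. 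This exhibits an $E$-range o.s.s.\ field with $H\in{\mathcal E}^{\textnormal{ran}}_E(X)$.

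I expect the main obstacle to be the tightness argument in (i) $\Rightarrow$ (ii): one must identify the direction along which the unbounded family $\{r_k^{H_1}\}$ forces escape to infinity, and -- crucially -- extract the contradiction at a point $\theta\neq0$, where $X_1(\theta)$ is full, rather than at $0$, where $X_1(0)$ need not be full. Everything else is bookkeeping: Lemma~\ref{l:HM_lemma6.3_analog} and the fact that $c^Et\to0$ as $c\to0^+$ handle the reduction in the first direction, while Lemma~\ref{l:invariant_law_Re=0} and the polar-coordinate representation make the converse construction routine.
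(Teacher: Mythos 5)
Your proposal is correct, and while your (ii) $\Rightarrow$ (i) construction is essentially the paper's (polar coordinates \eqref{e:x=tau(x)E*l(x)}, an invariant Gaussian vector on $V_1$ from Lemma \ref{l:invariant_law_Re=0}, an independent full vector on $V_2$ scaled by $\tau_E(t)^{H_2}$ -- you are in fact a bit more explicit than the paper about nondegeneracy and continuity at the origin), your (i) $\Rightarrow$ (ii) argument takes a genuinely different route. The paper works directly on the Jordan block of a non-simple purely imaginary eigenvalue: for $t_k\to 0$ it writes $t_k=\tau_E(t_k)^E l_E(t_k)$, reads off the first two coordinates of $(\tau_E(t_k)^{-1})^{J_H}Y(t_k)\stackrel{d}{=}Y(l_E(t_k))$, and lets the divergent off-diagonal term $-\log\tau_E(t_k)$ force $Y_1(t_k)\stackrel{P}{\rightarrow}0$, contradicting properness of $Y(l_0)$. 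You instead first reduce to the purely imaginary block $H_1$ via the minimal-polynomial splitting -- your $\pi_1$-analogue of Lemma \ref{l:HM_lemma6.3_analog} is legitimate even though the paper only states the $\pi_2$ case, since $\pi_1 c^H=c^{H_1}\pi_1$ and the projection of a full vector onto $V_1$ along $V_2$ is full -- and then derive the contradiction abstractly: a nonzero nilpotent part makes $\|r^{H_1}\|\to\infty$ as $r\to 0^+$, while stochastic continuity at $0$ makes $\{r^{H_1}X_1(\theta)\}$ tight (it converges in law to $X_1(0)$), and an operator sequence with exploding norms applied to a full vector cannot be tight. Your proof of that last fact (via $A_k^*u_k$ and the limiting direction $e_0$) is sound, and it is the same convergence-of-types device the paper uses for compactness in Proposition \ref{p:Gdom_Gran_are_compact_groups}. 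Both contradictions ultimately exploit the logarithmic blow-up of the unipotent factor; the paper's version is hands-on with Jordan coordinates, yours is more modular (no explicit block entries, no polar decomposition of a sequence $t_k$) at the cost of proving the non-tightness lemma and the reduction to $V_1$, and you correctly place the contradiction at a point $\theta\neq 0$ where fullness is available, which is the one spot where the argument could have failed.
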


\begin{proof} The proof is a direct extension of the argument in Hudson and Mason \cite{hudson:mason:1982}, Theorem 3.
First we show that ($i$) implies ($ii$). Since we assume that every eigenvalue of $E$ has positive real part, then by Proposition \ref{p:minRe(E)=0<=>minRe(H)=0}, ($i$), it follows that every eigenvalue of $H$ has nonnegative real part. Then we just need to show that every eigenvalue of $H$ having null real part is a simple root of the minimal polynomial of $H$. For $m =1$, our proof is mathematically equivalent to Theorem 3 in Hudson and Mason \cite{hudson:mason:1982}, although we substantially simplify the technical phrasing of the argument.

Suppose by contradiction that some eigenvalue $h=ib$ of $H$ is not a simple root of the minimal polynomial of $H = PJ_HP^{-1}$. Then, in the Jordan decomposition of $H$ there is a non-diagonal $j \times j$ Jordan block $J_{H,h}$ associated with $h$. We can assume that $J_{H,h}$ corresponds to the upper left $j \times j$ block. Let $\{t_k\}_{k \in \bbN} \subseteq \bbR^m$ be a sequence such that $t_k \rightarrow 0$. Consider the polar decomposition $t_k = \tau_E(t_k)^{E}l_E(t_k)$. By the compactness of $S_0$, we can assume, without loss of generality, that $l_E(t_k) \rightarrow l_0 \in S_0$ as $k \rightarrow \infty$. By operator self-similarity, $(\tau_E(t_k)^{-1})^{H} X(t_k) \stackrel{d}= X(l_E(t_k))$. Therefore,
$$
(\tau_E(t_k)^{-1})^{J_H} P^{-1} X(t_k) \stackrel{d}= P^{-1} X(l_E(t_k)).
$$
Let $Y(t_k) = P^{-1} X(t_k) \in \bbC^n$, $Y(l_E(t_k))= P^{-1} X(l_E(t_k)) \in \bbC^n$, and let $\pi_{\leq j}$ be the projection operator on the first $j$ entries of a vector in $\bbC^n$. Then,
\begin{equation}\label{e:scaling_law_under_projection}
\pi_{\leq j} [ (\tau_E(t_k)^{-1})^{J_H} Y(t_k) ] \stackrel{d}= \pi_{\leq j} [ Y(l_E(t_k)) ] ,
\end{equation}
where, by continuity in probability, $\pi_{\leq j} [ Y(l_E(t_k)) ] \stackrel{P}\rightarrow \pi_{\leq j} [Y(l_0) ]$ as $k \rightarrow \infty$. Moreover, by the expression for the matrix exponential (see, for instance, Didier and Pipiras \cite{didier:pipiras:2011}, p.\ 31, expression (D2)),
$$
\pi_{\leq j} [ (\tau_E(t_k)^{-1})^{J_H} Y(t_k) ]
= \left(\begin{array}{ccccc}
1 & 0 & 0 & \hdots & 0 \\
\log \tau_E(t_k)^{-1} & 1 & 0 & \hdots & 0\\
\frac{\log^2 \tau_E(t_k)^{-1}}{2!} & \log \tau_E(t_k)^{-1} & 1 & \hdots & 0\\
\vdots & \ddots & \ddots & \hdots & 0\\
\frac{\log^{j-1} \tau_E(t_k)^{-1}}{(j-1)!} & \frac{\log^{j-2} \tau_E(t_k)^{-1}}{(j-2)!} & \hdots & \log \tau_E(t_k)^{-1}  & 1\\
\end{array}\right)
\left(\begin{array}{c}
Y_1(t_k)\\
Y_2(t_k)\\
\vdots \\
Y_{j-1}(t_k)\\
\end{array}\right).
$$
Looking at the first two entries of \eqref{e:scaling_law_under_projection}, we arrive at the system
\begin{equation}\label{e:bivariate_with_logs_contra}
\left(\begin{array}{c}
Y_{1}(t_k)\\
-\log \tau_E(t_k) \hspace{1mm}Y_{1}(t_k) + Y_{2}(t_k)
\end{array}\right)
\stackrel{d}= \left(\begin{array}{c}
Y_{1}(l_E(t_k))\\
Y_{2}(l_E(t_k))
\end{array}\right).
\end{equation}
Since the term $-\log \tau_E(t_k)\to\infty$ as $k\to\infty$ and $Y(l_E(t_k)) \stackrel{P}\rightarrow Y(l_0)$, we have $Y_{1}(t_k) \stackrel{P}\rightarrow 0$. In view of the first entry of the relation \eqref{e:bivariate_with_logs_contra}, this contradicts the properness of $Y(l_0)$.

The proof that ($ii$) implies ($i$) is by construction. Write the direct sum decomposition $\bbR^n = V_1 \oplus V_2$ as in Lemma \ref{l:HM_lemma6.3_analog}. Write $H=H_1\oplus H_2$ with respect to this direct sum decomposition, so that $H_1$ is semisimple (diagonalizable over ${\mathbb C}$). Since every eigenvalue of $H_1$ has zero real part, the closure of the family $\overline{\{r^{H_1}: r > 0\}}$ in the operator topology is a compact group of linear operators on $V_1$.

Now let $X_1$, $X_2$ be two independent random vectors which are full and take values in $V_1$ and $V_2$, respectively. By Lemma \ref{l:invariant_law_Re=0}, we can further assume that the distribution of $X_1$ is invariant under the group
\begin{equation}\label{e:group_r^H1:r>0_closure}
\overline{\{r^{H_1}: r > 0\}}.
\end{equation}
Since the eigenvalues of $E$ have positive real parts, we can define the random field $X = \{X(t)\}_{t \in \bbR^m}$ by
$$
X(t) = X( \tau_E(t)^E l_E(t)) := \tau_E(t)^H (X_1 + X_2), \quad t \in \bbR^m \backslash\{0\}, \quad X(1) = X_1.
$$
In particular, $X( \theta ) = X_1 + X_2$, $\theta \in S_0$, and
\begin{equation}\label{e:scaling_in_V1_V2}
X(t) = \tau_E(t)^{H_1} X_1 + \tau_E(t)^{H_2}X_2 \stackrel{d}= X_1 + \tau_E(t)^{H_2}X_2, \quad t \neq 0,
\end{equation}
where the second equality in law follows from the invariance of the distribution of $X_1$ under the group \eqref{e:group_r^H1:r>0_closure} and the independence between $X_1$ and $X_2$. The random field $X$ is proper, satisfies the scaling relation $X(c^Et) \simeq c^H X(t)$, $c > 0$, and is continuous in probability at every $t \in \bbR^m \backslash\{0\}$. Now take a sequence $\{t_k\}_{k \in \bbN} \subseteq \bbR^m \backslash\{0\}$, $t_k \rightarrow 0$, $k \in \bbN$. Then, by \eqref{e:scaling_in_V1_V2} at $t_k$ and the fact that the eigenvalues of $H_2$ have positive real parts, $X(t_k) \stackrel{d}\rightarrow X_1$ as $k \rightarrow \infty$, i.e., $X$ is continuous in law at every $t \in \bbR^m$. $\Box$
\end{proof}

\begin{lem}\label{l:HM_lemma6.2_analog}
Assume that $X$ is a proper, stochastically continuous random vector field that satisfies the scaling relation \eqref{e:o.s.s.} for some $E$ whose eigenvalues all have positive real part.
Then, every eigenvalue of $H$ has real part equal to zero if and only if $X(0)$ is full.
\end{lem}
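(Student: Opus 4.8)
The plan is to prove the two implications of the equivalence separately, using the $E$-polar coordinates of \eqref{e:x=tau(x)E*l(x)} for the direction ``all eigenvalues of $H$ have zero real part $\Rightarrow$ $X(0)$ is full'', and the block decomposition of Lemma~\ref{l:HM_lemma6.3_analog} for the converse.

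First I would prove that if every eigenvalue of $H$ has zero real part, then $X(0)$ is full. Since $X$ is proper and stochastically continuous and the eigenvalues of $E$ have positive real part, the argument establishing $(i)\Rightarrow(ii)$ in the proof of Lemma~\ref{l:admissibility_of_H} applies verbatim---it uses neither nondegeneracy nor the $E$-range o.s.s.\ hypothesis, only properness and the scaling relation---and shows that every eigenvalue of $H$ with zero real part is a simple root of the minimal polynomial of $H$. Under the present hypothesis this forces $H$ to be semisimple, and then, exactly as in the proof of Lemma~\ref{l:admissibility_of_H}, $K:=\overline{\{c^{H}:c>0\}}$ is a compact subgroup of $GL(n,\bbR)$. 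Now fix any sequence $t_k\to0$ in $\bbR^m\setminus\{0\}$ and write $t_k=\tau_E(t_k)^{E}l_E(t_k)$ as in \eqref{e:x=tau(x)E*l(x)}. By compactness of $S_0$ and of $K$, after passing to a subsequence we have $l_E(t_k)\to l_0\in S_0$ and $\tau_E(t_k)^{H}\to O\in K$, so that $O$ is invertible. Evaluating \eqref{e:o.s.s.} at the single point $t=l_E(t_k)$ with $c=\tau_E(t_k)$ gives $X(t_k)\stackrel{d}{=}\tau_E(t_k)^{H}X(l_E(t_k))$. By stochastic continuity $X(t_k)\stackrel{P}{\to}X(0)$ and $X(l_E(t_k))\stackrel{P}{\to}X(l_0)$, hence $\tau_E(t_k)^{H}X(l_E(t_k))\stackrel{P}{\to}OX(l_0)$; taking distributional limits on both sides of the equality in law yields $X(0)\stackrel{d}{=}OX(l_0)$. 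Since $l_0\neq0$, $X(l_0)$ is full by properness of $X$, and since $O$ is invertible, $X(0)$ is full.

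For the converse I would argue by contraposition. Suppose some eigenvalue of $H$ has nonzero real part. By Proposition~\ref{p:minRe(E)=0<=>minRe(H)=0}(i) no eigenvalue of $H$ can have negative real part (all eigenvalues of $E$ have positive real part), so in fact some eigenvalue of $H$ has \emph{positive} real part. Factor the minimal polynomial of $H$ as $f=f_1f_2$ with $f_1$ collecting the roots of zero real part and $f_2$ those of positive real part, as in Lemma~\ref{l:HM_lemma6.3_analog}; then $f_2$ is nonconstant, so $V_2:=\textnormal{Ker}\,f_2(H)\neq\{0\}$ and $V_1:=\textnormal{Ker}\,f_1(H)$ is a proper subspace of $\bbR^n$. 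With respect to $\bbR^n=V_1\oplus V_2$ write $X=X_1+X_2$ and $H=H_1\oplus H_2$; by Lemma~\ref{l:HM_lemma6.3_analog}, $X_2=\pi_2X$ is a proper, stochastically continuous, $(E,H_2)$-o.s.s.\ field on $V_2$ with every eigenvalue of $H_2$ having positive real part. Since the eigenvalues of $E$ and of $H_2$ all have positive real part, letting $c\to0^+$ in $X_2(c^{E}t)\simeq c^{H_2}X_2(t)$ the left side tends to $X_2(0)$ in probability while $\|c^{H_2}\|\to0$, so $X_2(0)=0$ a.s. Hence $X(0)=X_1(0)\in V_1$ a.s., and since $V_1$ is a proper subspace (thus contained in a hyperplane), $X(0)$ is not full.

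The main obstacle is the first implication: because $H$ has purely imaginary eigenvalues, $\tau_E(t_k)^{H}$ does not converge as $t_k\to0$, so one must first know that $H$ is semisimple---which is precisely where properness, via the argument of Lemma~\ref{l:admissibility_of_H}, enters---in order to extract a convergent subsequence of $\tau_E(t_k)^{H}$ inside the compact group $K$, and then synchronize it with convergent subsequences of $l_E(t_k)$ and of $X(t_k)$. The converse implication is essentially bookkeeping once Lemma~\ref{l:HM_lemma6.3_analog} and Proposition~\ref{p:minRe(E)=0<=>minRe(H)=0}(i) are in hand.
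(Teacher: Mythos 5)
Your proof is correct, and it overlaps with the paper's argument in one direction while genuinely diverging in the other. For ``zero real parts $\Rightarrow X(0)$ full'' you use exactly the paper's key input --- the simple-root property from the (i)$\Rightarrow$(ii) part of Lemma~\ref{l:admissibility_of_H} (which, as you rightly note, needs only properness, stochastic continuity and the scaling relation), hence compactness of $\overline{\{c^H:c>0\}}$ --- and then pass to a limit to exhibit $X(0)$ as an invertible operator applied to a full vector; the paper does the same by contradiction, fixing $t\neq 0$ and letting $c_k\to 0^+$ with $c_k^H\to A\in GL(n,\bbR)$, so your polar-coordinate detour through $t_k\to 0$, $\tau_E(t_k)^H\to O$ is valid but superfluous (fixing a single $t_0\neq 0$ suffices). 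For the converse the routes differ: the paper works with the adjoint, choosing $v\neq 0$ with $c^{H^*}v\to 0$ and showing $v^*X(0)\stackrel{d}{=}\lim_{c\to 0^+} v^*c^{H}X(t)=0$, contradicting fullness, whereas you argue by contraposition, invoke Proposition~\ref{p:minRe(E)=0<=>minRe(H)=0}(i) to exclude negative real parts, and use the $V_1\oplus V_2$ decomposition of Lemma~\ref{l:HM_lemma6.3_analog} together with $\|c^{H_2}\|\to 0$ to get $X_2(0)=0$ a.s., so that $X(0)$ is supported on the proper subspace $V_1$ and cannot be full. The paper's dual-vector argument is more economical (no decomposition needed); yours costs an appeal to Lemma~\ref{l:HM_lemma6.3_analog} but anticipates and reuses exactly the computation that appears later in the proof of Theorem~\ref{t:X=X1+X2}, and it involves no circularity since Lemmas~\ref{l:HM_lemma6.3_analog} and \ref{l:admissibility_of_H} and the Proposition all precede this lemma.
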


\begin{proof}
Assume that the distribution of $X(0)$ is full, and suppose by contradiction that some eigenvalue of $H$, and thus of $H^*$, has real part different from zero. Then, by Lemma \ref{l:admissibility_of_H}--(ii), such an eigenvalue has positive real part. As a consequence, there is $v \in \bbR^n \backslash \{0\}$ such that $\lim_{c \rightarrow 0^+}c^{H^*}v = 0$. Therefore, for $t \neq 0$, and by the assumption that $\min \Re (\textnormal{eig}(E)) > 0$,
$$
v^* X(0)  \stackrel{d}\leftarrow v^* X(c^E t) \stackrel{d}= v^* c^{H}X(t) \rightarrow 0^*X(t) = 0, \quad c \rightarrow 0^+.
$$
This contradicts the properness of $X(0)$.

Conversely, assume every eigenvalue of $H$ has real part equal to zero, and suppose by contradiction that the distribution of $X(0)$ is not full. Then, there is $v \neq 0$ such that $v^*X(0) = 0$. But by Lemma \ref{l:admissibility_of_H} the eigenvalues of $H$ are simple roots of the minimal polynomial of $H$. Therefore, $\overline{\{c^{H}: c > 0\}}$ has a compact closure in $GL(n,\bbR)$, whence one can pick a sequence $\{c_k\}$ such that $c_k \rightarrow 0^+$ and $c^{H}_k \rightarrow A \in GL(n,\bbR)$. Thus, since every eiganvalue of $E$ has positive real part,
$$
X(0) \stackrel{P}\leftarrow X(c^E_{k}t) \stackrel{d}= c^{H}_k X(t) \stackrel{d}\rightarrow A X(t).
$$
We arrive at $0 = v^* X(0) \stackrel{d}= v^* AX(t)$, which contradicts the properness of $X$. $\Box$\\
\end{proof}

\begin{lem}\label{l:HM_lemma6.1_analog}
Assume that $X$ is a proper, stochastically continuous random vector field that satisfies the scaling relation \eqref{e:o.s.s.} for some $E$ whose eigenvalues all have positive real part.  If $X(0)$ is full, there is a version of $X$ with constant sample paths.
\end{lem}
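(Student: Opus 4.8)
The plan is to first determine the algebraic structure of $H$ forced by the hypothesis that $X(0)$ is full, then exploit the compactness of the one-parameter group generated by $H$ together with stochastic continuity to collapse the finite-dimensional distributions onto the diagonal, and finally read off a version with constant paths.

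First I would apply Lemma \ref{l:HM_lemma6.2_analog}: since $X(0)$ is full, every eigenvalue of $H$ has zero real part, and then Lemma \ref{l:admissibility_of_H}(ii) shows each such eigenvalue is a simple root of the minimal polynomial of $H$. Hence $H$ is semisimple with purely imaginary spectrum, so (as already observed in the proof of Lemma \ref{l:HM_lemma6.2_analog}) the closure $\overline{\{c^{H}:c>0\}}$ is a compact subgroup of $GL(n,\bbR)$; in particular $\{c^{H}:0<c\le 1\}$ is relatively compact.

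Next, for a fixed $t\in\bbR^m$ and an arbitrary sequence $c_j\to 0^+$, I would pass (by relative compactness) to a subsequence, not relabeled, along which $c_j^{H}\to A$ for some $A\in GL(n,\bbR)$. Applying \eqref{e:o.s.s.} to the two indices $0$ and $t$ yields
\[
\big(X(0),\,X(c_j^{E}t)\big)=\big(X(c_j^{E}\cdot 0),\,X(c_j^{E}t)\big)\stackrel{d}{=}\big(c_j^{H}X(0),\,c_j^{H}X(t)\big).
\]
Since every eigenvalue of $E$ has positive real part, $c_j^{E}t\to 0$, so stochastic continuity gives $X(c_j^{E}t)\stackrel{P}{\to}X(0)$ and the left-hand vector converges in probability to $(X(0),X(0))$; the right-hand vector converges in distribution to $(AX(0),AX(t))$ because $c_j^{H}\to A$ deterministically. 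Hence $(AX(0),AX(t))\stackrel{d}{=}(X(0),X(0))$, a random vector supported on the diagonal $\{(x,x):x\in\bbR^n\}$; invertibility of $A$ then forces $X(t)=X(0)$ a.s.

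Finally I would define $\widetilde X(t,\omega):=X(0,\omega)$ for every $t\in\bbR^m$. By the preceding step $\widetilde X(t)=X(t)$ a.s.\ for each $t$, so $\widetilde X$ is a version of $X$, and every sample path of $\widetilde X$ is constant by construction. I do not expect a genuine obstacle here: the real work has already been done in Lemmas \ref{l:HM_lemma6.2_analog} and \ref{l:admissibility_of_H}, and the only point requiring a little care is the extraction of the convergent subsequence of $\{c_j^{H}\}$, which is legitimate precisely because the simple-root conclusion of Lemma \ref{l:admissibility_of_H} makes $\overline{\{c^{H}:c>0\}}$ compact.
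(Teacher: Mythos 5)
Your proposal is correct and takes essentially the same route as the paper: Lemmas \ref{l:HM_lemma6.2_analog} and \ref{l:admissibility_of_H} give that $H$ is semisimple with purely imaginary spectrum, so $\overline{\{c^{H}:c>0\}}$ is a compact subgroup of $GL(n,\bbR)$, and a convergent subsequence $c_k^{H}\to A\in GL(n,\bbR)$ combined with \eqref{e:o.s.s.} and stochastic continuity pins down the limiting law. The only difference is cosmetic: you deduce $X(t)=X(0)$ a.s.\ for each fixed $t$ directly from the diagonal support of $(AX(0),AX(t))\stackrel{d}{=}(X(0),X(0))$, whereas the paper first establishes $\{X(t)\}_{t\in\bbR^m}\simeq\{X(0)\}_{t\in\bbR^m}$ and then passes through a countable rational grid and stochastic continuity to identify the constant version $Z(t)=X(0)$; both arguments are valid.
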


\begin{proof}
Under the assumptions, by Lemma \ref{l:HM_lemma6.2_analog}, every eigenvalue of $H$ has zero real part. So, by Lemma \ref{l:admissibility_of_H}, the eigenvalues of $H$ are simple roots of the minimal polynomial of $H$. Consequently, $H$ is diagonalizable over ${\mathbb C}$ with all roots having zero real parts. The group $\overline{\{c^H: c > 0\}}$, where the closure is taken in $GL(n,\bbR)$, is then compact. We can pick a sequence $\{c_{k}\}_{k \in \bbN}$, such that $c_k \rightarrow 0^+$ and $c^{H}_k \rightarrow A$ for some $A \in GL(n,\bbR)$. Then, for an arbitrary $q$-tuple $t_1,\hdots,t_q \in \bbR^m$, $q \in \bbN$,
$$
(X(0),\hdots,X(0)) \stackrel{P}\leftarrow (X(c^{E}_k t_1),\hdots,X(c^{E}_k t_q))
 $$
 $$
 \stackrel{d}= (c^{H}_{k}X(t_1),\hdots, c^{H}_{k}X(t_q)) \stackrel{P}\rightarrow (AX(t_1),\hdots, AX(t_q)).
$$
In particular, $A^{-1}X(0) \stackrel{d}= X(t) \stackrel{P}\rightarrow X(0)$, as $t \rightarrow 0$. Thus, $\{X(t)\}_{t \in \bbR^m} \simeq \{X(0)\}_{t \in \bbR^m}$. Now let $Z(t) = X(0)$, $t \in \bbR^m$. Then,
\begin{equation}\label{e:Z=X}
Z = \{Z(t)\}_{t \in \bbR^m} \simeq \{X(t)\}_{t \in \bbR^m}
\end{equation}
 and $Z$ has constant sample paths. Consider $\bbQ^n$ and define the set of functions (sample paths)
$$
{\mathcal D} = \bigcap_{s \in \bbQ^n} \{f : \bbR^m \rightarrow \bbR^n: f(s) = f(0)\}.
$$
Then, $P(\{X(t)\} \in {\mathcal D}) = P(\{Z(t)\} \in {\mathcal D}) = 1$, by \eqref{e:Z=X}. In particular, for $t_0 \in \bbQ^n$, $P(X(t_0) = X(0) = Z(t_0)) = 1$. For $t'_0 \notin \bbQ^n$, consider a sequence $\{t_k\} \subseteq \bbQ^n$ such that $t_k \rightarrow t'_0$. Then, $Z(t'_0)  = X(0) = X(t_k) \rightarrow X(t'_0)$ in probability, where the equalities hold a.s.\ and the limit is a consequence of continuity in probability. Therefore, $Z(t'_0) = X(t'_0)$ a.s., i.e., $P(Z(t) = X(t)) = 1$, $t \in \bbR^m$, as claimed. $\Box$\\
\end{proof}

{\sc Proof of Theorem \ref{t:X=X1+X2}}: The proof is akin to Theorem 4 in Hudson and Mason \cite{hudson:mason:1982}. We provide the details for the reader's convenience.
Recall the decomposition \eqref{e:f=gh} of the minimal polynomial of $H$, where the roots of $f_1$ have zero real parts. Let $\pi_2$ be the projection operator onto $V_2$ defined by the direct sum decomposition. By Lemma \ref{l:HM_lemma6.3_analog}, the restriction $\{\pi_2 X(t)\}$ is ($E,H_2$)--o.s.s.\ on $V_2$. Since every eigenvalue of $H_1$ has real part zero, it follows from Lemma \ref{l:HM_lemma6.2_analog} that $\pi_1 X(0)$ is full in $V_1$. Hence, by Lemma \ref{l:HM_lemma6.1_analog}, there is a version $\{X_1(t)\}$ of $\{\pi_1 X(t)\}$ with constant sample paths. Moreover, every eigenvalue of $E$ has positive real part, and every eigenvalue of $H_2$ has positive real part,
$$
\pi_2 X(0) \stackrel{P}\leftarrow \pi_2 X (c^{E}t) \stackrel{d}= \pi_2 c^{H}X(t) \stackrel{d}\rightarrow 0, \quad c \rightarrow 0^+.
$$
This establishes ($i$) and ($ii$). $\Box$

\bibliography{ofbf}

\small

\bigskip

\noindent \begin{tabular}{lll}
Gustavo Didier & Mark M.\ Meerschaert & Vladas Pipiras \\
Mathematics Department &  Dept.\ of Statistics and Probability & Dept.\ of Statistics and Operations Research \\
Tulane University & Michigan State University & UNC at Chapel Hill \\
6823 St.\ Charles Avenue  & 619 Red Cedar Road & CB\#3260, Hanes Hall \\
New Orleans, LA 70118, USA & East Lansing, MI 48824, USA & Chapel Hill, NC 27599, USA \\
{\it gdidier@tulane.edu}& {\it mcubed@stt.msu.edu}   & {\it pipiras@email.unc.edu} \\
\end{tabular}\\

\smallskip

\end{document}